\newcommand{\cO}{\mathcal{O}}
\title[Rational connectedness of globally $F$-regular threefolds]
{On rational connectedness of globally $F$-regular threefolds}
\author[Gongyo]{Yoshinori Gongyo}
\author[Li]{Zhiyuan Li}
\author[Patakfalvi]{Zsolt Patakfalvi}
\author[Schwede]{Karl Schwede}
\author[Tanaka]{Hiromu Tanaka}
\author[Zong]{Hong Runhong Zong}
\subjclass[2012]{14E30, 13A35.}
\keywords{globally F-regular, positive characteristic, rational curve, minimal model program}
\date{\today, version 0.03}
\address{Graduate School of Mathematical Sciences, the University of Tokyo, 3-8-1 Komaba, Meguro-ku, Tokyo 153-8914, Japan.}
\email{gongyo@ms.u-tokyo.ac.jp}
\address{Department of Mathematics, Imperial College London, 180 Queen's Gate, London SW7 2AZ, UK.}
 \email{y.gongyo@imperial.ac.uk}
\address{Room 382-B, Department of Mathematics, Building 380, Stanford, CA 94305. }
\email{zli2@stanford.edu}
\address{Princeton University, Department of Mathematics, Phone: (206) 436-9676, Fine Hall, Washington Road, Princeton, NJ 08544-1000.}
\email{pzs@princeton.edu}
\address{ Department of Mathematics, University of Utah, Salt Lake City, UT, USA, 84112.}
\email{schwede@math.utah.edu}
\address{Department of Mathematics, Faculty of Science, Kyoto University, Kyoto 606-8502 Japan.}
\email{tanakahi@math.kyoto-u.ac.jp}
\address{Room A5 Fine Hall, Washington Road, Department of Mathematics, Princeton University, Princeton NJ 08544 US.}
\email{rzong@math.princeton.edu}
 \thanks{The first author was partially supported by the Grand-in-Aid for Research Activity Start-Up $\sharp$24840009 from JSPS and Research expense from the JRF fund.  The fourth author was partially supported by the NSF grant DMS \#1064485, NSF FRG grant DMS \#1265261 NSF CAREER grant DMS \#1252860 and by a Sloan Fellowship.
The fifth author is partially supported by
the Research Fellowships of the JSPS for Young Scientists (24-1937). }
\newcommand{\Spec}[0]{{\operatorname{Spec}}}
 \newcommand{\Hom}[0]{{\operatorname{Hom}}}
\newcommand{\Supp}[0]{{\operatorname{Supp}}}
 \newcommand{\Ex}[0]{{\operatorname{Ex}}}
\newtheorem{thm}{Theorem}[section]
\newtheorem{lem}[thm]{Lemma}
 \newtheorem{cor}[thm]{Corollary}
\newtheorem{prop}[thm]{Proposition}
\newtheorem{cl}[thm]{Claim}
\theoremstyle{definition}
\newtheorem{dfn}[thm]{Definition}
\newtheorem{rem}[thm]{Remark}
\newtheorem*{ack}{Acknowledgments}
\newtheorem{nota}[thm]{Notation}
\newtheorem{step}{Step}
\newtheorem{case}{Case}
\newtheorem{nasi}[thm]{}
\newtheorem{claim}[equation]{Claim}
 \newcommand{\Q}{$\mathbb{Q}$}
\newcommand{\sHom}{\mathscr{H}om}
\newcommand{\sR}{\mathscr{R}}
\newcommand{\blank}{\ \ }
\begin{document}

\maketitle

\begin{abstract}
In this paper,
we show that
projective globally $F$-regular threefolds,
defined over an algebraically closed field of characteristic $p\geq 11$,
are rationally chain connected.

\end{abstract}

\tableofcontents

\setcounter{section}{-1}

\section{Introduction}
\numberwithin{equation}{thm}

In the 90's, Campana and Koll\'ar--Miyaoka--Mori showed that
smooth Fano varieties are rationally connected in characteristic zero (\cite{KMM}, \cite{campana}).
Recently Zhang and Hacon--$\mathrm{M^{c}}$Kernan generalized this to log Fano varieties in characteristic zero (\cite{Zhang}, \cite{HM}).
Then, it is natural to ask the following question:
are smooth Fano varieties in positive characteristic rationally connected?
This is still an open problem, but it is known
that they are rationally chain connected (cf. \cite[Ch V, 2.14]{Kollar2}).
In this paper, we consider a related problem on this result.
The main theorem of this paper is as follows.

\begin{thm}[Section \ref{section-main}]
\label{0gfr-rcc}
Let $k$ be an algebraically closed field of characteristic $p\geq 11$.
Let $X$ be a projective globally $F$-regular threefold over $k$.
Then, $X$ is rationally chain connected.
\end{thm}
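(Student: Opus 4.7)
The plan is to exploit the fact, standard for projective globally $F$-regular varieties, that $X$ is of Fano type: there exists an effective $\mathbb{Q}$-divisor $\Delta$ on $X$ such that $(X,\Delta)$ is klt and $-(K_X+\Delta)$ is ample. This places us in the positive characteristic analog of the log Fano setting treated in characteristic zero by Zhang and Hacon--McKernan, and suggests attacking the problem via the three-dimensional log MMP together with an induction on the dimension of the base of a Mori fiber space.

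Concretely, I would first run a $(K_X+\Delta)$-MMP; for threefolds in characteristic $p\geq 5$ this terminates (Hacon--Xu, Birkar--Waldron), and since $-(K_X+\Delta)$ is ample the output is a Mori fiber space $f\colon X'\to Z$ with $\dim Z\in\{0,1,2\}$. Because rational chain connectedness is a birational invariant of proper varieties, it suffices to prove that $X'$ is RCC. The remainder of the argument splits into cases according to $\dim Z$, where both the base and the general fibers of $f$ inherit log Fano type structures in strictly smaller dimension.

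If $\dim Z=2$, then $Z$ is a normal projective surface of Fano type, hence rational by surface MMP (valid in arbitrary characteristic), and the fibers of $f$ are rational curves; concatenating a chain of rational curves in $Z$ with fibers of $f$ produces a chain between any two points of $X'$. If $\dim Z=1$, then $Z\cong\mathbb{P}^1$ and the general fibers are klt del Pezzo surfaces of Fano type, which are rational over the algebraic closure by surface classification, so the same concatenation yields RCC. If $\dim Z=0$, then $X'$ itself is a $\mathbb{Q}$-factorial klt Fano threefold, and one must show directly that such a threefold is rationally chain connected.

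The main obstacle is precisely the case $\dim Z=0$. In characteristic zero this is the classical Koll\'ar--Miyaoka--Mori and Campana theorem, proved via bend-and-break applied to free rational curves produced from positivity of $-K_X$; in positive characteristic, deformation theory of rational curves is more delicate and one cannot directly invoke generic smoothness. The natural strategy is to produce rational curves through general points by pulling curves back under iterates of Frobenius (so that $H^1$ vanishing and bend-and-break become available), and then use the global $F$-regular hypothesis on $X'$ — which is preserved along the MMP — to control the non-RCC locus via Frobenius splittings. The hypothesis $p\geq 11$ is expected to enter in this final step, through a combination of Bertini-type theorems in positive characteristic, singularity bounds needed to keep auxiliary divisors klt or $F$-regular after restriction, and the denominators arising when one descends rational curves from Frobenius twists.
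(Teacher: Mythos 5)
Your reduction to the Mori fiber space rests on the claim that ``rational chain connectedness is a birational invariant of proper varieties,'' and this is false; it is precisely the main difficulty the argument has to overcome. The standard counterexample (noted in the paper): the projective cone over an elliptic curve is rationally chain connected (every point is joined to the vertex by a line of the ruling), but its blowup at the vertex is a $\mathbb{P}^1$-bundle over an elliptic curve, which is not. So knowing that the end product $X'$ of the MMP is RCC does not give RCC for $X$. One must instead trace back through the MMP step by step: take a common log resolution $W$ of $X$ and $X'$, run a $(K_W+E)$-MMP over $X'$ (where $E$ is the $\beta$-exceptional divisor) which terminates at $X'$ itself, and then show for each divisorial contraction and each flip that RCC of the target implies RCC of the source. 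This requires knowing that the relevant exceptional divisors are normal (via the adjunction results of Hacon--Xu), that contracted divisors are rational surfaces or are ruled by rational curves over the image curve, and that flipping curves are rational. None of this is addressed in your proposal, and without it the argument does not start.

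Two further points. First, in the cases $\dim Z=1,2$ the concatenation you describe is incomplete: the fibers over distinct points of a rational curve $C\subset Z$ are disjoint, so to build a connected chain you need a \emph{horizontal} rational curve dominating $C$, i.e.\ a section or rational multisection of $f^{-1}(C)\to C$. This is supplied by the de Jong--Starr theorem (the positive-characteristic Graber--Harris--Starr), which requires the general fiber to be \emph{separably} rationally connected, hence smooth. For $\dim Z=1$ one needs that the general del Pezzo fiber is smooth, which is exactly Hirokado's theorem and is the sole source of the hypothesis $p\geq 11$; your speculation that $p\geq 11$ enters in the Fano case is misplaced. (One also needs that the general fiber of a globally $F$-regular fibration is again globally $F$-regular, which is a nontrivial base-change statement occupying a section of the paper.) Second, the case $\dim Z=0$ that you single out as the main obstacle is in fact the easy case: for a $\mathbb{Q}$-factorial variety with $-K$ ample and $\rho=1$ over an uncountable field, bend-and-break gives uniruledness, and the MRCC fibration must then have zero-dimensional base, since a positive-dimensional base would produce a divisor disjoint from a fiber, contradicting $\rho=1$. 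No Frobenius-pullback construction of free curves is needed there.
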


In some cases, we can show that $X$ is in fact separably rationally connected.

\begin{thm}[Theorem~\ref{dim1-section-2}]
\label{0gfc-src}
Let $k$ be an algebraically closed field of characteristic $p>0$.
Let $f:X\to Y$ be a projective surjective morphism between normal varieties over $k$ with $f_*\mathcal{O}_X=\mathcal{O}_Y$.  Assume that the following conditions hold.
\begin{enumerate}
\item{$p\geq 11$.}
\item{$X$ is a terminal globally $F$-regular threefold.}
\item{$-K_X$ is $f$-ample.}
\item{$\dim Y=1$ or $\dim Y=2.$}
\end{enumerate}
Then $X$ is separably rationally connected (SRC).
\end{thm}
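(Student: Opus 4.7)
The approach is to exhibit $X$ as a Mori fibration whose base $Y$ and generic fiber are both separably rationally connected, and then to combine these via the standard comb construction. I would begin by identifying the base $Y$: since $f_*\mathcal{O}_X = \mathcal{O}_Y$ and $X$ is globally $F$-regular, global $F$-regularity descends to $Y$. For $\dim Y = 1$, this forces $Y \cong \mathbb{P}^1$, since the only smooth projective globally $F$-regular curve is $\mathbb{P}^1$. For $\dim Y = 2$, $Y$ is a globally $F$-regular surface, hence of log Fano type with klt singularities; such surfaces over an algebraically closed field are known to be rational. In either case, $Y$ is separably rationally connected.

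Next, I would analyze the generic fiber $X_\eta$. Since $-K_X$ is $f$-ample and $X$ has terminal singularities, $X_\eta$ is a normal Fano variety over $K(Y)$ of dimension $3 - \dim Y \in \{1,2\}$. The crucial issue is generic smoothness: in positive characteristic, the generic fiber of a Mori fibration may fail to be geometrically smooth or even geometrically reduced. However, for $p \geq 11$ and $X$ terminal, the pathological (wild) cases ought to be excluded by bounds on wild conic bundles and wild del Pezzo fibrations. Once smoothness is secured, the $\dim Y = 2$ case yields $X_\eta \cong \mathbb{P}^1$ over at worst a quadratic extension of $K(Y)$, while the $\dim Y = 1$ case gives a smooth del Pezzo surface over $K(Y)$, which is SRC in characteristic $p \geq 11$ by known results on smooth del Pezzo surfaces.

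Finally, I would combine SRC of the base and of the generic fiber to obtain SRC of $X$. Pick a general point $x \in X$ with image $y = f(x)$, chosen so that $f$ is smooth near $x$ and $X_y$ is smooth. By SRC of the fiber, there exists a very free rational curve $C_1 \subset X_y$ through $x$; by SRC of $Y$, there is a very free rational curve $C_2' \subset Y$ through $y$, which lifts to $C_2 \subset X$ through $x$ via a local section of $f$ over $C_2'$. Attaching $C_1$ and $C_2$ at $x$, possibly with additional teeth taken from deformations of $C_1$, and smoothing the resulting comb, produces a very free rational curve in $X$.

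The main obstacle will be the second step: ruling out wild Mori fiber space structures and establishing separable rational connectedness of the smooth generic fiber over the non-algebraically closed field $K(Y)$ in positive characteristic. In particular, verifying that a smooth del Pezzo surface over the function field of $\mathbb{P}^1$ is SRC in characteristic $p \geq 11$ is the technically delicate part, and will require characteristic-$p$-specific methods for del Pezzo surfaces together with the $f$-ampleness of the relative anticanonical bundle.
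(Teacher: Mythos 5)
Your overall skeleton (base SRC, geometric generic fiber smooth and SRC, then glue by smoothing a comb) is the same as the paper's, but the three places you flag as ``delicate'' are exactly where the proof lives, and in each of them your sketch either omits the key input or proposes a step that does not work. First, the generic fiber: the paper does not exclude wild fibrations by any classification of wild conic bundles or del Pezzo fibrations. The mechanism is Theorem~\ref{general fiber} (global $F$-regularity of the total space passes to the geometric generic/general fibers), which immediately gives geometric normality and, for $\dim Y=2$, shows the geometric generic fiber is a globally $F$-regular curve, i.e.\ $\mathbb{P}^1$ (no quadratic extension enters). For $\dim Y=1$ one still has to upgrade ``normal with ample anticanonical'' to ``smooth'', and this is precisely Hirokado's Theorem~\ref{Hirokado}; it is the sole source of the hypothesis $p\geq 11$. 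Without identifying these two inputs there is no argument. Note also that SRC of the smooth geometric general fiber is then immediate (a smooth rational surface over an algebraically closed field is SRC); you do not need SRC over the non-closed field $K(Y)$.

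Second, your construction of the handle via ``a local section of $f$ over $C_2'$'' is not sufficient: a local section does not yield a complete rational curve in $X$ dominating the very free curve in $Y$. The paper obtains the handle as a genuine section of the pulled-back family $X\times_Y\mathbb{P}^1\to\mathbb{P}^1$ via de Jong--Starr (Theorem~\ref{ghs}), which already requires knowing the general fiber is smooth and SRC. Third, the comb smoothing in positive characteristic is the technical core (Proposition~\ref{dim1-src}): one must show that after attaching enough very free teeth the twisted normal bundle of the \emph{handle} becomes nef, i.e.\ $H^0(C_0,\mathcal{E}|_{C_0})=0$, and in characteristic $p$ this cannot be deduced from generic smoothness; the paper runs an explicit local-coordinate computation (following Hassett--Tschinkel) showing each attached tooth kills a section. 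Finally, Proposition~\ref{dim1-src} needs a smooth total space, whereas your $X$ is only terminal; the paper resolves $X$ and uses that terminal threefold singularities are isolated (Proposition~\ref{isolated}) so the geometric generic fiber is unchanged. As written, your proposal records the statement of what must be proved at each of these points rather than a proof.
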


Globally $F$-regular varieties were introduced by K.~Smith \cite{Sm} who drew inspiration from both the theory of tight closure and the theory of Frobenius split varieties.
Global $F$-regularity is a global property of a projective variety over a  field of positive characteristic and it imposes strong conditions on the structure of the variety.

The assumption that $X$ is globally $F$-regular is analogous to, but more restrictive than
assuming that $X$ is log Fano (for some boundary divisor $\Delta$).
Indeed, there exists del Pezzo surfaces in characteristic $p\leq 5$ which are
not globally $F$-regular (cf. \cite{hara}).
On the other hand,
a log Fano variety $X$ in characteristic zero,
is globally $F$-regular type \cite{SS}.   This means that
almost all the modulo $p$ reductions $X_{p}$ of $X$ are globally $F$-regular.

\begin{nasi}[The strategy for the main theorem]  The methods of  \cite{Zhang} and \cite{HM} do not work in positive characteristic since their proofs depend on semi-positivity theorems and the extension theorem.  The proofs of these theorems rely on the Kodaira vanishing theorem or on Hodge theory.  As is well-known, both the Kodaira vanishing theorem and semi-positivity can fail in positive characteristic \cite{Raynaud} \cite[3.2]{Moret_Bailly_Familles_de_courbes_et_de_varietes_abeliennes_sur_P_1_II_exemples}.
Let us overview the proof of Theorem~\ref{0gfr-rcc}.
Let $X$ be a projective globally $F$-regular threefold.
First, we use the minimal model program for threefolds in characteristic $p > 0$, as is recently developed in
\cite{CTX} and \cite{HX}.
We replace $X$ by the result of this minimal model program which, since a globally $F$-regular variety is log Fano, is a Mori fiber space $f : X \to Y$.
Note that one must be careful here because the property of rational chain connectedness is not stable under birational model changes,
e.g. the cone over elliptic curve is rationally chain connected, but
its blowup of the vertex is a $\mathbb P^1$-bundle over an elliptic curve, which is not rationally chain connected.

At this point, we have a Mori fiber space structure $f:X\to Y$.
For simplicity, we explain the proof in the case where $\dim Y>0$ (cf. Proposition~\ref{rcc-pic-one}).
It is enough to show that $Y$ and general fibers are rationally chain connected,
and that $f$ has a section.
It is easy to show the rational connectedness of $Y$ (cf. Lemma~\ref{glFness for MMP}).
The rational connectedness of general fibers holds by the following theorem.

\begin{thm}[Theorem~\ref{general fiber}]\label{0general-fiber}
Let $f : X \to Y$ be a proper morphism from a normal scheme to an $F$-finite integral scheme of characteristic $p > 0$, such that  $f_* \mathcal{O}_X \cong \mathcal{O}_Y$. Further let $\Delta$ be an effective $\mathbb{Q}$-divisor on $X$, such that $(X, \Delta)$ is globally $F$-regular. Then there is a non-empty open set $U \subseteq Y$, such that for every geometric point $y \in U$, $(X_y, \Delta_y)$ is globally $F$-regular.
\end{thm}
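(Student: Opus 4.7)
The plan is to apply the Schwede--Smith criterion for global $F$-regularity: a pair $(X',\Delta')$ on a normal projective variety over an algebraically closed field is globally $F$-regular if and only if there is an effective ample Cartier divisor $H$ on $X'$ with $X' \setminus \Supp H$ affine and with $(X',\Delta')$ strongly $F$-regular on this complement, together with an $\mathcal{O}_{X'}$-linear splitting of the natural map $\mathcal{O}_{X'} \to F^e_* \mathcal{O}_{X'}\bigl(\lceil (p^e-1)\Delta' \rceil + H\bigr)$ for some $e>0$. I will produce these two pieces of data on each fiber $X_y$ over a suitable open $U \subseteq Y$ by a relative construction on $X$ together with generic flatness and openness arguments.

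First, by generic flatness I shrink $Y$ so that $f$ is flat with geometrically normal (and integral) fibers. Choose an $f$-ample effective Cartier divisor $A$ on $X$ with $X \setminus A$ affine over $Y$; after shrinking $Y$ further, $A_y$ is an ample effective Cartier divisor on $X_y$ with affine complement for every geometric point $y \in Y$. Since $(X,\Delta)$ is globally $F$-regular, there exist $e>0$ and an $\mathcal{O}_X$-linear splitting $\phi \colon F^e_* \mathcal{O}_X(\lceil (p^e-1)\Delta \rceil + A) \to \mathcal{O}_X$ of the natural inclusion $\iota$. Because the absolute Frobenius $F^e \colon X \to X$ is a homeomorphism on underlying spaces and restricts, on the closed subscheme $X_y$, to the absolute Frobenius of $X_y$, one has a canonical identification $(F^e_* \mathcal{F})|_{X_y} \cong F^e_*(\mathcal{F}|_{X_y})$ for quasi-coherent $\mathcal{F}$; flatness of the sheaves involved over $Y$ (after one more shrinking of $U$) then ensures that the splitting relation $\phi \circ \iota = \mathrm{id}$ restricts to $\phi_y \circ \iota_y = \mathrm{id}$ on every fiber. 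This provides the required splitting for $(X_y, \Delta_y)$ with ample divisor $A_y$.

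For the second piece of data, strong $F$-regularity of $(X \setminus A, \Delta|_{X \setminus A})$ follows from global $F$-regularity of $(X,\Delta)$, and by the openness of strong $F$-regularity in flat families (in the style of Hashimoto and Schwede--Tucker), after a further shrinking of $Y$ the fibers $(X_y \setminus A_y, \Delta_y|_{X_y \setminus A_y})$ are strongly $F$-regular for every geometric $y \in U$. Applying the Schwede--Smith criterion to $(X_y, \Delta_y)$ with the ample divisor $A_y$ then yields global $F$-regularity of $(X_y, \Delta_y)$, concluding the proof. The principal obstacle is verifying that Frobenius pushforward is compatible with restriction to the fiber in a way preserving the splitting relation; this relies on flatness over $Y$ of the relevant reflexive divisorial sheaves (possibly forcing additional shrinkings of the base) and on the correct behavior of $\lceil (p^e-1)\Delta \rceil$ under restriction to a fiber, which is automatic once the fibers are geometrically normal and $\Delta$ has no fiber-contained components after shrinking.
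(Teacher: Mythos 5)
There is a genuine gap at the central step of your argument: the claimed ``canonical identification $(F^e_* \mathcal{F})|_{X_y} \cong F^e_*(\mathcal{F}|_{X_y})$'' is false, and with it the assertion that the splitting $\phi \circ \iota = \mathrm{id}$ restricts to the fiber. Although the absolute Frobenius of $X$ does induce the absolute Frobenius of the closed subscheme $X_y$, the relevant square is not cartesian: already for $X = Y = \Spec k[t]$ and $y = \{t=0\}$ one has $(F_* \cO_X)|_{X_y} = k[t] \otimes_{k[t],\, t \mapsto t^p} k[t]/(t) \cong k[t]/(t^p)$, which is not $F_*(\cO_{X_y}) = k$, and everything here is flat, so flatness does not repair this. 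Equivalently, a $p^{-e}$-linear splitting $\phi$ satisfies $\phi(I_y^{[p^e]}\mathcal{G}) \subseteq I_y$ but in general \emph{not} $\phi(I_y\mathcal{G}) \subseteq I_y$, which is exactly what would be needed for $\phi$ to descend to a splitting on $X_y$; this is the classical issue that a Frobenius splitting restricts to a subvariety only when the subvariety is \emph{compatibly} split, which is not automatic for fibers of $f$.

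This is precisely the difficulty the paper's Section \ref{section-general-fiber} is built to circumvent. The paper factors the absolute Frobenius through the base change $\cO_X \to \cO_X \otimes_B B^{1/p^e} \to (\cO_X(\lceil (p^e-1)\Delta\rceil + A))^{1/p^e}$ and shows (Proposition \ref{prop:globally_F_regular_base_change} together with Remark \ref{rem.GloballyFRegForSameE}) that the second, \emph{relative} map already splits as a map of $\cO_X \otimes_B B^{1/p^e}$-modules; establishing this requires the dualization argument with $\sHom_{\cO_X}(\blank,\cO_X)$ and the hypothesis that $H^0$ of the generic fiber is a separable extension of $K = k(\eta)$, so that $H^0(X_K, \cO_{X_K}\otimes_K K^{1/p^e})$ is a field. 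Only this $B^{1/p^e}$-linear splitting can legitimately be tensored by $\otimes_{B^{1/p^e}} k(y)^{1/p^e}$ and reflexified to produce a splitting on the fiber (using that $k(y)$ is perfect to identify $X_{k(y)^{1/p^e}}$ with $X_y$). Your proposal skips this entire reduction, and without it the splitting datum on $X_y$ is simply not produced. Your remaining ingredients are fine in spirit --- the criterion of \cite[Theorem 3.9]{SS} is the right target, and the paper even sidesteps your appeal to openness of strong $F$-regularity in families by choosing $A$ so that $X\setminus A$ is affine and \emph{smooth} over $Y$, making the fibers of the complement regular --- but the proof does not close as written.
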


It remains to show that $f$ has a section.
However, we only establish somewhat weaker results which are sufficient for our purposes
cf. Proposition~\ref{dim1-section} and Proposition~\ref{p1-fib-lemma}.  These follow from
\cite{mh} and \cite{dj-st} (a positive characteristic analog of \cite{ghs}) respectively.
Since \cite{mh} needs the assumption that $p\geq 11$, so do we.
\end{nasi}

We show Theorem~\ref{0gfc-src} by employing some known results,
Theorem~\ref{0general-fiber} and Theorem~\ref{0src} below.

\begin{thm}[Proposition~\ref{dim1-src}]\label{0src}
Let $k$ be an algebraically closed field of positive characteristic.
Let $f:X \to Y$ be a surjective morphism such that  $X$ and $Y$ are smooth projective varieties over $k$ such that $Y$ is SRC and also that the geometric generic fiber of $f$ is smooth, irreducible and SRC.  Then $X$ is SRC.
\end{thm}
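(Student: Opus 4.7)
Since for smooth projective varieties SRC is equivalent to the existence of a very free rational curve through a general point, the goal is to construct such a curve through a general point $x\in X$. Set $y:=f(x)$. Because $Y$ is SRC, there exists a very free morphism $g:\mathbb{P}^1\to Y$ with $g(0)=y$. Form the base change $Z:=X\times_Y\mathbb{P}^1$ with projection $\pi:Z\to\mathbb{P}^1$. The geometric generic fiber of $\pi$ coincides with that of $f$, hence is smooth, irreducible and SRC.

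Next I would invoke the theorem of de~Jong--Starr \cite{dj-st}: a proper morphism to a smooth projective curve whose geometric generic fiber is separably rationally connected admits a section, and because of SRC one may additionally demand that the section pass through any prescribed smooth point of a fiber. Applying this to $\pi$, choose a section $s:\mathbb{P}^1\to Z$ with $s(0)=(x,0)$; composing with the first projection yields $h_0:\mathbb{P}^1\to X$ with $h_0(0)=x$ and $f\circ h_0 = g$. At this stage $h_0$ is only a section of $f$ lying over a very free curve on $Y$; in particular the quotient $h_0^* f^* T_Y$ is already ample, but there is no a priori positivity in the vertical direction.

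To upgrade $h_0$ to a very free curve I would attach teeth in the fibers. By openness of smoothness and of separable rational connectedness in smooth proper families, the fiber $X_{g(t)}$ is smooth and SRC for general $t\in\mathbb{P}^1$, so it contains a very free rational curve $D_t$ through $h_0(t)$. Pick finitely many general $t_1,\dots,t_N\in\mathbb{P}^1$ and corresponding teeth $D_{t_1},\dots,D_{t_N}$, forming a comb $C\subset X$ with handle $h_0(\mathbb{P}^1)$. By the Koll\'ar--Miyaoka--Mori smoothing-of-combs technique (see \cite[Ch.~II.7]{Kollar2}), valid in positive characteristic, $C$ deforms to a morphism $h:\mathbb{P}^1\to X$ with $h(0)=x$. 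From the exact sequence
\[0\to T_{X/Y}|_h \to T_X|_h \to h^*f^*T_Y\to 0,\]
the quotient is ample because $f\circ h$ is a deformation of the very free curve $g$, and for $N$ sufficiently large the very free teeth make $T_{X/Y}|_h$ ample. Hence $T_X|_h$ is ample and $h$ is very free.

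The main obstacle is the final step: ensuring that the smoothed curve is genuinely very free (not merely free) in positive characteristic. This requires the comb-smoothing deformation to be separable and the positivity contributed by the teeth to propagate honestly to the handle. This is exactly the reason the hypothesis demands that the geometric generic fiber of $f$ be smooth, irreducible, and SRC, rather than just rationally connected: it guarantees both that de~Jong--Starr supplies sections with controlled smoothness and that the teeth within the fibers can be chosen to be very free, so the separable version of the smoothing lemma applies.
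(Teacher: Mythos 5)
Your overall route is the same as the paper's: a very free curve in $Y$, base change, a de~Jong--Starr section as the handle, very free teeth in general fibers, and a smoothing of the resulting comb. The gap is in the sentence ``for $N$ sufficiently large the very free teeth make $T_{X/Y}|_h$ ample,'' which is precisely the point where the paper's proof does essentially all of its work, and which does not follow from the general comb-smoothing machinery you cite. Attaching teeth does not change $T_{X/Y}|_{C_0}$ at all; what it changes is the normal bundle of the comb along the handle, by an elementary transformation at each node \emph{in the single tangent direction of the attached tooth}. So the positivity you gain on the handle is confined to the directions spanned by the teeth at the attachment points, and you must show these directions can be chosen so as to kill every section of $\mathcal{E}|_{C_0}=\ker(g^*\Omega_X\to\Omega_C)|_{C_0}$. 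In characteristic zero one waves this away by generic choice; in positive characteristic the relevant evaluation maps need not be separable, so ``general teeth'' is not a proof. The paper handles this constructively: any nonzero section $s\in H^0(C_0,\mathcal{E}|_{C_0})$ injects into $H^0(C_0,g^*\Omega_{X/Y}|_{C_0})$ (because the handle lies over a very free curve, so $g^*f^*\Omega_Y|_{C_0}$ has no sections), one picks a point $P$ where $s$ is nonzero in $\Omega_{X/Y}\otimes k(P)$, chooses local coordinates so that $dx_2$ represents $s_P$, and uses \cite[Thm 2.42]{Debarre} to attach a very free tooth in the fiber through $P$ tangent to the $x_2$-direction; a local computation then shows this tooth removes $s$ from the kernel sheaf, so finitely many such teeth reduce $h^0(C_0,\mathcal{E}|_{C_0})$ to zero. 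Without this (or an equivalent separability argument), your claim that the smoothed curve is very free rather than merely free is unsupported — and you yourself flag this as ``the main obstacle'' without resolving it.

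Two smaller points. First, you ask de~Jong--Starr to produce a section through a prescribed point $(x,0)$; the theorem as cited only gives existence of a section, and forcing it through a chosen point is an additional (comb-type) argument. It is also unnecessary: for a smooth projective variety a single very free curve already implies SRC (Theorem~\ref{thm:SRC_fundamental}), which is why the paper only arranges that the marked point $r$ lie in a smooth fiber and a smooth point of the image. Second, the paper takes $Z$ to be the dominating component of $X\times_Y\mathbb{P}^1$ with its reduced structure and checks integrality before applying de~Jong--Starr; your $Z=X\times_Y\mathbb{P}^1$ needs the same care.
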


\medskip

We now turn back to characteristic zero.
As was already mentioned globally $F$-regular varieties are closely related to log Fano varieties.  A log Fano variety $X$ in characteristic zero
is globally $F$-regular type \cite{SS}.
The inverse assertion is an open problem.
The following theorem provides positive evidence towards this conjecture.

\begin{thm}[Theorem~\ref{rat-con}]\label{0gfrtype-rc}
Let $k$ be an algebraically closed field of characteristic zero.
Let $X$ be a $\mathbb{Q}$-Gorenstein normal projective variety over $k$
which is of globally $F$-regular type.
Then $X$ is rationally connected.
\end{thm}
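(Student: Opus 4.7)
The plan is to spread $X$ out to mixed characteristic, obtain rational chain connectedness of closed fibres from positive-characteristic results on globally $F$-regular varieties, propagate this to the geometric generic fibre, and finally upgrade from RCC to rational connectedness using the good singularities of $X$.

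Concretely, choose a finitely generated $\mathbb{Z}$-subalgebra $A \subset k$ and a flat normal projective $\mathbb{Q}$-Gorenstein model $\pi \colon \mathcal{X} \to \Spec A$ whose geometric generic fibre recovers $X$. By the definition of globally $F$-regular type, the set $S \subset \Spec A$ of closed points $s$ for which the geometric fibre $\overline{\mathcal{X}_s}$ is globally $F$-regular is dense, and may be chosen so that the residue characteristics $p(s)$ grow without bound along $S$. By the structure theorem of Schwede--Smith, any globally $F$-regular $\mathbb{Q}$-Gorenstein projective variety $Z$ in characteristic $p>0$ admits an effective $\mathbb{Q}$-divisor $\Delta$ such that $(Z,\Delta)$ is klt and $-(K_Z+\Delta)$ is ample; in other words $Z$ is log Fano. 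Log Fano varieties in positive characteristic are rationally chain connected: for smooth Fanos this is \cite[V.2.14]{Kollar2}, and the singular case reduces to this by running an MMP on $(Z,\Delta)$ to a Mori fibre space whose general fibres are log Fano of smaller dimension and whose base is again of Fano type (cf.\ Proposition~\ref{rcc-pic-one}), followed by induction on dimension. Hence $\overline{\mathcal{X}_s}$ is rationally chain connected for every $s \in S$.

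A standard spreading-out argument, using the properness in families of the relative space of connected chains of rational curves joining two general points, now propagates RCC to the geometric generic fibre, so $X$ is rationally chain connected. Finally, globally $F$-regular type forces klt (hence rational) singularities in characteristic zero, and a $\mathbb{Q}$-Gorenstein normal projective variety with rational singularities over an algebraically closed field of characteristic zero that is rationally chain connected is in fact rationally connected (by lifting to a resolution and observing that the MRC fibration is trivial there). This completes the argument.

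The principal obstacle is the RCC assertion for singular log Fanos in arbitrary dimension and arbitrary positive characteristic invoked in the second paragraph: outside the smooth Fano case of Koll\'ar--Miyaoka--Mori, it requires a nontrivial MMP-plus-induction argument in positive characteristic. A cleaner substitute would be a direct proof that globally $F$-regular type implies of Fano type in characteristic zero, but that is precisely the inverse assertion flagged as open in the introduction.
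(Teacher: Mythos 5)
Your strategy runs in the wrong characteristic, and this creates two genuine gaps. The first and fatal one is the assertion that singular log Fano varieties in positive characteristic are rationally chain connected in every dimension and every characteristic. This is not a known result, and your sketched proof of it (run an MMP on $(Z,\Delta)$ to a Mori fibre space and induct) does not go through: the minimal model program is available in positive characteristic only for threefolds, and even there only for $p$ large enough --- which is exactly why the main theorem of this paper is restricted to threefolds with $p\geq 11$. Moreover, even granting a Mori fibre space $Z'\to W$ with RCC base and RCC fibres, one cannot conclude RCC of $Z'$ without producing sections (or at least connecting chains meeting many fibres); in characteristic $p$ this is where the paper must invoke de Jong--Starr, which needs \emph{separable} rational connectedness of the fibres, and Hirokado's smoothness theorem, which needs $p\geq 11$. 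So the statement you rely on is, in the generality you need it, an open problem rather than an obstacle to be waved at. A second, independent gap is the ``standard spreading-out argument'' from RCC of all closed fibres of $\mathcal{X}\to\Spec A$ to RCC of the geometric generic fibre. That direction of specialization requires a uniform bound on the degree of the connecting chains across the closed fibres: for each fixed degree $d$ the locus of RCC-in-degree-$\leq d$ fibres is constructible, but a countable union of proper constructible subsets of $\Spec A$ can contain every closed point (as in $\Spec\mathbb{Z}$), so density of RCC closed fibres alone does not force the generic fibre to be RCC. No such uniform bound comes out of an MMP-based argument. (The paper's Lemma~\ref{base_change_rcc} goes in the opposite, easy direction: from the generic fibre down to closed fibres.)

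The paper's proof sidesteps both problems by never leaving characteristic zero except for one numerical input. Reduction modulo $p$ is used only to prove Lemma~\ref{pseudo-effective}: intersecting $K_{X}$ with movable curves and using the characteristic-$p$ structure theorem $K_{X_\mu}+\Delta_\mu\sim_{\mathbb{Q}}0$ (resp.\ $-K_{X_\mu}$ big) shows that $K_X$ is not pseudo-effective. From there everything is characteristic-zero technology: BCHM produces a Mori fibre space $X'\to Y$, the general fibre is a klt Fano and hence rationally connected by Zhang and Hacon--M\textsuperscript{c}Kernan, the base is again of globally $F$-regular type and handled by induction on dimension, Graber--Harris--Starr gives RCC of $X'$, and RCC implies RC for klt varieties by Hacon--M\textsuperscript{c}Kernan. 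If you want to salvage your outline, the correct move is exactly this: transport only the pseudo-effectivity information across the reduction, not the rational chain connectedness itself. Your final step (RCC $\Rightarrow$ RC) is essentially right but should be cited as the klt statement of Hacon--M\textsuperscript{c}Kernan rather than justified by ``lifting to a resolution,'' since RCC is not a birational invariant.
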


 \begin{ack} This research started at the workshop \emph{The minimal model program in characteristic p} held at the American Institute of Mathematics (AIM) in May of 2013. The authors would like to thank AIM, especially Professors  James $\mathrm{M^{c}}$Kernan and Chenyang Xu, who were the  organizers of the workshop, for their hospitality. The authors would like to thank Professors Paolo Cascini, Brendan Hassett, Yujiro Kawamata, Amanda Knecht, J\'anos Koll\'ar, Shunsuke Takagi, Zhiyu Tian, and Chenyang Xu for discussions. In particular, the 1st step of the proof of Theorem \ref{0gfr-rcc} depends on Professor Koll\'ar's advice and Proposition \ref{dim1-src} came out of discussions that Zsolt Patakfalvi and Zhiyuan Li had with Professor Hassett.  We would also like to thank the referee for many useful comments and thoughtful suggestions.
 \end{ack}

\section{Preliminaries}\label{section-preliminaries}

We start with the definition of globally $F$-split and globally $F$-regular varieties:

\begin{dfn}
Let $X$ be a variety over an $F$-finite field $k$.  Then $X$ is \emph{(globally) $F$-split} if the map $\cO_X \to F_* \cO_X$ splits as a map of $\cO_X$-modules.
\end{dfn}

We need the definition of globally $F$-regular varieties for pairs.

\begin{dfn}
Let $X$ be a normal variety over an $F$-finite field $k$ and
let $\Delta$ be an effective \Q-divisor on $X$.
We say $(X, \Delta)$ is \emph{globally $F$-regular} if
for every effective \Q-divisor $E$, there exists $e\in\mathbb Z_{>0}$ such that
the composition homomorphism
$$\mathcal O_X\overset{F^e}\to F_*^e\mathcal O_X\hookrightarrow F^e_* \mathcal O_X(\ulcorner(p^e-1)\Delta+E\urcorner)$$
 splits as an $\mathcal O_X$-module homomorphism where the latter map is the natural injection.

 If $X$ is quasi-projective (respectively G1\footnote{A variety is G1 if it is Gorenstein in codimension 1} and S2), but not a-priori normal, we say that $X$ is \emph{globally $F$-regular} if for every effective Cartier divisor $E \geq 0$ (respectively effective Weil divisorial sheaf $E$ which is Cartier in codimension 1), there exists an $e > 0$ such that the map
 \[
 \cO_X \overset{F^e}\to F_*^e\mathcal O_X\hookrightarrow F^e_* \mathcal O_X(E)
 \]
 splits as an $\cO_X$-module homomorphism.  In either case, it follows that $X$ is then normal by \cite[Corollary 5.11]{HHbs}.
\end{dfn}

\begin{rem}
The assumption that $X$ is G1 and S2 is a mild weakening of normality, see \cite{HartshorneGeneralizedDivisors} for a discussion.  In the next section we will frequently take a normal variety $X$ over a field $k$ and form the base change with a field extension $K \supseteq k$.  Unfortunately, the result $X_K$ may not be normal a priori.  However, because the G1 condition and the S2 condition are preserved by base field extension, $X_K$ is automatically G1 and S2.  In this paper the $X_K$ we form will later be proven to be globally $F$-regular and hence  a posteriori $X_K$ was normal all along.
\end{rem}

The following lemmas follow from the definition directly.

\begin{lem}\label{lem_1.2}
If $(X, \Delta)$ is a globally $F$-regular variety over an $F$-finite field,
then, for every $0\leq \Delta'\leq \Delta$, so is $(X, \Delta')$.
\end{lem}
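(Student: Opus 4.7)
The plan is to deduce the lemma directly from the definition of global $F$-regularity, exploiting the fact that $\Delta'\le\Delta$ produces an inclusion of the relevant target sheaves, so that any splitting for the larger pair restricts to a splitting for the smaller one.

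Fix an arbitrary effective $\mathbb{Q}$-divisor $E'$ on $X$; we must produce an $e>0$ such that
$$\mathcal{O}_X \xrightarrow{F^e} F^e_*\mathcal{O}_X \hookrightarrow F^e_*\mathcal{O}_X(\ulcorner (p^e-1)\Delta' + E'\urcorner)$$
splits as an $\mathcal{O}_X$-module map. Apply the global $F$-regularity hypothesis to the pair $(X,\Delta)$ with the same divisor $E=E'$ to obtain some $e>0$ for which the analogous map with $\Delta$ in place of $\Delta'$ splits. Since $0\le\Delta'\le\Delta$, we have $(p^e-1)\Delta' + E' \le (p^e-1)\Delta + E'$, giving a natural inclusion
$$F^e_*\mathcal{O}_X(\ulcorner (p^e-1)\Delta' + E'\urcorner) \hookrightarrow F^e_*\mathcal{O}_X(\ulcorner (p^e-1)\Delta + E'\urcorner),$$
through which the $(X,\Delta)$-map factors.

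The remaining point is the elementary categorical observation that if a composition $A\to B\to C$ of $\mathcal{O}_X$-module maps admits a left inverse $\rho:C\to A$, then $\rho$ composed with the second arrow is a left inverse of $A\to B$. Applied with $A=\mathcal{O}_X$, $B=F^e_*\mathcal{O}_X(\ulcorner (p^e-1)\Delta' + E'\urcorner)$, and $C=F^e_*\mathcal{O}_X(\ulcorner (p^e-1)\Delta + E'\urcorner)$, this produces the desired splitting for $(X,\Delta')$. As $E'$ was arbitrary, $(X,\Delta')$ is globally $F$-regular. There is essentially no obstacle; the statement is formal from the definition, the only subtlety being the observation that \emph{the same} choice of $E'$ works on both sides, so the extra mass $(p^e-1)(\Delta-\Delta')$ is absorbed by the sheaf inclusion rather than requiring a modification of $e$.
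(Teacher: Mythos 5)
Your proof is correct and is exactly the argument the paper has in mind: the paper states that this lemma ``follows from the definition directly'' and gives no further details, and your factorization of the splitting through the inclusion $F^e_*\mathcal{O}_X(\ulcorner (p^e-1)\Delta' + E'\urcorner) \hookrightarrow F^e_*\mathcal{O}_X(\ulcorner (p^e-1)\Delta + E'\urcorner)$ is the standard way to make that precise. No issues.
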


\begin{lem}\label{glFness for MMP}Let $f: X \dashrightarrow X_1$ be a small birational map or a proper  morphism with $f_*\mathcal O_X=\mathcal O_{X_1}$ of normal varieties over an $F$-finite field of characteristic $p>0$.
If $X$ is globally $F$-regular (resp. globally $F$-split), then so is $X_1$.
\end{lem}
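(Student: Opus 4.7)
The plan is to produce the required splittings on $X_1$ by transporting those known to exist on $X$. In both cases the key point is that the splitting condition on $X_1$ can be recovered from the corresponding one on $X$ via either proper pushforward or reflexive extension across codimension two.

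For the proper morphism case, it suffices to verify the splitting property for every effective Cartier divisor $E_1$ on $X_1$ (by a standard equivalent formulation of global $F$-regularity). Setting $E := f^*E_1$, an effective Cartier divisor on $X$, global $F$-regularity of $X$ gives $e > 0$ and a splitting $\phi\colon F^e_* \mathcal{O}_X(E) \to \mathcal{O}_X$ of the composition $\mathcal{O}_X \to F^e_* \mathcal{O}_X \hookrightarrow F^e_* \mathcal{O}_X(E)$. Applying $f_*$ and combining the naturality of Frobenius ($f_* \circ F^e_{X,*} = F^e_{X_1,*} \circ f_*$, since Frobenius commutes with every morphism), the hypothesis $f_*\mathcal{O}_X = \mathcal{O}_{X_1}$, and the projection formula $f_* \mathcal{O}_X(f^*E_1) = \mathcal{O}_{X_1}(E_1)$, the pushforward $f_*\phi$ becomes a splitting of the analogous composition on $X_1$, proving $X_1$ is globally $F$-regular.

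For the small birational case, choose big open subsets $U \subset X$ and $V \subset X_1$ with $f|_U\colon U \to V$ an isomorphism and with complements of codimension at least two. Any effective divisor $E_1$ on $X_1$ transports via this isomorphism (taking closures of the codimension-one components) to an effective divisor $E$ on $X$; global $F$-regularity of $X$ provides a splitting of the Frobenius composition twisted by $E$. Restricting to $U$ and transporting to $V$, the resulting map extends uniquely to all of $X_1$ because $\mathcal{O}_{X_1}$ and $F^e_* \mathcal{O}_{X_1}(E_1)$ are both reflexive on the normal variety $X_1$, and the extension remains a splitting since this can be detected on the dense open $V$. The globally $F$-split case is handled identically but without any divisors to track; the main technical checkpoint, more a matter of diagrammatic bookkeeping than a genuine obstacle, is verifying that the pushed-forward (respectively, reflexively extended) map is indeed a splitting of the correct Frobenius composition on $X_1$, which is ensured by the naturality of Frobenius and the projection formula (respectively, by checking on the big open set).
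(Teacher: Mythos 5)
Your argument is correct and is precisely the standard verification; the paper gives no proof at all, stating only that the lemma ``follows from the definition directly.'' Both halves of your write-up --- pushing the splitting forward using $f_*F^e_* = F^e_* f_*$, $f_*\mathcal{O}_X=\mathcal{O}_{X_1}$ and the projection formula in the proper case, and transporting the splitting through the codimension-two-complement isomorphism and extending by reflexivity in the small birational case --- are exactly the details the paper leaves implicit.
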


It turns out that globally $F$-regular varieties are always log Fano variety with respect to some boundary.

\begin{thm}[{\cite[Theorem 4.3]{SS}}]\label{ss1}
Let $X$ be a normal projective variety over an $F$-finite field of characteristic $p>0$.
If $X$ is globally $F$-regular $($resp. globally $F$-split$)$, then there exists an effective $\mathbb{Q}$-divisor $\Delta$ such that $(X, \Delta)$ is a globally $F$-regular pair (resp.  sharply globally  $F$-split pair ) and $-(K_X+\Delta)$ is ample $($resp.  $K_X+\Delta \sim_{\mathbb{Q}} 0$ $)$.  In particular,
$(X,\Delta)$ is Kawamata log terminal (resp. log canonical ).  \end{thm}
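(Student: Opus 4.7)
The plan is to extract $\Delta$ directly from a single Frobenius splitting, using Grothendieck duality for the Frobenius morphism $F^e:X\to X$. Since $F^e$ is a finite morphism between normal varieties, for any effective Weil divisor $E$ there is a canonical isomorphism
$\sHom_{\cO_X}(F^e_*\cO_X(E),\cO_X)\cong F^e_*\cO_X((1-p^e)K_X-E)$,
under which a splitting of the composition $\cO_X\hookrightarrow F^e_*\cO_X\hookrightarrow F^e_*\cO_X(E)$ corresponds to a global section whose zero locus is an effective divisor $D$ satisfying $D\sim (1-p^e)K_X-E$. This correspondence is the engine of the entire argument.

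In the globally $F$-split case, I would take $E=0$: any Frobenius splitting $\phi:F^e_*\cO_X\to\cO_X$ produces an effective $D\sim(1-p^e)K_X$, and setting $\Delta:=\frac{1}{p^e-1}D$ yields $K_X+\Delta\sim_{\mathbb Q}0$, with $\phi$ factoring through $F^e_*\cO_X((p^e-1)\Delta)$ by construction, so $(X,\Delta)$ is sharply globally $F$-split. In the globally $F$-regular case, I would instead fix an ample effective Cartier divisor $A$ and take $E=A$: global $F$-regularity of $X$ produces a splitting $F^e_*\cO_X(A)\to\cO_X$ for some $e>0$, the associated effective $D\sim(1-p^e)K_X-A$ gives $\Delta:=\frac{1}{p^e-1}D$, and one computes $(p^e-1)(K_X+\Delta)\sim -A$, so that $-(K_X+\Delta)$ is ample.

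The main obstacle is verifying that the pair $(X,\Delta)$ just built is itself globally $F$-regular, not merely that a single splitting against $\Delta$ exists. The pair definition demands compatible splittings of $\cO_X\to F^{e'}_*\cO_X(\lceil(p^{e'}-1)\Delta\rceil+E')$ for \emph{every} additional effective divisor $E'$ and some $e'>0$. Here the ampleness of $A$ is crucial: any $E'$ can be dominated, up to linear equivalence, by some multiple $mA$, after which a further application of the global $F$-regularity of $X$ to a sufficiently large multiple of $A$ produces, via the same duality, the required splittings carrying the additional divisor $E'$. This ``absorption'' of an arbitrary effective divisor into an ample one is the technical heart of \cite[Theorem~4.3]{SS}. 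Finally, the Kawamata log terminal (resp.\ log canonical) conclusion is automatic from the local theory in positive characteristic: globally $F$-regular pairs are strongly $F$-regular at every point, hence KLT, and sharply globally $F$-split pairs are $F$-pure at every point, hence log canonical.
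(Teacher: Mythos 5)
This theorem is quoted by the paper from \cite[Theorem 4.3]{SS} without proof, so there is no in-paper argument to compare against; measured against the cited source, your sketch is essentially the standard Schwede--Smith argument and is correct in outline. The two pillars you identify --- Grothendieck duality for $F^e$ converting a splitting of $\cO_X \to F^e_*\cO_X(E)$ into an effective divisor $D \sim (1-p^e)K_X - E$, and the use of an ample $A$ to absorb an arbitrary effective $E'$ (via iterating the basic splitting to grow the coefficient of $A$ and then twisting by a section of $mA - E'$, i.e.\ the content of \cite[Theorem 3.9]{SS}) --- are exactly the ones the reference uses, and the concluding klt/lc statements follow, as you say, from the local Hara--Watanabe correspondence once one notes that $K_X+\Delta$ is $\mathbb{Q}$-Cartier by construction.
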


We now turn to some preliminary results about rationally connected varieties.
\begin{dfn} \cite[IV.3.2]{Kollar2}
\label{dfn.rationallyConnected}
Suppose that $X$ is a variety over a field $k$.
\begin{enumerate}
\item $X$ is \emph{rationally chain connected} if there is a proper family of connected algebraic curves $g : U \to Y$ such that the geometric fibers have only rational components and there is a cycle morphism ${u : U \to X}$ such that $u^{(2)} : U \times_Y U \to X \times_k X$ is dominant.
\item $X$ is \emph{rationally connected} if it is rationally chain connected as above but in addition the geometric fibers of $g$ are irreducible.
\item $X$ is \emph{separably rationally connected} if it is rationally connected and additionally $U = Y \times \mathbb{P}^1$ and $u^{(2)}$ is smooth at the generic point.
\end{enumerate}
\end{dfn}

We use the following base change property:

\begin{lem}[cf. {\cite[Ch IV, 3.2.5 Exercise]{Kollar2}}]\label{base_change_rcc}Let $k'/k$ be the field extension of algebraic closed fields and $X$ a proper variety over $k$. If $X'=X\times_{k}k'$ is rationally chain connected over $k'$, then so is $X$ over $k$

\end{lem}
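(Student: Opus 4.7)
The plan is to descend a witness for rational chain connectedness from $X' = X \times_k k'$ back to $X$ by spreading out. Writing $k'$ as the filtered colimit of its finitely generated $k$-subalgebras, the given witness $(g' : U' \to Y',\ u' : U' \to X')$ descends to data $(g_R : U_R \to Y_R,\ u_R : U_R \to X_R := X \times_k R)$ over some finitely generated $k$-subalgebra $R \subset k'$, whose base change along $R \hookrightarrow k'$ recovers the original family. Since $k$ is algebraically closed in $k'$, I may arrange that $R$ is geometrically integral over $k$; then $\Spec R$ is a $k$-variety, in which $k$-points are dense in every non-empty open subset.

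The goal is to locate a $k$-point $s \in \Spec R$ for which $(g_s : U_s \to Y_s,\ u_s : U_s \to X)$ witnesses $X$ being rationally chain connected over $k$. Two conditions must hold at $s$: (a) all geometric fibers of $g_s$ are connected with only rational components, and (b) $u_s^{(2)} : U_s \times_{Y_s} U_s \to X \times_k X$ is dominant. For (a), let $T \subseteq Y_R$ be the locus where the geometric fiber of $g_R$ is a connected curve with only rational components. Then $T$ is constructible, and it contains the generic fiber $Y_K := Y_R \times_R \mathrm{Frac}(R)$, since pulling $g_R|_{Y_K}$ back along $\mathrm{Frac}(R) \hookrightarrow k'$ recovers $g'$. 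Consequently the bad locus $B := Y_R \setminus T$ projects under $Y_R \to \Spec R$ to a constructible subset avoiding the generic point $\eta \in \Spec R$, whose closure is a proper closed subset of $\Spec R$; any $s$ in the complement satisfies $Y_s \subseteq T$.

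For (b), I first verify that the total morphism $u_R^{(2)} : W_R := U_R \times_{Y_R} U_R \to Z_R := X \times_k X \times_k \Spec R$ is dominant. The target $Z_R$ is irreducible because $X \times_k X$ is geometrically irreducible and $R$ is geometrically integral, both over algebraically closed $k$. The projection $\pi_Z : Z' := Z_R \times_R \Spec k' \to Z_R$ has topological image exactly the generic fiber $Z_K := Z_R \times_R \mathrm{Frac}(R)$, and dominance of $u'^{(2)}$ forces $\pi_Z(\mathrm{Im}(u'^{(2)}))$ to be dense in $Z_K$; hence $\mathrm{Im}(u_R^{(2)})$ is dense in $Z_K$, and taking closure in the irreducible $Z_R$ (which contains the generic point of $Z_K$) forces $\overline{\mathrm{Im}(u_R^{(2)})} = Z_R$. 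By Chevalley, $\mathrm{Im}(u_R^{(2)})$ is constructible and contains a non-empty open $\Omega \subseteq Z_R$, whose projection to $\Spec R$ contains $\eta$ and hence a non-empty open $V' \subseteq \Spec R$. For $s \in V'$ the slice $\Omega_s$ is a non-empty open subset of the irreducible $X \times_k X$, hence dense, so $u_s^{(2)}$ is dominant. Choosing a $k$-point $s$ in the intersection of the good open subsets of $\Spec R$ produces the required witness.

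The main technical subtlety I anticipate is the constructibility of $T$, namely the statement that in a proper family of curves the locus where the geometric fiber is a connected union of rational components is constructible. This follows from standard semicontinuity arguments (e.g.\ via the arithmetic genus and the relative normalization), but it is the point requiring the most care. One could alternatively sidestep it by working instead with the parameter space of chains of rational curves on $X$, a countable disjoint union of finite-type $k$-schemes whose formation commutes with base change, and invoking the fact that dominance of a morphism of finite-type $k$-schemes with irreducible target is preserved under base change of algebraically closed fields in both directions.
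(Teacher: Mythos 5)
Your overall architecture is the same as the paper's: spread the witness family out over a finitely generated $k$-subalgebra $R\subset k'$, check the two defining conditions at the generic point of $\Spec R$, and specialize to a closed (hence $k$-rational, by the Nullstellensatz) point. The dominance part (b) matches the paper's argument essentially verbatim. The difference is at the delicate step (a). The paper takes $B$ inside $\mathrm{Chow}(X')$ and invokes \cite[Ch II, Proposition 2.2]{Kollar2}, which says that for a well-defined family of proper $1$-cycles the property of having geometrically rational components passes from the generic point to \emph{every} point; since (after arranging flatness) every point of $B_R$ specializes from the generic fiber $B_K$, this gives condition (a) over all of $\Spec R$ with no constructibility input and no shrinking. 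You instead rest the argument on the claim that the locus $T\subseteq Y_R$ where the geometric fiber is connected with rational components is constructible. That is the one unproven step, and it is genuinely more delicate than ``standard semicontinuity'': having rational components is \emph{not} an open condition (a smooth elliptic curve can degenerate to a nodal rational curve, so geometric genus only drops under specialization), and what the literature hands you off the shelf is precisely specialization-closedness, not constructibility. Your proposed sidestep --- working with the Chow-type parameter space --- is exactly the paper's route, so the fix is to adopt it outright: replace the constructibility of $T$ by the specialization statement of \cite[Ch II, Proposition 2.2]{Kollar2}, after which condition (a) holds for every $\mathfrak p\in\Spec R$ and only condition (b) requires shrinking $\Spec R$ to a dense open.
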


\begin{proof}
By the assumption,
we can find a closed sub-scheme $B$ inside of $\mathrm{Chow}(X')$ such that
the double evaluation map
$U \times_B U \to X' \times X'$ of the corresponding family $U \to B$, as in Definition \ref{dfn.rationallyConnected}(1),
is surjective.
We can find $R$, an intermediate finitely generated $k$-algebra
$k \subset R \subset k'$ satisfying the subsequent condition: the map  $U \to B$ and
the double evaluation map are defined over $R$.
We then obtain $U_R\to B_R$. Further, we may assume that $B_R$ is irreducible.

Then we claim the following:
\begin{cl}\label{claim21}
For every scheme-theoretic point $\mathfrak p \in \mathrm{Spec}\,R$,
the family $U_{\mathfrak p} \to B_{\mathfrak p} $ is a family of $1$-cycles with geometrically rational components.  Furthermore, after shrinking $\Spec\, R$ if necessary,
for every scheme-theoretic point $\mathfrak p\in {\rm Spec} R$
the double evaluation map
$$U_{\mathfrak p} \times_{B_{\mathfrak p}} U_{\mathfrak p}\to X'_{\mathfrak p} \times_{\mathfrak p} X'_{\mathfrak p}$$
is surjective.
\end{cl}
\begin{proof}[Proof of Claim \ref{claim21}]
By \cite[Ch II, Proposition 2.2]{Kollar2},
it is sufficient to show that,
for the generic point $\xi$ of $B_R$,
$U_{\xi}$ is a $1$-cycle with geometrically rational components.  Set $K$ to be the field of fractions of $R$.
Since $U = U_K \times_K k' \to B = B_K \times_K k'$ is a family of $1$-cycles with geometrically rational components,
so is $U_K\to B_K$.
Thus, it is enough to prove that
the morphism
$$B_K=B_R\times_R K\to B_R$$
is dominant, which holds by construction.  This proves the first part of the claim.

For the second part of the claim, we argue similarly.  We know that $U \times_B U \to X' \times X'$ is surjective and hence $U_K \times_{B_K} U_K \to X' \times X'$ is also surjective.  But then the same holds for all $\mathfrak{p}$ in an open dense set of  $\Spec\, R$.
\end{proof}
Thus $X$ is also RCC.
\end{proof}

\section{Global F-regularity and surjective morphisms}\label{section-general-fiber}

This section is devoted to the proof of the following main theorem:

\begin{thm}\label{general fiber}
Let $f : X \to Y$ be a proper morphism from a normal scheme to an $F$-finite integral scheme of characteristic $p > 0$, such that  $f_* \mathcal{O}_X \cong \mathcal{O}_Y$. Further let $\Delta$ be an effective $\mathbb{Q}$-divisor on $X$, such that $(X, \Delta)$ is globally $F$-regular. Then there is a non-empty open  set $U \subseteq Y$, such that for every perfect point $y \in U$, $(X_y, \Delta_y)$ is globally $F$-regular.
 \end{thm}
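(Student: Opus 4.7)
The plan is to reduce the universally-quantified splitting condition in the definition of global $F$-regularity to the existence of a single ``test divisor'' on $X$, and then to verify that this single splitting restricts compatibly to a general fiber.

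The first step is to invoke the standard equivalent formulation of global $F$-regularity: for a normal $F$-finite quasi-projective pair $(Z,\Theta)$, $(Z,\Theta)$ is globally $F$-regular if and only if there exists an effective Cartier divisor $B$ with $Z \setminus \Supp B$ regular, together with an integer $e > 0$, such that the composition
$$\mathcal O_Z \to F^e_* \mathcal O_Z \hookrightarrow F^e_* \mathcal O_Z(\lceil (p^e-1)\Theta \rceil + B)$$
splits as an $\mathcal O_Z$-module map. Applied to $(X,\Delta)$, this produces an effective Cartier divisor $D$, an exponent $e_0$, and a splitting $\psi\colon F^{e_0}_* \mathcal O_X(\lceil (p^{e_0}-1)\Delta \rceil + D) \to \mathcal O_X$ of the corresponding Frobenius-composition map.

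Next, I would restrict this data to a general fiber. By generic flatness over $Y$, and by discarding fibers over which some component of $\Delta$ or of $D$ lies, I would find an open $U \subseteq Y$ over which the relevant sheaves are flat, every component of $\Delta$ and of $D$ dominates $Y$ (so that $\lceil (p^{e_0}-1)\Delta\rceil|_{X_y} = \lceil (p^{e_0}-1)\Delta_y\rceil$ and $D$ restricts sensibly), and the geometric fibers over $U$ are normal. Since the absolute Frobenius of $X$ restricts to the absolute Frobenius of $X_y$ for any perfect point $y$, and Frobenius pushforward commutes with flat base change, $\psi$ descends to an $\mathcal O_{X_y}$-module splitting
$$\psi_y\colon F^{e_0}_* \mathcal O_{X_y}(\lceil (p^{e_0}-1)\Delta_y \rceil + D|_{X_y}) \to \mathcal O_{X_y}$$
of the analogous Frobenius composition on $X_y$. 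Further, generic smoothness applied to the morphism $X \setminus \Supp D \to Y$ (after shrinking $U$) ensures that $X_y \setminus \Supp(D|_{X_y})$ is regular for every $y \in U$. Applying the criterion from the first step to $(X_y,\Delta_y)$ with test divisor $D|_{X_y}$ and splitting $\psi_y$ then yields global $F$-regularity on the fiber.

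The main technical obstacle will be verifying the compatibility in the middle step, namely that $\psi$ really descends to $\psi_y$. This requires care with flat base change for Frobenius pushforwards: because the absolute Frobenius is affine, its pushforward commutes with flat base change, so after suitably shrinking $U$ one can identify $F^{e_0}_* \mathcal O_X(\cdots)|_{X_y}$ with $F^{e_0}_* \bigl(\mathcal O_X(\cdots)|_{X_y}\bigr)$ and match this with the intrinsic Frobenius of $X_y$ using perfectness of the residue field. A secondary point is ensuring the single-divisor criterion applies to the fibers themselves, which needs $F$-finiteness of the residue field (granted by $F$-finiteness of $Y$ at perfect points) and normality of $X_y$ (which holds generically since $X$ is normal).
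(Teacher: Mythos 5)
Your overall skeleton (reduce to a single test divisor via \cite[Theorem 3.9]{SS}, then restrict the resulting splitting to general fibers) is the same as the paper's, but the step you yourself flag as ``the main technical obstacle'' is exactly where the argument breaks, and your proposed fix does not repair it. Restriction to a fiber $X_y \hookrightarrow X$ is a closed immersion, not a flat base change, so ``Frobenius pushforward commutes with flat base change'' does not apply. Concretely, if $I_y$ is the ideal of the fiber, then
$F^{e}_* \mathcal{O}_X(\lceil (p^{e}-1)\Delta\rceil + D) \otimes_{\cO_X} \cO_{X_y} \cong F^{e}_*\bigl(\mathcal{O}_X(\lceil(p^{e}-1)\Delta\rceil+D)\otimes \cO_X/I_y^{[p^{e}]}\bigr)$,
and its natural comparison with $F^{e}_* \mathcal{O}_{X_y}(\lceil(p^{e}-1)\Delta_y\rceil+D_y)$ is the surjection induced by $I_y^{[p^{e}]} \subseteq I_y$, going the wrong way: a splitting of the restricted map induces a splitting of the fiber's Frobenius only if it kills $F^{e}_*(I_y/I_y^{[p^{e}]})$, for which there is no reason. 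This is the standard fact that Frobenius splittings do not pass to subvarieties unless those are compatibly split. What does restrict correctly to fibers over perfect points is the \emph{relative} (or ``twisted relative'') Frobenius $\cO_X \otimes_{\cO_Y} \cO_Y^{1/p^{e}} \to \cO_X^{1/p^{e}}(\cdots)$: tensoring it over $\cO_Y^{1/p^{e}}$ with $k(y)^{1/p^{e}}$ quotients the target by $(\mathfrak{m}_y\cO_X)^{1/p^{e}}$ rather than by $I_y^{[p^{e}]}$, yielding exactly the Frobenius of $X_y$. Producing a splitting of this relative map from global $F$-regularity of $(X,\Delta)$ is the actual content of the theorem; the paper obtains it in Lemma \ref{lem.FSplitBaseChangeInsep}, Proposition \ref{prop:globally_F_regular_base_change} and Remark \ref{rem.GloballyFRegForSameE} by dualizing the factorization of $F_X^{e+a}$ through $X \times_k k^{1/p}$ and exploiting that $H^0$ of the base change is a field (separability of $H^0(X_K,\cO_{X_K}) \supseteq K$ for $K = K(Y)$), so that a non-zero evaluation map is automatically surjective. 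Your proposal contains no substitute for this.

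A secondary gap: you invoke generic smoothness of $X\setminus \Supp D \to Y$ to get regularity of $X_y \setminus \Supp D_y$. Generic smoothness fails in characteristic $p$. To choose a test divisor whose complement is smooth over $Y$ one must first know that the geometric generic fiber is reduced (in fact normal); the paper deduces this from Corollary \ref{cor.GloballyFRegBaseChange}, i.e., from the same $k\mapsto k^{1/p^{e}}$ base-change machinery that is missing from your argument. Both gaps therefore trace back to the same omission.
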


 \begin{rem}
 We make the following remarks on normality.  The hypothesis that $(X, \Delta)$ is globally $F$-regular implies that $X$ is globally $F$-regular and is hence normal.  It follows immediately that the general fiber is also normal.  The theorem then implies that the general geometric fibers are normal as well.
 \end{rem}

\begin{nota}
Throughout this section we only work with $F$-finite schemes and fields.  The reason for this is that the notion of $F$-splitting is much better behaved under this hypothesis.

Furthermore, and just in this section, we denote the $e$-th Frobenius pushforwards by using a $1/p^e$ exponent on the structure sheaf. I.e., instead of $F^e_* \mathcal{O}_X$ we write $\mathcal{O}_X^{1/p^e}$. The justification for this notation is that many isomorphisms such as $\mathcal{O}_X^{1/p^e} \otimes_{k^{1/p^e}} k^{1/p^{e+1}} \cong  \left(\mathcal{O}_X \otimes_{k} k^{1/p} \right)^{1/p^e}$ are much more apparent.
 \end{nota}

We first make the observation that $F$-split proper varieties remain $F$-split after base change of the base field.  We need one conceit that $H^0(X, \mathcal{O}_X) \supseteq k$ is a separable field extension.  Of course, if this is inseparable, then $H^0(X, \mathcal{O}_X \otimes_k k^{1/p}) \cong H^0(X, \mathcal{O}_X) \otimes_k k^{1/p}$ is clearly non-reduced so this additional hypothesis is certainly necessary.  Indeed, if $X$ is $F$-split and proper over $k$, with $H^0(X, \mathcal{O}_X) \supseteq k$ inseparable so that $k$ is non-perfect, then obviously $X \times_{k^p} k$ is not $F$-split.  Note that if we choose our field  $k = H^0(X, \mathcal{O}_X)$ then the separability hypothesis is automatic.

\begin{lem}
\label{lem.FSplitBaseChangeInsep}
Suppose that $k$ is an $F$-finite (but not necessarily perfect) field.  Further suppose that $X$ is a proper scheme over $k$ and $H^0(X, \mathcal{O}_X) =K$ is a separable field extension of $k$.  If $X$ is $F$-split, then every base change $X \times_k k^{1/p^e}$ is also $F$-split.  In particular $X$ is geometrically reduced \cite[Proposition 4.6.1.d]{EGA_IV_II}.
 \end{lem}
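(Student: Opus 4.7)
My plan is to interpret an $F$-splitting of $X$ as a section of a coherent cohomology group that commutes with the base change $\pi \colon X_e := X \times_k k^{1/p^e} \to X$, and then transport the given splitting. I will first reformulate: an $F$-splitting of $X$ is an element $\phi \in \Hom_{\mathcal{O}_X}(F_{X*}\mathcal{O}_X, \mathcal{O}_X)$ satisfying $\phi(1) = 1 \in H^0(X, \mathcal{O}_X) = K$; equivalently, $\phi$ is sent to $1$ under the $K$-linear evaluation map $\mathrm{ev}_X$. An $F$-splitting of $X_e$ is defined analogously, evaluating to $1 \in H^0(X_e, \mathcal{O}_{X_e})$. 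By flat base change along the proper morphism $X \to \Spec k$, we have $H^0(X_e, \mathcal{O}_{X_e}) \cong K \otimes_k k^{1/p^e} =: K_e$, and the separability hypothesis on $K/k$ ensures that $K_e$ is a field (since a separable and a purely inseparable extension of $k$ are linearly disjoint), so $1 \otimes 1 \in K_e$ is the unique multiplicative identity.

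The key technical step will be to establish a natural $K_e$-linear isomorphism
\[
\Hom_{\mathcal{O}_X}(F_{X*}\mathcal{O}_X, \mathcal{O}_X) \otimes_k k^{1/p^e} \;\cong\; \Hom_{\mathcal{O}_{X_e}}(F_{X_e*}\mathcal{O}_{X_e}, \mathcal{O}_{X_e})
\]
intertwining $\mathrm{ev}_X \otimes \mathrm{id}$ with $\mathrm{ev}_{X_e}$. This will be the main obstacle, because the naive comparison map $\pi^{\ast} F_{X*}\mathcal{O}_X \to F_{X_e*}\mathcal{O}_{X_e}$ is not an isomorphism in general (the source can even fail to be reduced when $k$ is non-perfect, while the target is always reduced). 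The correct construction should proceed through Grothendieck duality for the finite morphism $F_X$: one has $\Hom_{\mathcal{O}_X}(F_{X*}\mathcal{O}_X, \mathcal{O}_X) \cong H^0(X, F_X^! \mathcal{O}_X)$, and one then invokes flat base change for the exceptional pullback, $\pi^{\ast} F_X^! \mathcal{O}_X \cong F_{X_e}^! \mathcal{O}_{X_e}$, together with standard flat base change for cohomology along $\pi$, to obtain the desired isomorphism. Compatibility with evaluation at $1$ will then follow by naturality of the duality.

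Given the isomorphism, $\phi \otimes 1$ will correspond to some $\phi_e$ satisfying $\mathrm{ev}_{X_e}(\phi_e) = \phi(1) \otimes 1 = 1 \otimes 1 = 1 \in K_e$, so $\phi_e$ is an $F$-splitting of $X_e$, establishing the main claim.

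Finally, for the ``in particular'' claim: any $F$-split scheme is reduced, so $X \otimes_k k^{1/p^e}$ is reduced for every $e$; hence the direct limit $X \otimes_k k^{1/p^\infty}$ is reduced, which by \cite[Proposition 4.6.1.d]{EGA_IV_II} is equivalent to $X$ being geometrically reduced.
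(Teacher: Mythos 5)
Your reduction at the end (a non-zero evaluation into the field $K\otimes_k k^{1/p^e}$, which is a field precisely by the separability hypothesis, can be rescaled to hit $1$) matches the paper's use of that hypothesis. But the key technical step — the claimed natural isomorphism $\Hom_{\mathcal{O}_X}(F_{X*}\mathcal{O}_X,\mathcal{O}_X)\otimes_k k^{1/p^e}\cong \Hom_{\mathcal{O}_{X_e}}(F_{X_e*}\mathcal{O}_{X_e},\mathcal{O}_{X_e})$ obtained from Grothendieck duality plus ``flat base change for the exceptional pullback'' — has a genuine gap. Flat base change for $(-)^!$ requires a \emph{cartesian} square, and the square formed by $F_X$, $F_{X_e}$ and $\pi$ is commutative but not cartesian when $k$ is imperfect: $F_{X_e}$ is not the base change of $F_X$ along $\pi$. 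Concretely, $\pi^*F_{X*}\mathcal{O}_X=\mathcal{O}_X^{1/p}\otimes_k k^{1/p^e}$ while $F_{X_e*}\mathcal{O}_{X_e}=\mathcal{O}_X^{1/p}\otimes_{k^{1/p}}k^{1/p^{e+1}}$, and the natural comparison map between them is not even surjective (on the base, the image of $k^{1/p}\otimes_k k^{1/p^e}\to k^{1/p^{e+1}}$ is only $k^{1/p^e}$). One can also check directly for $X=\Spec k$ that no natural map intertwines the two evaluation maps: sending $\phi\otimes\lambda$ to $\lambda\cdot\phi^{1/p^e}$ evaluates to $\lambda\,\phi(1)^{1/p^e}$, not $\lambda\,\phi(1)$. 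So the object you would transport $\phi\otimes 1$ to is not available by the route you describe.

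The paper sidesteps this by not trying to base-change a splitting of a single Frobenius. Instead it factors the \emph{second} iterate $\mathcal{O}_X\to\mathcal{O}_X^{1/p^2}$ as $\mathcal{O}_X\to\mathcal{O}_X\otimes_k k^{1/p}\to(\mathcal{O}_X\otimes_k k^{1/p})^{1/p}\to\mathcal{O}_X^{1/p^2}$ and applies $\sHom_{\mathcal{O}_X}(\;,\mathcal{O}_X)$ to the whole chain; the identifications $\sHom_{\mathcal{O}_X}(\mathcal{O}_X\otimes_k k^{1/p},\mathcal{O}_X)\cong\mathcal{O}_X\otimes_k k^{1/p}$ (using $F$-finiteness of $k$) and adjunction exhibit the middle term as $\sHom_{\mathcal{O}_{X_1}}(\mathcal{O}_{X_1}^{1/p},\mathcal{O}_{X_1})$ for $X_1=X\times_k k^{1/p}$. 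The ``extra'' Frobenius absorbs exactly the discrepancy between $k^{1/p}\otimes_k k^{1/p}$ and $k^{1/p^2}$ that blocks your isomorphism: a splitting of $F^2_X$ produces a non-zero element of $\Hom_{\mathcal{O}_{X_1}}(\mathcal{O}_{X_1}^{1/p},\mathcal{O}_{X_1})$ evaluating non-trivially, and then your field argument finishes. If you want to repair your write-up, replace the asserted base-change isomorphism with this ``shift by one Frobenius'' mechanism (and induct on $e$); the rest of your outline then goes through.
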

\begin{proof}
By induction on $e > 0$, it suffices to show that $X \times_k k^{1/p}$ is Frobenius split.
We have the following composition of Frobenius maps.
\[
\mathcal{O}_X \to \mathcal{O}_X \otimes_k k^{1/p} \to \mathcal{O}_X^{1/p} \otimes_{k^{1/p}} k^{1/p^2} \to \mathcal{O}_X^{1/p^2}.
 \]
We apply the functor
$\sHom_{\mathcal{O}_X}(\ \ , \mathcal{O}_X)$ to obtain:
\[
\begin{array}{cc}
& \mathcal{O}_X \leftarrow \sHom_{\mathcal{O}_X}(\mathcal{O}_X \otimes_k k^{1/p}, \mathcal{O}_X) \xleftarrow{\beta} \sHom_{\mathcal{O}_X}(\mathcal{O}_X^{1/p} \otimes_{k^{1/p}} k^{1/p^2}, \mathcal{O}_X)\\
& \leftarrow \sHom_{\mathcal{O}_X}(\mathcal{O}_X^{1/p^2}, \mathcal{O}_X)
\end{array}
 \]
Notice first that $\sHom_{\mathcal{O}_X}(\mathcal{O}_X \otimes_k k^{1/p}, \mathcal{O}_X) \cong \mathcal{O}_X \otimes_k \sHom_{k}(k^{1/p}, k) \cong \mathcal{O}_X \otimes_k k^{1/p}$, here $k$ is technically $f^{-1} k$ where $f : X \to \Spec k$ is the structural map.  Likewise
 \[
\begin{array}{rl}
      & \sHom_{\mathcal{O}_X}(\mathcal{O}_X^{1/p} \otimes_{k^{1/p}} k^{1/p^2}, \mathcal{O}_X) \\
\cong & \sHom_{\mathcal{O}_X \otimes_k k^{1/p}}(\mathcal{O}_X^{1/p} \otimes_{k^{1/p}} k^{1/p^2}, \sHom_{\mathcal{O}_X}(\mathcal{O}_X \otimes_k k^{1/p},\mathcal{O}_X))\\
 \cong & \sHom_{\mathcal{O}_X \otimes_k k^{1/p}}(\mathcal{O}_X^{1/p} \otimes_{k^{1/p}} k^{1/p^2}, \mathcal{O}_X \otimes_k k^{1/p})\\
\cong & \sHom_{\mathcal{O}_X \otimes_k k^{1/p}}((\mathcal{O}_X \otimes_{k} k^{1/p})^{1/p}, \mathcal{O}_X \otimes_k k^{1/p}).
\end{array}
 \]
Since $X$ is $F$-split, the canonical map $ \Hom_{\mathcal{O}_X}(\mathcal{O}_X^{1/p^2}, \mathcal{O}_X) \to H^0(X, \mathcal{O}_X)$ is surjective.  This implies that the map
\begin{equation*}
H^0(\beta) : \Hom_{\mathcal{O}_X \otimes_k k^{1/p}}((\mathcal{O}_X \otimes_{k} k^{1/p})^{1/p}, \mathcal{O}_X \otimes_k k^{1/p}) \to H^0(X, \mathcal{O}_X \otimes_k k^{1/p})
\end{equation*}
 is non-zero.  Now $H^0(X, \mathcal{O}_X) \cong K \supseteq k$ is a separable field extension and so $H^0(X, \mathcal{O}_X \otimes_k k^{1/p}) = H^0(X, \mathcal{O}_X) \otimes_k k^{1/p} \cong K \otimes_k k^{1/p}$ is a field. Further note that $\beta$ is a $\mathcal{O}_X \otimes_k k^{1/p}$-module homomorphism, hence $H^0(\beta)$ is a $H^0(X, \mathcal{O}_X \otimes_k k^{1/p})$-module homomorphism.  Thus $H^0(\beta)$ is a non-zero homomorphism over a field with the target being a one-dimensional vector space. Hence $H^0(\beta)$ is surjective.   This proves that $X \times_k k^{1/p}$ is also $F$-split.
 \end{proof}

As a quick corollary, we also obtain that globally $F$-split varieties are geometrically globally $F$-split.

\begin{cor}
\label{cor.BaseChangeOfFSplit}
Suppose that $X$ is a proper $F$-split variety over an $F$-finite field $k$ such that $H^0(X, \mathcal{O}_X) \supseteq k$ is a separable field extension.  Then $X$ is geometrically $F$-split.  In other words, for every field  extension $k \subseteq K$, for $K$ also $F$-finite, $X \times_k K$ is $F$-split.
 \end{cor}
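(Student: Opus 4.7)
The plan is to prove the corollary by a direct adaptation of the proof of Lemma~\ref{lem.FSplitBaseChangeInsep}, with the $F$-finite extension $K$ replacing $k^{1/p}$ throughout. Concretely, I would consider the analogue of the chain from that lemma, namely
\[
\mathcal{O}_X \to \mathcal{O}_X \otimes_k K \to \mathcal{O}_X^{1/p} \otimes_{k^{1/p}} K^{1/p} \to \mathcal{O}_X^{1/p^2} \otimes_{k^{1/p^2}} K^{1/p^2},
\]
whose composition factors through the absolute Frobenius $F^2 : \mathcal{O}_X \to \mathcal{O}_X^{1/p^2}$. Applying $\sHom_{\mathcal{O}_X}(-, \mathcal{O}_X)$ and running the same adjunction identifications as in the lemma's proof, one identifies the resulting $\sHom$ on the intermediate term with $\sHom_{\mathcal{O}_{X_K}}(F_{X_K,*}\mathcal{O}_{X_K}, \mathcal{O}_{X_K})$, producing the analogue of the map $H^0(\beta)$.

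Since $X$ is $F$-split it is also $F^2$-split, so $\Hom_{\mathcal{O}_X}(\mathcal{O}_X^{1/p^2}, \mathcal{O}_X) \twoheadrightarrow H^0(X, \mathcal{O}_X)$ is surjective. Tracing through the diagram, the image of the analogue of $H^0(\beta)$, landing in $H^0(X_K, \mathcal{O}_{X_K}) = H^0(X, \mathcal{O}_X) \otimes_k K$, will contain the subset $H^0(X, \mathcal{O}_X) \otimes 1$; in particular, it contains $1 \otimes 1 = 1$, which is exactly what is needed for $F^2$-splitness of $X_K$ (and hence $F$-splitness, by composing with one further Frobenius). The separability hypothesis on $H^0(X, \mathcal{O}_X)/k$ is used precisely to ensure that $H^0(X_K, \mathcal{O}_{X_K})$ is reduced (a finite product of field extensions of $K$), so that $X_K$ is a normal scheme on which $F$-splitness is defined.

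The main obstacle will be verifying that the tensor-Hom adjunctions and module identifications used in the lemma's proof still go through for a general $F$-finite $K$. In the lemma these were streamlined by the fact that $H^0(X, \mathcal{O}_X) \otimes_k k^{1/p}$ is a single field, so that ``non-zero homomorphism into a field'' automatically upgrades to surjectivity. In the general case the target is only a product of fields, so one must instead use the stronger conclusion that $1$ is specifically in the image (rather than merely showing the image is non-zero). Once this replacement is in place the rest of the argument is essentially formal.
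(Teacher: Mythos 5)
Your plan is not the paper's proof, and as written it has a genuine gap. The paper does not rerun the dualization argument of Lemma~\ref{lem.FSplitBaseChangeInsep} with $K$ in place of $k^{1/p}$; instead it extracts from that lemma an explicit splitting homomorphism $\phi : \mathcal{O}_X^{1/p} \to \mathcal{O}_X \otimes_k k^{1/p}$ sending $1$ to $1$ (by restricting the splitting of the Frobenius of $X \times_k k^{1/p}$ along $\mathcal{O}_X^{1/p} \hookrightarrow (\mathcal{O}_X \otimes_k k^{1/p})^{1/p}$), applies the exact functor $- \otimes_{k^{1/p}} K^{1/p}$ to $\phi$, and composes with a retraction of $\mathcal{O}_X \otimes_k K \to \mathcal{O}_X \otimes_k K^{1/p}$ coming from a $K$-linear splitting of $K \to K^{1/p}$. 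Tensoring a homomorphism commutes with arbitrary base change, so this works for every $F$-finite $K$ with no further analysis.

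Your route breaks down at exactly the two places you wave at. First, the ``adjunction identifications'' do not go through for general $K$: the isomorphism $\sHom_{\mathcal{O}_X}(\mathcal{O}_X \otimes_k K, \mathcal{O}_X) \cong \mathcal{O}_X \otimes_k \sHom_k(K,k) \cong \mathcal{O}_X \otimes_k K$ used in the lemma requires $K$ to be a finite $k$-module (for infinite extensions, e.g. $K = \overline{k}$, $\sHom$ out of an infinite direct sum is an infinite product, and $\sHom_k(K,k) \not\cong K$ as $K$-modules). Without it, your $H^0(\beta)$ is not the evaluation map for the Frobenius of $X_K$, so the whole chase is computing the wrong thing. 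Second, even for finite $K/k$ where the identifications are valid, the assertion that ``tracing through the diagram'' shows the image of $H^0(\beta)$ contains $H^0(X,\mathcal{O}_X) \otimes 1$, hence $1$, is precisely the statement to be proved and is not supplied by the diagram. What the diagram gives is that the image --- an ideal $I$ of $L \otimes_k K$, where $L = H^0(X,\mathcal{O}_X)$ --- surjects onto $L$ under a map of the form $\mathrm{id}_L \otimes \lambda$ for some $k$-linear functional $\lambda : K \to k$. When $L \otimes_k K$ is only a product of fields rather than a field, a proper ideal (a subproduct of the factors) can perfectly well surject onto $L$ under such a functional, so $I$ need not be the unit ideal. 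The lemma's argument upgraded ``nonzero'' to ``surjective'' only because $L \otimes_k k^{1/p}$ is a single field; once that fails you need a different mechanism to put $1$ in the image, and the mechanism the paper uses is precisely to transport an explicit splitting sending $1$ to $1$ rather than to dualize.
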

\begin{proof}
Since $\mathcal{O}_X \otimes_k k^{1/p} \to \big( \mathcal{O}_X \otimes_k k^{1/p} \big)^{1/p}$ is split, so is $\mathcal{O}_X \otimes_k k^{1/p} \to \mathcal{O}_X^{1/p}$ since the latter factors the former.  Thus there is a map $\phi : \mathcal{O}_X^{1/p} \to \mathcal{O}_X \otimes_k k^{1/p}$ which sends $1$ to $1$.  It follows that $$(\phi \otimes_{k^{1/p}} K^{1/p}) : \mathcal{O}_X^{1/p} \otimes_{k^{1/p} } K^{1/p} \cong (\mathcal{O}_X \otimes_k K)^{1/p} \to \mathcal{O}_X \otimes_k K^{1/p}$$ also sends $1$ to $1$.
 But obviously since $K \to K^{1/p}$ is split, so is the map $\mathcal{O}_X \otimes_k K \to \mathcal{O}_X \otimes_k K^{1/p}$ and so there is an $\cO_X \otimes_k K$-linear map
 \[
 \beta : \mathcal{O}_X \otimes_k K^{1/p} \to \mathcal{O}_X \otimes_k K
 \]
  which also sends $1$ to $1$.  We compose and so obtain:
 \[
(\mathcal{O}_X \otimes_k K)^{1/p} \xrightarrow{(\phi \otimes_{k^{1/p}} K^{1/p})} \mathcal{O}_X \otimes_k K^{1/p} \xrightarrow{\beta}  \mathcal{O}_X \otimes_k K.
\]
This implies that $X \times_k K$ is $F$-split as desired.
%
 %
 \end{proof}

The previous method also extends to globally $F$-regular varieties.

\begin{prop}
\label{prop:globally_F_regular_base_change}
Suppose that $k$ is an $F$-finite (but not necessarily perfect) field. Further suppose that $X$ is a proper normal variety over $k$, $\Delta$ an effective $\mathbb{Q}$-divisor on $X$ 
and $H^0(X, \mathcal{O}_X) =K$ is a separable field extension of $k$.  If $(X,\Delta)$ is globally $F$-regular, then every base change $(X \times_k k^{1/p^e}, \Delta \times_k k^{1/p^e})$ is also globally  $F$-regular and in particular normal. 
 \end{prop}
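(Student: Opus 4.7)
The strategy is to adapt the Hom-adjunction argument of Corollary \ref{cor.BaseChangeOfFSplit} from the $F$-split setting to the $F$-regular setting by tracking the divisor $\Delta$. First I would reduce to the case $e = 1$ by induction. The separability hypothesis propagates: if $K = H^0(X, \mathcal{O}_X) \supseteq k$ is a separable field extension, then $K \otimes_k k^{1/p}$ is a finite product of separable field extensions of $k^{1/p}$, so the hypothesis continues to hold on each connected component of $X' := X \times_k k^{1/p}$. Normality of $X'$ will follow from global $F$-regularity a posteriori via \cite[Corollary 5.11]{HHbs}; meanwhile $X'$ is G1 and S2 by base change, so the generalized definition of global $F$-regularity applies even before normality is known.

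To verify global $F$-regularity of $(X', \Delta')$, let $E'$ be an arbitrary effective Cartier divisor on $X'$. Choose an ample Cartier divisor $H$ on $X$; then $H_{X'}$ is ample on $X'$, and for $n \gg 0$ the sheaf $\mathcal{O}_{X'}(nH_{X'} - E')$ acquires a nonzero global section, giving an effective Cartier divisor $D'$ with $E' + D' \sim nH_{X'}$. Because a splitting of a composition $\mathcal{O}_{X'} \to A \hookrightarrow B$ descends to a splitting of $\mathcal{O}_{X'} \to A$, it suffices to produce, for some $e' \geq 0$, an $\mathcal{O}_{X'}$-linear splitting of
$$\mathcal{O}_{X'} \to \mathcal{O}_{X'}^{1/p^{e'+1}}\bigl(\lceil (p^{e'+1} - 1)\Delta' \rceil + nH_{X'}\bigr).$$

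Global $F$-regularity of $(X, \Delta)$ applied to $nH$ furnishes such an $e'$ and an $\mathcal{O}_X$-linear splitting $\phi$ of $\mathcal{O}_X \to \mathcal{O}_X^{1/p^{e'}}(\lceil (p^{e'}-1)\Delta \rceil + nH)$. I would then run the argument from the proof of Corollary \ref{cor.BaseChangeOfFSplit} verbatim, with the target sheaf $\mathcal{O}_X$ replaced everywhere by the twisted sheaf $\mathcal{O}_X^{1/p^{e'}}(\lceil (p^{e'}-1)\Delta \rceil + nH)$, and applied to the composite Frobenius
$$\mathcal{O}_X \to \mathcal{O}_X \otimes_k k^{1/p} \to \mathcal{O}_X^{1/p^{e'}}\bigl(\lceil (p^{e'}-1)\Delta \rceil + nH\bigr) \otimes_{k^{1/p^{e'}}} k^{1/p^{e'+1}} \hookrightarrow \mathcal{O}_X^{1/p^{e'+1}}\bigl(\lceil (p^{e'+1}-1)\Delta \rceil + pnH\bigr).$$
Applying $\sHom_{\mathcal{O}_X}(-, \mathcal{O}_X)$ and using $\phi$ together with the identification $\sHom_{\mathcal{O}_X}(\mathcal{O}_X \otimes_k k^{1/p}, \mathcal{O}_X) \cong \mathcal{O}_X \otimes_k k^{1/p} \cong \mathcal{O}_{X'}$ yields the desired splitting on $X'$. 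The main obstacle I foresee is the divisor bookkeeping: one must verify that rounding up commutes with the purely inseparable base change $k \to k^{1/p}$, i.e., that the pullback of $\lceil (p^{e'+1} - 1)\Delta \rceil$ to $X'$ agrees with $\lceil (p^{e'+1} - 1)\Delta' \rceil$ computed directly on $X'$. This reduces, prime component by prime component, to each irreducible divisor in $\Delta$ base-changing to a divisor of multiplicity one on $X'$, which is secured in codimension one by the geometric reducedness of $X$ that follows from the separability of $K/k$ (Lemma \ref{lem.FSplitBaseChangeInsep}).
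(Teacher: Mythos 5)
Your overall architecture (induct on $e$, dualize the composite Frobenius with $\sHom_{\mathcal{O}_X}(-,\mathcal{O}_X)$, identify $\sHom_{\mathcal{O}_X}(\mathcal{O}_X\otimes_k k^{1/p},\mathcal{O}_X)\cong\mathcal{O}_X\otimes_k k^{1/p}$, and use separability to make the nonzero map surjective) is exactly the paper's, but two steps in your reduction are genuinely broken. First, the reduction to a single splitting: you claim that to handle an arbitrary effective Cartier divisor $E'$ on $X'$ it suffices to split $\mathcal{O}_{X'}\to\mathcal{O}_{X'}^{1/p^{e'+1}}(\lceil(p^{e'+1}-1)\Delta'\rceil+nH_{X'})$ because $E'+D'\sim nH_{X'}$. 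But a splitting along a divisor $G$ only descends to divisors $G'\le G$; the inclusion $\mathcal{O}_{X'}\hookrightarrow\mathcal{O}_{X'}(G)$ depends on $G$ itself, not on its linear equivalence class, and $nH_{X'}-E'$ is merely linearly equivalent to an effective divisor, not effective. Since $E'$ lives on $X'$ and is in no way a pullback from $X$, no splitting datum transported from $X$ reaches it this way. The paper's fix is \cite[Theorem 3.9]{SS}: fix one divisor $D$ on $X$ containing $\mathrm{Supp}\,\Delta$ with $X\setminus D$ affine and smooth, prove the splitting along the pullback $h^*D$ only, and let local strong $F$-regularity of the regular complement absorb every other $E'$. (Your ample $H$ is also unavailable in general: $X$ is only assumed proper.)

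Second, the rounding issue you flag is real but your resolution is wrong. Geometric reducedness of $X$ does not imply that a prime component of $\Delta$ pulls back to a reduced divisor under $k\to k^{1/p}$: for instance $V(x^p-ty^p)\subset\mathbb{A}^2_{\mathbb{F}_p(t)}$ is prime on a smooth variety yet becomes $p$ times a prime divisor over $\mathbb{F}_p(t)^{1/p}$, so $\lceil(p^e-1)\Delta\rceil$ need not pull back to $\lceil(p^e-1)\Delta'\rceil$. The paper sidesteps this entirely by first enlarging $\Delta$ so that $(p^a-1)\Delta$ is a $\mathbb{Z}$-divisor (\cite[Proposition 3.12]{SS}) and then restricting attention to exponents $e$ divisible by $a$, so that no rounding ever occurs and the pullback is just the flat pullback of an integral divisor. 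Both repairs are needed before your dualization step can be run.
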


\begin{proof}
By \cite[Proposition 3.12]{SS}, and making $\Delta$ larger if necessary, we may assume that $(p^a -1)\Delta$ is a $\mathbb{Z}$-divisor where $a$ is a positive integer.
By Lemma \ref{lem.FSplitBaseChangeInsep},
$X$ is geometrically reduced over $k$, hence there is an effective divisor $D > 0$ on $X$ containing the support of $\Delta$ such that $X \setminus D$ is affine and smooth over $k$. Since $X$ is normal, it is geometrically G1 and S2, hence we can talk about divisors (or at worst Weil-divisorial sheaves) on base changes of $X$.
 We introduce the notation $\sR:=\mathcal{O}_X \otimes_{k} k^{1/p}$ and factor
\[
\xymatrix{X \ar@/_2pc/[rr]_{F_X^a} \ar[r]^-g & X' = \Spec_X \sR \ar[r]^-h & X}
\]
which is written on the structure sheaves as:
\[
\cO_X^{1/p^{a}} \leftarrow \cO_{X'} \cong \cO_X \otimes_k k^{1/p} = \sR \leftarrow \cO_X.
\]
\vskip 1pt
 By induction on $e > 0$, the fact that $\Spec \sR$ is $X$ as a topological space, and \cite[Theorem 3.9]{SS}, it is enough to show that there is an integer  $e > 0$ with $a|e$ and so the following  natural inclusion is split:
 \begin{equation*}
\sR \to \Big(\sR \left( h^* (p^e -1) \Delta  +  h^* D  \right)\Big)^{1/p^e}.
\end{equation*}
Note that we may have to treat $h^* (p^e -1) \Delta  +  h^* D$ as an effective Weil divisorial sheaf which is Cartier in codimension one, since we do not know that $\sR$ is normal yet.
Regardless, we have the following composition of Frobenius maps.
\begin{equation}
\label{eq.GlobFRegMapToDualize}
\begin{array}{rcl}
 \mathcal{O}_X & \to & \sR \\
& \to & \Big(\sR  \left( h^*(p^e -1) \Delta   +  h^* D  \right) \Big)^{1/p^e}\\
& \to & \Big(\mathcal{O}_X\left(   (p^{e+a} -1) \Delta +  p^a D  \right)\Big)^{1/p^{e+a}}
\end{array}
\end{equation}
Here we used that $g^* h^* D = (F_X^a)^* D = p^aD$ and that
 \begin{equation*}
g^* h^* (p^e -1) \Delta  \cong (F_X^a)^* (p^e -1) \Delta  = (p^{e+a} -p^a) \Delta  \leq (p^{e+a} -1) \Delta
\end{equation*}
We now argue as before and apply the functor
$\sHom_{\mathcal{O}_X}(\blank, \mathcal{O}_X)$ to \eqref{eq.GlobFRegMapToDualize}
and obtain:
 \begin{multline*}
\mathcal{O}_X \leftarrow \sHom_{\mathcal{O}_X} \left(\sR, \mathcal{O}_X \right) \\
\xleftarrow{\beta} \sHom_{\mathcal{O}_X} \left( \Big(\sR  \left( h^*(p^e -1) \Delta + h^* D  \right)\Big)^{1/p^e}, \mathcal{O}_X \right)
\\ \leftarrow  \sHom_{\mathcal{O}_X} \left( \Big(\mathcal{O}_X\left(   (p^{e+a} -1) \Delta  +  p^a D  \right)\Big)^{1/p^{e+a}}, \mathcal{O}_X \right)
 \end{multline*}
Notice that
\[
\begin{array}{rl}
& \sHom_{\mathcal{O}_X}(\sR, \mathcal{O}_X) \\
= & \sHom_{\mathcal{O}_X}(\mathcal{O}_X \otimes_k k^{1/p}, \mathcal{O}_X) \\
\cong & \mathcal{O}_X \otimes \sHom_{k}(k^{1/p}, k) \\
\cong & \mathcal{O}_X \otimes_k k^{1/p}\\
 = & \sR,
\end{array}
\]
 here $k$ is technically $f^{-1} k$ where $f : X \to \Spec k$ is the structural map.  Likewise
 \[
\begin{array}{rl}
      & \sHom_{\mathcal{O}_X} \left( \Big(\sR  \left( h^* (p^e -1) \Delta +  h^* D  \right)\Big)^{1/p^e}, \mathcal{O}_X \right) \\
\cong & \sHom_{\sR} \left(\Big(\sR  \left( h^*(p^e -1) \Delta +  h^* D  \right)\Big)^{1/p^e}, \sHom_{\mathcal{O}_X}(\sR, \mathcal{O}_X) \right)\\
 \cong & \sHom_{\sR} \left( \Big(\sR  \left( h^* (p^e -1) \Delta +  h^* D  \right)\Big)^{1/p^e}, \sR \right)\\
\end{array}
\]
and so we identify $\beta$ with
\[
\sHom_{\sR} \left( \Big(\sR  \left( h^* (p^e -1) \Delta +  h^* D  \right)\Big)^{1/p^e}, \sR \right) \xrightarrow{\beta} \sR.
\]
Since $(X,\Delta)$ is globally $F$-regular, the canonical map
\begin{equation*}
 \Hom_{\mathcal{O}_X} \left( \left(\mathcal{O}_X \left(  (p^{e} -1) \Delta   +  D \right)\right)^{1/p^{e}}, \mathcal{O}_X \right) \to H^0(X, \mathcal{O}_X)
\end{equation*}
 is surjective for some $e > 0$ for which $a | e$ \cite[Proposition 3.8.a]{SS}.  But then since $(X, \Delta)$ itself is globally $F$-regular and so globally $F$-split, we have a splitting: $\left(\mathcal{O}_X \left(  (p^{a} -1) \Delta \right)\right)^{1/p^{a}} \to \mathcal{O}_X$.  Twisting by the divisor $(p^e - 1)\Delta + D$, reflexifying, and taking $p^e$th roots, we obtain a splitting:
  \[
 \left(\mathcal{O}_X \left(  (p^{e+a} - 1)\Delta+ p^a D\right)\right)^{1/p^{e+a}} \to \big(\mathcal{O}_X((p^e - 1)\Delta + D)\big)^{1/p^e}.
 \]
 We compose this with a splitting of $\mathcal{O}_X \to \left(\mathcal{O}_X \left(  (p^{e} -1) \Delta   +   D \right)\right)^{1/p^{e}}$ to obtain a splitting of $\mathcal{O}_X \to  \left(\mathcal{O}_X \left(  (p^{e+a} -1) \Delta   +  p^a D \right)\right)^{1/p^{e+a}}$.  Therefore,
\begin{equation}
\label{eq.nonZeroMap}
 \Hom_{\mathcal{O}_X} \left( \left(\mathcal{O}_X \left(  (p^{e+a} -1) \Delta   +  p^a D \right)\right)^{1/p^{e+a}}, \mathcal{O}_X \right) \to H^0(X, \mathcal{O}_X)
\end{equation}
 is surjective.
This implies that the map through which \eqref{eq.nonZeroMap} factors,
\begin{equation*}
H^0(\beta) : \Hom_{\sR}\left( \Big(\sR  \left( h^* (p^e -1) \Delta +  h^* D  \right)\Big)^{1/p^e}, \sR \right) \to H^0(X, \sR)
 \end{equation*}
is non-zero.

Just as before, $H^0(X, \mathcal{O}_X) \cong K \supseteq k$ is a separable field extension and so
\[
H^0(X, \sR) = H^0(X, \mathcal{O}_X \otimes_k k^{1/p}) = H^0(X, \mathcal{O}_X) \otimes_k k^{1/p} \cong K \otimes_k k^{1/p}
\]
is a field.  Further note that $\beta$ is an $\sR$-module homomorphism, hence $H^0(\beta)$ is a $H^0(X, \sR)$-module homomorphism.  Thus $H^0(\beta)$ is a non-zero homomorphism over a field with the target being a one-dimensional vector space. Hence $H^0(\beta)$ is surjective.   This proves that the pair $(X \times_k k^{1/p}, \Delta \times_k k^{1/p})$ is globally $F$-regular.
 \end{proof}

\begin{rem}
\label{rem.GloballyFRegForSameE}
Indeed, the proof of Proposition \ref{prop:globally_F_regular_base_change}
 in fact shows something stronger.  If $D> 0$ is a divisor and $\mathcal{O}_X \to (\mathcal{O}_X( (p^e - 1) \Delta + D))^{1/p^e}$ splits for some $e$ divisible by $a$, then $\sR \to \Big(\sR  \left( h^* (p^e -1) \Delta +  h^* D  \right)\Big)^{1/p^e}$ also splits for the same $e > 0$.
 \end{rem}

\begin{cor}
\label{cor.GloballyFRegBaseChange}
Suppose that $(X, \Delta)$ is a proper, globally $F$-regular variety over an $F$-finite field $k$ such that $H^0(X, \mathcal{O}_X) \supseteq k$ is a separable field extension.  Then $(X,\Delta)$ is geometrically globally $F$-regular.  In other words, for every field extension $k \subseteq K$ such that $K$ is also $F$-finite, we have that $(X \times_k K, \Delta \times_k K)$ is globally $F$-regular.
 \end{cor}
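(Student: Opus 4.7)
The plan is to mimic the proof of Corollary \ref{cor.BaseChangeOfFSplit}, now working with a single splitting of the form appearing in the definition of global $F$-regularity, and appealing to \cite[Theorem 3.9]{SS} (exactly as in the proof of Proposition \ref{prop:globally_F_regular_base_change}) to reduce the infinite family of splittings in the definition of global $F$-regularity to that single test splitting.

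First, since $X$ is geometrically reduced by Lemma \ref{lem.FSplitBaseChangeInsep}, I would choose an effective Cartier divisor $D$ on $X$ containing $\Supp \Delta$ such that $X \setminus D$ is affine and smooth over $k$. Then $X_K \setminus D_K$ is smooth over $K$ by base change, and so by \cite[Theorem 3.9]{SS} it is enough to find a single integer $e > 0$ such that the natural inclusion
\[
\cO_{X_K} \hookrightarrow \bigl(\cO_{X_K}((p^e - 1)\Delta_K + D_K)\bigr)^{1/p^e}
\]
splits as an $\cO_{X_K}$-module map. The global $F$-regularity of $(X, \Delta)$ provides such an $e$ (which we may assume divisible by the integer $a$ from the proof of Proposition \ref{prop:globally_F_regular_base_change}) together with an $\cO_X$-linear retraction $\psi_0$ of $\cO_X \hookrightarrow \bigl(\cO_X((p^e - 1)\Delta + D)\bigr)^{1/p^e}$. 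Iterating Remark \ref{rem.GloballyFRegForSameE}, first on $X$, then on the globally $F$-regular base change $X_{k^{1/p}}$ furnished by Proposition \ref{prop:globally_F_regular_base_change}, and so on (the separability of $H^0$ over the base is preserved under each purely inseparable step), upgrades $\psi_0$ to a retraction $\psi$ of the analogous inclusion $\cO_X \otimes_k k^{1/p^e} \hookrightarrow \bigl((\cO_X((p^e - 1)\Delta + D)) \otimes_k k^{1/p^e}\bigr)^{1/p^e}$ on $X \times_k k^{1/p^e}$ for the same $e$.

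Next, as in the proof of Corollary \ref{cor.BaseChangeOfFSplit}, I would factor this inclusion as
\[
\cO_X \otimes_k k^{1/p^e} \xrightarrow{\ \alpha\ } \bigl(\cO_X((p^e - 1)\Delta + D)\bigr)^{1/p^e} \xrightarrow{\ \beta\ } \bigl((\cO_X((p^e - 1)\Delta + D)) \otimes_k k^{1/p^e}\bigr)^{1/p^e},
\]
where $\beta$ is $x \mapsto x \otimes 1$ and $\alpha$ comes from the natural inclusions $\cO_X \hookrightarrow \bigl(\cO_X((p^e-1)\Delta+D)\bigr)^{1/p^e}$ and $k^{1/p^e} \hookrightarrow \bigl(\cO_X((p^e-1)\Delta+D)\bigr)^{1/p^e}$. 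Then $\phi := \psi \circ \beta$ is an $(\cO_X \otimes_k k^{1/p^e})$-linear retraction of $\alpha$. Applying $- \otimes_{k^{1/p^e}} K^{1/p^e}$ to $\phi$ and invoking the natural identification
\[
\bigl(\cO_X((p^e - 1)\Delta + D)\bigr)^{1/p^e} \otimes_{k^{1/p^e}} K^{1/p^e} \;\cong\; \bigl(\cO_{X_K}((p^e - 1)\Delta_K + D_K)\bigr)^{1/p^e}
\]
produces a retraction $\phi_K$ of $\cO_X \otimes_k K^{1/p^e} \hookrightarrow \bigl(\cO_{X_K}((p^e - 1)\Delta_K + D_K)\bigr)^{1/p^e}$.

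Finally, because $K$ is $F$-finite, $K^{1/p^e}$ is a finite-dimensional $K$-vector space containing $K$ as a direct summand, so $K \hookrightarrow K^{1/p^e}$ has a $K$-linear retraction; this extends $\cO_X$-linearly to a retraction of $\cO_{X_K} = \cO_X \otimes_k K \hookrightarrow \cO_X \otimes_k K^{1/p^e}$, and composing with $\phi_K$ yields the sought retraction of $\cO_{X_K} \hookrightarrow \bigl(\cO_{X_K}((p^e - 1)\Delta_K + D_K)\bigr)^{1/p^e}$, completing the proof by the initial reduction. The main obstacle I expect is the careful bookkeeping of Frobenius pushforwards, base change, and twists by effective divisors over a non-perfect field: justifying the factorization and the identification in the base change step, and making sure that the divisorial sheaves on the possibly non-normal $X_K$ are handled in the G1 and S2 framework already adopted in this section.
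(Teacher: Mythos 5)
Your proposal is correct and follows essentially the same route as the paper's proof: it uses Remark \ref{rem.GloballyFRegForSameE} (via Proposition \ref{prop:globally_F_regular_base_change}) to obtain a splitting over $k^{1/p^e}$, restricts it to a retraction of $\cO_X \otimes_k k^{1/p^e} \to \bigl(\cO_X((p^e-1)\Delta + D)\bigr)^{1/p^e}$, base changes along $k^{1/p^e} \subseteq K^{1/p^e}$, composes with a splitting of $K \to K^{1/p^e}$, and concludes by \cite[Theorem 3.9]{SS}. The only differences are cosmetic: you make explicit the appeal to \cite[Theorem 3.9]{SS} and the iteration of the remark over the purely inseparable tower, both of which the paper leaves implicit.
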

\begin{proof}The proof is essentially the same as Corollary \ref{cor.BaseChangeOfFSplit}.
We use the notation from the proof of Proposition \ref{prop:globally_F_regular_base_change},
in particular $D$ is a divisor whose support contains the support of $\Delta$ and such that $X \setminus D$ is affine and smooth over $k$ and $e$ is such that $(p^e - 1)\Delta$ is integral.  We also set $\sR_e = \mathcal{O}_X \otimes_k k^{1/p^e}$ and $h_e : \Spec \sR_e \to X$ to be the projection. 
Observe that
 \begin{equation}
\label{eq.SplitForNiceE}
\sR_e  \to \big( \sR_e( h^* (p^e - 1)\Delta + h_e^* D \big)^{1/p^e}
 \end{equation}
is split for some $e > 0$ by Remark \ref{rem.GloballyFRegForSameE}.

 But then $\sR_e \to \big(\mathcal{O}_X((p^e - 1)\Delta + D)\big)^{1/p^e}$ is also split since it factors \eqref{eq.SplitForNiceE}.
   Thus there is a map $\phi : \big(\mathcal{O}_X((p^e - 1)\Delta + D)\big)^{1/p^e} \to \sR_e$ which sends $1$ to $1$.  It follows that
 \[
\begin{array}{rcl}
(\phi \otimes_{k^{1/p^e}} K^{1/p^e}) & : & \big(\mathcal{O}_X((p^e - 1)\Delta + D)\big)^{1/p^e} \otimes_{k^{1/p^e} } K^{1/p^e} \\
& \cong & (\mathcal{O}_X((p^e - 1)\Delta + D) \otimes_k K)^{1/p^e} \\
 & \to & \sR_e \otimes_{k^{1/p^e}} K^{1/p^e}
\end{array}
\]
also sends $1$ to $1$.
But obviously since $K \to K^{1/p^e}$ is split, so is the map $\mathcal{O}_X \otimes_k K \to \mathcal{O}_X \otimes_k K^{1/p^e}$ and so there is a map $\gamma : \mathcal{O}_X \otimes_k K^{1/p^e} \to \mathcal{O}_X \otimes_k K$ which also sends $1$ to $1$.  We compose and so obtain:
 \[
 (\mathcal{O}_X((p^e - 1)\Delta + D) \otimes_k K)^{1/p^e}  \xrightarrow{(\phi \otimes_{k^{1/p^e}} K^{1/p^e})} \mathcal{O}_X \otimes_k K^{1/p^e} \xrightarrow{\gamma}  \mathcal{O}_X \otimes_k K.
\]
This implies that $(X \times_k K, \Delta \times_k K)$ is globally $F$-regular as desired.
 \end{proof}

 \begin{rem}
In the case that $X$ is additionally projective, we believe it is possible to prove the previous corollary with a variant of the following argument.  Let $R$ denote the section ring of $X$ with respect to some ample divisor and let $S$ denote the section ring of $X \times_k K$ with respect to the pullback of the same ample divisor.  We then have a map $R \to S$.  Note that $H^0(X, \cO_X) \otimes_k K$ is the spectrum of the fiber of $\Spec S$ over cone point of $\Spec R$.  In particular, the fiber is regular.  At this point, we can apply a variant of the argument of \cite[Lemma 4.5]{SZ}, 
which is a generalization of \cite[Section 7]{HHbc}, 
use the strong $F$-regularity of $(R, \Delta_R)$ to conclude the strong $F$-regularity of $(S, \Delta_S)$.\end{rem}
Now we come to the proof of our main theorem for this section.

\begin{proof}[Proof of Theorem \ref{general fiber}]
Set $\eta$ to be the generic point of $Y$ with residue field $K$.  Since $(X, \Delta)$ is globally $F$-regular, so is $(X_{K}, \Delta_{K})$ where $X_{K} = X \times_Y \Spec K$ and $\Delta_K$ is the pullback of $\Delta$ along the flat map $X_K \to X$.  Notice that since $f_* \cO_X = \cO_Y$, we have that $f_* \cO_{X_{K}} = K$.  In particular, it immediately follows from Corollary \ref{cor.GloballyFRegBaseChange} that $(X_{K}, \Delta_{K})$ is geometrically globally $F$-regular over $K$ and is hence geometrically normal (so that the general fibers are normal).  Choose $A$ an effective divisor on $X$ containing the support of $\Delta$ such that $X \setminus A$ is affine and smooth over $Y$ ($A$ can be relatively ample if $X \to Y$ is projective).  Using Proposition \ref{prop:globally_F_regular_base_change} and Remark \ref{rem.GloballyFRegForSameE}, we know that for some $e > 0$, the map
\[
\begin{array}{rl}
& \cO_{X_{K^{1/p^e}}} \\
= & \cO_{X_K} \otimes_K K^{1/p^e} \\
\to & \cO_{X_K}^{1/p^e} \otimes_{K^{1/p^e}} K^{1/p^{2e}}\\
 = & (\cO_{X_{K^{1/p^e}}})^{1/p^e} \\
 \to & (\cO_{X_{K^{1/p^e}}}(\lceil (p^e - 1) \Delta_{K^{1/p^e}} \rceil + A_{K^{1/p^e}}) )^{1/p^e}
\end{array}
\]
splits.  This map factors through the twisted relative Frobenius
\[
\begin{array}{rl}
\cO_{X_{K^{1/p^e}}} \to (\cO_{X_K}(\lceil (p^e - 1) \Delta_{K} \rceil + A_K) )^{1/p^e}
\end{array}
\]
which then also splits as well.  By shrinking the base $Y$, we may assume that $Y = \Spec B$ is affine and regular and that $f : X \to Y$ is flat with geometrically normal fibers.  For simplicity of notation, we set $\cO_{X_{B^{1/p^e}}} := \cO_{X \times_{\Spec B} \Spec B^{1/p^e}}$.  Now shrink $Y$ further if necessary so that the map
\begin{equation}
\label{eq.RelativeFrobSplittingToBaseChange}
\begin{array}{rl}
\cO_{X_{B^{1/p^e}}} \to (\cO_{X}(\lceil (p^e - 1) \Delta \rceil + A) )^{1/p^e}
\end{array}
\end{equation}
splits.  Note that since $X$ is normal, $A$ and the support of $\Delta$ are Cartier in codimension $1$ and so they remain Cartier in codimension $1$ over an open set of fibers.  By shrinking $Y$ again, we may assume that $\Delta$ and $A$ are Cartier in codimension $1$ on the fibers and in particular, we will have no trouble restricting them to the fibers if we reflexify (or equivalently take the S2-ification).

For each perfect point $y \in Y$, we tensor the splitting of \eqref{eq.RelativeFrobSplittingToBaseChange} by $\otimes_{B^{1/p^e}} k(y)^{1/p^e}$ and then reflexify with respect to $\cO_{X_y}$ and so obtain a splitting of
\[
\cO_{X_{k(y)^{1/p^e}}} \to (\cO_{X_{k(y)}}(\lceil (p^e - 1) \Delta_{y} \rceil + A_{y}) )^{1/p^e}.
\]
Since the base field is perfect, $X_y = X_{k(y)} = X_{k(y)^{1/p^e}}$.  Also note that $X_y \setminus A_y$ is smooth and $\mathrm{Supp} \Delta_y$ is contained in $\mathrm{Supp} A_y$  and so we have shown that $(X_{y}, \Delta_y)$ is globally $F$-regular by \cite[Theorem 3.9]{SS}.

 \end{proof}

\begin{rem}
One can replace the most of the proof of the above theorem with an application of \cite[Theorem C.(a)]{PSZ}, at least in the case that the map $f : X \to Y$ is projective (instead of merely proper).
 \end{rem}

\section{The proof of the main theorem}\label{section-main}

First we collect recent results of the minimal model theory in positive characteristic. For basics on singularities of pairs in positive characteristic and see \cite[Section 2.1 and 2.2]{HX} and \cite[Section 2]{Birkar}. The existence of log resolutions of pairs is known for varieties of dimension three \cite{cp1}, \cite{cp2}, \cite{cut}.

In this section, we use the following two results of the MMP.  Both of these statement are well-known for experts but for the sake of completeness, we give proofs. Note that the assumption of characteristic in the following theorems is used when we apply the results of \cite{HX}, \cite{Birkar}, \cite{Xu}.

\begin{thm}\label{mfs}
Let $k$ be an algebraically closed field of characteristic $p\geq 7$.
Let $X$ be a projective $\mathbb{Q}$-factorial terminal threefold over $k$.
Assume $K_X$ is not pseudo-effective.
Then there exists a sequence of birational maps
$$X=:X_0\overset{f_0}\dashrightarrow X_1\overset{f_1}\dashrightarrow\cdots\overset{f_{N-1}}\dashrightarrow X_N=: X'$$
which satisfies the following properties.
\begin{enumerate}[(1)]
\item Each $X_i$ is a projective $\mathbb{Q}$-factorial terminal threefold.
\item {Each $f_i$ is a $K_X$-flip or $K_X$-divisorial contraction.} 
\item{There exists a surjective morphism $g:X'\to Y'$
such that
\begin{itemize}
\item{$g_*\mathcal O_{X'}=\mathcal O_{Y'}$ and $\dim X'>\dim Y',$}
\item{$\rho(X'/Y')=1$, and}
\item{$-K_{X'}$ is ample over $Y'$.}
\end{itemize}}
\end{enumerate}
We call this sequence a $K_X$-MMP.
\end{thm}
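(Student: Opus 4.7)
The plan is to execute a $K_X$-MMP step by step, using the cone and contraction theorems together with existence and termination of flips for terminal threefolds in characteristic $p\geq 7$. Inductively, assume we have constructed $X_i$, a projective $\mathbb{Q}$-factorial terminal threefold with $K_{X_i}$ not pseudo-effective (a property preserved by the MMP, since the divisorial contractions and flips only modify the model along the $K$-negative extremal ray). By the cone theorem available in this setting (see \cite{Birkar, HX}), there exists a $K_{X_i}$-negative extremal ray $R_i \subseteq \overline{NE}(X_i)$ with an associated contraction $c_i : X_i \to Z_i$ of relative Picard number one.

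Three cases arise. If $c_i$ is of fiber type, I set $(X', Y', g) := (X_i, Z_i, c_i)$ and stop; the required properties $g_*\mathcal{O}_{X'} = \mathcal{O}_{Y'}$, $\rho(X'/Y') = 1$, and the relative ampleness of $-K_{X'}$ follow directly from $c_i$ being the contraction of a $K_{X_i}$-negative extremal ray. If $c_i$ is divisorial, set $X_{i+1} := Z_i$; by \cite{HX}, $X_{i+1}$ is again projective $\mathbb{Q}$-factorial terminal. If $c_i$ is a flipping contraction (small, with $Z_i$ not $\mathbb{Q}$-factorial), the flip $f_i : X_i \dashrightarrow X_{i+1}$ exists by \cite{HX} and likewise preserves the hypotheses in (1). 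In each non-terminating case the birational map $f_i$ satisfies (2).

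It remains to argue termination. Each divisorial contraction strictly drops the Picard number, so only finitely many of them can occur; after the last divisorial step, any further MMP consists solely of flips, and termination of terminal threefold flips in characteristic $p\geq 7$ is established in \cite{Birkar} (with a subsequent cleanup in \cite{Xu}) via the difficulty invariant. Since $K_X$ is not pseudo-effective, the sequence cannot stabilize to a minimal model, so it must terminate at a fiber-type contraction, yielding the desired Mori fiber space. The main obstacle in this chain of reasoning is the termination of flips, which is the deepest ingredient being imported and which is the source of the characteristic assumption; the rest of the argument is a routine bookkeeping of MMP steps once the cone theorem, existence of contractions, and existence of flips are granted in the terminal threefold setting.
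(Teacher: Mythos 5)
Your overall MMP bookkeeping (contract, flip, or stop at fiber type; terminate via Picard number drops plus termination of flips) matches the skeleton of the paper's argument, and your termination discussion is essentially fine: for \emph{terminal} threefold flips the classical difficulty argument of \cite[6.17]{KM} works in any characteristic, which is what the paper invokes. The genuine gap is in the step you treat as routine: you invoke ``the cone theorem available in this setting'' to produce, for each $K_{X_i}$-negative extremal ray, an associated contraction morphism $c_i : X_i \to Z_i$. At the time of this paper no such off-the-shelf contraction theorem existed for threefolds in characteristic $p$, and constructing the contraction is precisely the content of the paper's proof. Neither \cite{Birkar} nor \cite{HX} hands you a contraction of an arbitrary $K$-negative extremal ray; what is available is a cone-type statement and base point freeness for \emph{big} nef divisors.

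Concretely, the paper proceeds as follows. By \cite[Theorem 1.8]{CTX} one finds an ample $\mathbb{Q}$-divisor $H$ with $L:=K_X+H$ nef and $R:=\overline{NE}(X)\cap L^{\perp}$ an extremal ray. If $L$ is big, then $L$ is semi-ample by \cite[Theorem~1.4]{Birkar} or \cite[Theorem 1.1]{Xu}, and $|mL|$ furnishes the birational contraction (divisorial or flipping, the flip then existing by \cite[Theorem 1.1]{HX}). If $L$ is \emph{not} big, semi-ampleness is not available; instead \cite[Corollary 1.5]{CTX} shows that rational curves in $R$ cover $X$, and one applies \cite[4.10]{Kollar} to produce the fiber-type contraction $g:X\to Y$. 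Your proposal silently assumes the output of both of these cases. To repair it you must either supply this two-case construction of the contraction or point to a reference that actually proves the contraction theorem in this generality; as written, the deepest ingredient of the proof has been assumed rather than established.
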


\begin{proof}
First, there are no infinite sequences of flips for terminal threefolds.
The proof of this is the same as the one of \cite[6.17]{KM}.
Therefore, it suffices to show that
there exists a $K_X$-flip,  $K_X$-divisorial contraction, or  surjective morphism $g:X\to Y$ which satisfies (3).

Since $K_X$ is not pseudo-effective, $K_X$ is not nef.
Then, by \cite[Theorem 1.8]{CTX},
we can find an ample \Q-divisor $H$ on $X$ such that
$L:=K_X+H$ is nef and that $\overline{NE}(X)\cap L^{\perp}=:R$
is an extremal ray of $\overline{NE}(X)$.
There are the following two cases.
\begin{enumerate}
\item[(a)]{$L$ is big.}
\item[(b)]{$L$ is not big.}
\end{enumerate}

(a)
Assuming $L$ is big,
we show that there exists a $K_X$-flip or a $K_X$-divisorial contraction $h:X\dashrightarrow Z.$
By \cite[Theorem~1.4]{Birkar} or \cite[Theorem 1.1]{Xu}, $L$ is semi-ample.
Let $g:X \to Y$ be the birational morphism, induced by $|mL|$, such that
$g_*{\mathcal O}_X={\mathcal O}_Y$.
If $g$ is a divisorial contraction, then we are done.
Thus we can assume that $g$ is a flipping contraction.
Then the flip exists by \cite[Theorem 1.1]{HX}.

(b)
Assuming $L$ is not big,
we show that there exists a surjective morphism $g:X \to Y$ which satisfies
the properties in (3).
By \cite[Corollary 1.5]{CTX}, rational curves $C$ such that $[C]\in R$ cover $X$.
Then we can apply \cite[4.10]{Kollar} and we obtain a morphism $g:X \to Y$ as desired.
\end{proof}

\begin{thm}\label{special_mmp}
Let $k$ be an algebraically closed field of characteristic $p\geq 7$.
Let $Y$ be a log terminal threefold over $k$.
Let \mbox{$\beta:W \to Y$} be a log resolution of $Y$ and
let $E$ be the reduced divisor on $W$ such that $\Supp\,E={\rm Ex}(\beta)$.
Then there exists a sequence of birational maps
$$W=:W_0\overset{f_0}\dashrightarrow W_1\overset{f_1}\dashrightarrow\cdots\overset{f_{N-1}}\dashrightarrow W_N=: W'$$
$$E=:E_0,\,\,\,\, E_{i+1}:=(f_i)_*E_i,\,\,\,\, E_{N}=:E'$$
which satisfies the following properties.
\begin{enumerate}[(1)]
\item Each $(W_i, E_i)$ is a $\mathbb{Q}$-factorial dlt threefold with a projective birational morphism
$\beta_i:W_i \to Y$.
\item {Each $f_i$ is a $(K_{W_i}+E_i)$-flip or $(K_{W_i}+E_i)$-divisorial contraction over $Y$.}
\item{$K_{W'}$ is nef over $Y$  and $E'=0$.}
\end{enumerate}
We call this sequence a $(K_W+E)$-MMP over $Y$.
\end{thm}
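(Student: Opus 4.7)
The plan is to run a $(K_W+E)$-MMP over $Y$ and verify that it must terminate with $E'=0$. Since $Y$ is log terminal, the discrepancies $a_i$ of the prime components $E_i$ of $E$ all satisfy $a_i>-1$, so one may write
$$K_W+E=\beta^*K_Y+F,\qquad F:=\sum_i(a_i+1)E_i,$$
where $F\ge 0$ is $\beta$-exceptional with $\Supp F=E$ and $K_W+E\equiv_Y F$. Because $\beta$ is a log resolution, $(W,E)$ is SNC, hence $\mathbb{Q}$-factorial dlt, so the process is set up.

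Next, I would run a $(K_W+E)$-MMP over $Y$ with scaling by a $\beta$-ample $\mathbb{Q}$-divisor. The required $(K_{W_i}+E_i)$-negative extremal rays arise from the relative cone theorem of \cite{CTX}, divisorial and small contractions over $Y$ exist by the relative base-point-free theorem of \cite{Birkar, Xu}, and small contractions admit flips by \cite[Theorem 1.1]{HX}, all of which are valid in characteristic $p\geq 7$. Since these operations preserve $\mathbb{Q}$-factoriality and the dlt property, each $(W_i,E_i)$ remains $\mathbb{Q}$-factorial dlt with a projective birational morphism $\beta_i\colon W_i\to Y$, giving (1) and (2).

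The main difficulty is termination. Since the birational transform $F_i$ of $F$ stays effective, $\beta_i$-exceptional, and numerically equivalent to $K_{W_i}+E_i$ over $Y$, this MMP is really an $F$-MMP over $Y$. Any divisorial contraction over $Y$ must contract a $\beta_i$-exceptional prime divisor, necessarily a component of $E_i$, and therefore strictly drops the number of components of $F_i$; consequently only finitely many divisorial contractions can occur. Between them, termination of flips for $\mathbb{Q}$-factorial dlt threefold pairs in characteristic $p\geq 7$ comes from the MMP with scaling established in \cite{HX, Birkar, Xu}, and this is the principal obstacle.

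Upon termination we obtain $\beta'\colon W'\to Y$ with $K_{W'}+E'$ being $\beta'$-nef. Writing $K_{W'}+E'=\beta'^*K_Y+F'$, the divisor $F'\ge 0$ is $\beta'$-exceptional and simultaneously $\beta'$-nef, so the negativity lemma forces $F'\le 0$, and hence $F'=0$. Since each coefficient $a'_j+1$ on the prime components of $E'$ is strictly positive, this yields $E'=0$, which together with the relation $K_{W'}=\beta'^*K_Y+F'-E'$ also shows that $K_{W'}$ is $\beta'$-nef, proving (3).
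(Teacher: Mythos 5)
Your overall strategy coincides with the paper's: write $K_W+E=\beta^*K_Y+F$ with $F\geq 0$ effective, $\beta$-exceptional and $\Supp F=\Supp E$ (using that $Y$ is log terminal), produce each step via the relative cone/base-point-free theorems of Birkar--Xu and the existence of flips of Hacon--Xu, and conclude $E'=0$ at the end by the negativity lemma applied to $F'$. The setup, the existence of each step, and the final step (3) are all fine and match the paper.

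The genuine gap is in your termination argument for flips. You correctly use the identity $K_{W_i}+E_i\equiv_Y F_i$ to bound the number of divisorial contractions, but for the flips you write that termination ``comes from the MMP with scaling established in \cite{HX}, \cite{Birkar}, \cite{Xu}'' and flag it as the principal obstacle without resolving it. General termination of flips for $\mathbb{Q}$-factorial dlt threefold pairs in characteristic $p\geq 7$ is not a result available in those references, so this appeal does not close the argument. The fix is already implicit in your own observation: since $K_{W_i}+E_i\equiv_Y F_i$ with $F_i\geq 0$ and $\Supp F_i=\Supp E_i$, any curve $C$ contracted by a step of this MMP satisfies $F_i\cdot C<0$ and hence lies in $\Supp F_i=\Supp E_i=\lfloor E_i\rfloor$. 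Thus every flipping locus is contained in the reduced boundary, and one concludes by \emph{special termination}, namely \cite[Proposition 4.7]{Birkar}, which is exactly how the paper terminates the sequence. With that substitution your proof agrees with the paper's.
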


\begin{proof}
First, we show that a $(K_W+E)$-MMP over $Y$ terminates after finitely many steps, that is,
there are no infinite sequences of flips.
Since $Y$ is log terminal,
we can write
$$K_W+E=\beta^*K_{Y}+D\equiv_Y D$$
where $D$ is an effective \Q-divisor such that $\Supp\,D=\Supp\,E=\Ex(\beta)$.
$D_1\equiv_Y D_2$ means that
$D_1\cdot C=D_2\cdot C$
for every curve $C$ on $W$ such that $\beta(C)$ is one point.
Thus, for every step $f_i:W_i \dashrightarrow W_{i+1}$,
the exceptional locus $\Ex(f_i)$ is contained in  $\Supp\, E_i$.
Then, we can apply \cite[Proposition 4.7]{Birkar} and a sequence of flips terminates.

Assume that $(W_i, E_i)$ satisfies the property (1).
We show that
there exists a $(K_{W_i}+E_i)$-flip or  $(K_{W_i}+E_i)$-divisorial contraction over $Y$, or that
$K_{W_i}+E_i$ is nef over $Y$.
Thus, suppose $K_{W_i}+E_i$ is not nef over $Y$.
By \cite[3.3]{Birkar},
we can find a $\beta_i$-ample \Q-divisor $H$ on $W_i$ such that
$L:=K_{W_i}+E_i+H$ is $\beta_i$-nef over $Y$ and that $\overline{NE}(W_i/Y) \cap L^{\perp}=:R$
is an extremal ray of $\overline{NE}(W_i/Y)$.
By \cite[Theorem~1.4]{Birkar} or \cite[Theorem 1.1]{Xu}, $L$ is $\beta_i$-semi-ample.
Then, $L$ induces a projective birational morphism $g:W \to Z$ over $Y$ such that
$g_*{\mathcal O}_W={\mathcal O}_Z$.
If $g$ is a divisorial contraction, then we are done.
Thus we can assume that $g$ is a flipping contraction.
Then the flip exists by \cite[Theorem 1.1]{HX}.

Therefore, we obtain a sequence
$$W=:W_0\overset{f_0}\dashrightarrow W_1\overset{f_1}\dashrightarrow\cdots\overset{f_{N-1}}\dashrightarrow W_N=: W'$$
such that $K_{W'}+E'$ is nef over $Y$.
Next we show $E'=0$.
Indeed we have
$$K_{W'}+E'\equiv_Y D'$$
where $D'$ is the push-forward of $D$.
Therefore, $D'$ is exceptional over $Y'$ and nef over $Y'$.
Then, by the negativity lemma (cf. \cite[2.3]{Birkar}), we obtain $D'=0$.
Since $\Supp\, D=\Supp\, E=\Ex(\beta)$, we obtain $E'=0$.
\end{proof}

We explain now how to take terminalizations  of globally $F$-regular threefolds.
\begin{prop}\label{terminal}
Let $k$ be an algebraically closed field $k$ of characteristic $p\geq 7$.
Let $X$ be a projective globally $F$-regular threefold over $k$.
Then there exists a birational morphism
$\pi:X'\to X$ from
a projective \Q-factorial terminal globally $F$-regular threefold $X'$.
\end{prop}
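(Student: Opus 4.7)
The plan is to construct $X'$ as a $\mathbb{Q}$-factorial terminalization of the klt pair $(X,\Delta)$ provided by Theorem~\ref{ss1}, and then to transfer global $F$-regularity across the resulting crepant birational morphism. First, since $X$ is globally $F$-regular, Theorem~\ref{ss1} furnishes an effective $\mathbb{Q}$-divisor $\Delta$ on $X$ such that $(X,\Delta)$ is klt and $-(K_X+\Delta)$ is ample. In particular $X$ itself is klt, hence log terminal in the sense of Theorem~\ref{special_mmp}.

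Next, I would build the terminalization by a standard MMP. Take a log resolution $\beta : W \to X$ of $(X,\Delta)$, which exists in dimension three by \cite{cp1}, \cite{cp2}, \cite{cut}. Let $E_1,\ldots,E_r$ be the $\beta$-exceptional prime divisors with discrepancies $a_i := a(E_i;X,\Delta) > -1$, and define
\[
\Gamma \,:=\, \beta^{-1}_*\Delta + \sum_{a_i\leq 0}(-a_i)E_i + \sum_{a_i > 0}\epsilon\,E_i
\]
for $\epsilon>0$ sufficiently small. Then $(W,\Gamma)$ is klt and $K_W+\Gamma-\beta^*(K_X+\Delta)$ is effective and $\beta$-exceptional, supported precisely on the $E_i$ with $a_i>0$. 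Running a $(K_W+\Gamma)$-MMP over $X$ using the threefold MMP machinery of \cite{HX}, \cite{Birkar}, \cite{Xu} (as in the proof of Theorem~\ref{special_mmp}) yields a model $\pi:X'\to X$ in which every $E_i$ with $a_i>0$ is contracted while those with $a_i\leq 0$ survive as divisors on $X'$. By the negativity lemma $\pi^*(K_X+\Delta) = K_{X'}+\Delta'$ with $\Delta'\geq 0$, and $(X',\Delta')$ is $\mathbb{Q}$-factorial terminal. Since $\Delta'\geq 0$, discrepancies of $(X',0)$ are at least those of $(X',\Delta')$, so $X'=(X',0)$ is itself $\mathbb{Q}$-factorial terminal.

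It remains to show $X'$ is globally $F$-regular, and this is where the main difficulty lies. The crepant identity $\pi^*(K_X+\Delta) = K_{X'}+\Delta'$ yields $\pi$-pullback isomorphisms between the reflexive sheaves $\mathcal{O}_X(m(K_X+\Delta))$ and $\mathcal{O}_{X'}(m(K_{X'}+\Delta'))$ for every $m$ making both sides Cartier. Via these identifications and the projection formula, any splitting
\[
\mathcal{O}_X \to F^e_*\mathcal{O}_X\bigl(\lceil (p^e-1)\Delta + \pi_*E'\rceil\bigr),
\]
supplied by global $F$-regularity of $(X,\Delta)$ for a given effective Weil divisor $E'$ on $X'$, should pull back through $\pi$ to a splitting
\[
\mathcal{O}_{X'} \to F^e_*\mathcal{O}_{X'}\bigl(\lceil (p^e-1)\Delta' + E'\rceil\bigr).
\]
Establishing this crepant descent of global $F$-regularity rigorously is the main obstacle: although folklore, it requires careful tracking of reflexive-sheaf identifications and round-up corrections on the $\mathbb{Q}$-factorial, non-Gorenstein variety $X'$. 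Once $(X',\Delta')$ is known to be globally $F$-regular, Lemma~\ref{lem_1.2} applied with $0\leq\Delta'$ immediately yields that $X'=(X',0)$ is globally $F$-regular, completing the construction.
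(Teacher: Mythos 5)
Your overall strategy is the same as the paper's: use Theorem \ref{ss1} to produce $\Delta$ with $(X,\Delta)$ globally $F$-regular and klt, pass to a $\mathbb{Q}$-factorial terminalization $\pi:X'\to X$ with $K_{X'}+\Gamma=\pi^*(K_X+\Delta)$ and $\Gamma\geq 0$, transfer global $F$-regularity to $(X',\Gamma)$, and conclude with Lemma \ref{lem_1.2}. The only structural difference in the first half is that the paper simply quotes \cite[Theorem 1.7]{Birkar} for the existence of the terminalization, whereas you reconstruct it by running an MMP over $X$ from a log resolution; that reconstruction is standard and acceptable (modulo the routine check that the $E_i$ with $a_i\leq 0$ are not contracted, which follows because each MMP step strictly increases discrepancies while the crepant identity forces the discrepancy of such an $E_i$ over the end product to still equal $a_i$).

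The genuine gap is the step you flag yourself: the descent of global $F$-regularity across $\pi$ is asserted ("should pull back") but not proved, and the reduction you sketch does not work as stated. Given an effective divisor $E'$ on $X'$, you invoke the splitting on $X$ associated with $\pi_*E'$; but if $E'$ has $\pi$-exceptional components, $\pi_*E'$ forgets them, and the resulting map on $X'$ only reaches $F^e_*\mathcal{O}_{X'}(\lceil(p^e-1)\Gamma\rceil+\pi^*\pi_*E')$, which need not dominate $E'$ along $\Ex(\pi)$. The paper fills this in by citing \cite[Section 7.2]{BS} and \cite[Lemma 3.3]{gt}, and the argument runs as follows. By \cite[Theorem 3.9]{SS} it suffices to verify the splitting for a single sufficiently large $E'$, and one may arrange $E'\leq \pi^*(nE)$ for $n\gg 0$, where $E$ is an effective ($\mathbb{Q}$-Cartier) divisor on $X$ whose support contains $\pi(\Ex(\pi))\cup\pi(\Supp E')$, since every exceptional prime divisor lying over $\Supp E$ occurs in $\pi^*E$ with positive coefficient. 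For $E'=\pi^*(nE)$, duality for the finite Frobenius and the crepant identity give an identification, up to reflexification, of $\pi_*\sHom(F^e_*\mathcal{O}_{X'}(\lceil(p^e-1)\Gamma\rceil+\pi^*(nE)),\mathcal{O}_{X'})$ with $\sHom(F^e_*\mathcal{O}_{X}(\lceil(p^e-1)\Delta\rceil+nE),\mathcal{O}_{X})$ --- both are $F^e_*$ of $\lfloor(1-p^e)(K+\Delta)\rfloor$-twists, and the effectivity of $\Gamma$ ensures the exceptional corrections have the right sign --- so the global section giving the splitting downstairs yields a map upstairs, which sends $1$ to $1$ because this can be checked over the locus where $\pi$ is an isomorphism. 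Without this argument (or an explicit appeal to the cited lemmas), your proof of Proposition \ref{terminal} is incomplete at its crucial point.
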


\begin{proof}
By Theorem \ref{ss1}, there exists an effective $\mathbb{Q}$-divisor $\Delta$ such that $K_X+\Delta$ is \Q-Cartier
and that $(X, \Delta)$ is globally $F$-regular, in particular, Kawamata log terminal.  By \cite[Theorem 1.7]{Birkar},
there exist a birational morphism $\pi:X' \to X$ and an effective $\mathbb{Q}$-divisor $\Gamma$ on $X'$ such that $K_{X'}+\Gamma=\pi^*(K_X+\Delta)$ and that $(X',\Gamma)$ is $\mathbb{Q}$-factorial and terminal.
It follows easily that $(X', \Gamma)$ is globally $F$-regular since in fact any splitting used to prove global $F$-regularity of $(X, \Delta)$ also extends to a splitting of $(X', \Gamma)$ (here we crucially use that $\Gamma$ is effective, see for example \cite[Section 7.2]{BS} and \cite[Lemma 3.3]{gt}).
In particular, $X'$ is also globally $F$-regular by Lemma \ref{lem_1.2}.
\end{proof}

Using the MRCC fibration \cite[Ch IV, Section 5]{Kollar2} we show below that \Q-factorial Fano varieties with Picard number one over uncountable fields are rationally chain connected.

\begin{prop}\label{rcc-pic-one}
Let $k$ be an uncountable algebraically closed field.
Let $X$ be a projective normal $\mathbb{Q}$-factorial variety over $k$.
Then, the following assertions hold.
\begin{enumerate}
\item{If $-K_X$ is big, then $X$ is uniruled. }
\item{If $-K_X$ is ample and $\rho(X)=1$, then $X$ is rationally chain connected.}
\end{enumerate}
\end{prop}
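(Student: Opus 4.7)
My plan is to establish (1) via Miyaoka--Mori bend-and-break, and then to deduce (2) from (1) using the Maximal Rationally Chain Connected (MRCC) fibration together with a Picard-number-one contradiction.

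For (1), the bigness of $-K_X$ allows a decomposition $-K_X \sim_{\mathbb{Q}} A + E$ with $A$ an ample $\mathbb{Q}$-divisor and $E$ an effective $\mathbb{Q}$-divisor. Through a general point $x$ in the smooth locus of $X$, take a complete intersection curve $C$ cut out by sufficiently general members of $|mH|$ for $H$ ample and $m \gg 0$; a generic such $C$ avoids $\Supp(E)$ and the singular locus of $X$, giving $-K_X \cdot C \geq A \cdot C > 0$. Passing to a resolution $\pi \colon \tilde X \to X$ (available in our dimensional range by \cite{cut}), the strict transform $\tilde C$ lifts isomorphically and misses $\Ex(\pi)$, so $-K_{\tilde X} \cdot \tilde C = -K_X \cdot C > 0$. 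Miyaoka--Mori bend-and-break on the smooth variety $\tilde X$ \cite[Ch.~IV,~1.14]{Kollar2} produces a rational curve through the lift of $x$, whose image is a rational curve through $x$ on $X$. Varying $x$, $X$ is uniruled.

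For (2), since $-K_X$ is ample (hence big), part (1) yields uniruledness of $X$. The MRCC fibration $\pi \colon X \dashrightarrow Z$ exists over the uncountable field $k$ by \cite[Ch.~IV,~5.2]{Kollar2}, and uniruledness forces $\dim Z < n := \dim X$. We claim $\dim Z = 0$. Suppose not, and let $H \subset Z$ be a very ample divisor. Define $D := \overline{\pi^{-1}(H)} \subset X$; since the indeterminacy locus $B$ of $\pi$ has codimension $\geq 2$ (as $X$ is normal), $D$ is a well-defined effective Weil divisor on $X$, and by $\mathbb{Q}$-factoriality it is $\mathbb{Q}$-Cartier. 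Now build two curves. Let $C_1$ be a general curve lying in a general fiber $F' = \overline{\pi^{-1}(z_0)}$ with $z_0 \in Z \setminus H$ generic, chosen to avoid $B$; then $\pi(C_1) = \{z_0\}$ and $C_1 \cap D = \emptyset$ set-theoretically, so $D \cdot C_1 = 0$. Let $C_2 \subset X$ be a general complete intersection curve, so that $\pi(C_2)$ is a curve in $Z$; by ampleness of $H$ together with transversality, $D \cdot C_2 > 0$. Since $\rho(X) = 1$ gives $\dim N_1(X)_{\mathbb{R}} = 1$, the classes $[C_1]$ and $[C_2]$ are proportional in $N_1(X)_{\mathbb{R}}$, and pairing with an ample class on $X$ shows the proportionality constant is positive. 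This yields $0 = D \cdot C_1 = \lambda (D \cdot C_2) > 0$, a contradiction. Hence $\dim Z = 0$ and $X$ is rationally chain connected.

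The main technical point is justifying $D \cdot C_1 = 0$ despite the singularities of $X$. What makes this work is that for a rational map from a normal variety the indeterminacy locus has codimension $\geq 2$, so a general curve $C_1$ inside the positive-dimensional proper subvariety $F' \subset X$ can be chosen to miss it entirely, reducing $C_1 \cap D = \emptyset$ to the set-theoretic statement that the image fiber $\pi(C_1) = \{z_0\}$ is disjoint from $H$.
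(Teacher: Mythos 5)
Your overall strategy coincides with the paper's: part (1) via a general complete intersection curve plus bend-and-break, and part (2) via the MRCC fibration together with a divisor pulled back from the base that contradicts $\rho(X)=1$. Part (2) is essentially the paper's argument: the paper picks a prime divisor $D_X$ inside $\overline{f^{-1}(D_Y)}$ and observes it must be ample yet disjoint from a fiber curve, whereas you package the same contradiction as a proportionality of two curve classes in $N_1(X)_{\mathbb{R}}$. Both work, and the point you correctly isolate --- that the MRCC fibration is an honest proper morphism away from a codimension $\geq 2$ locus, so a general fiber curve genuinely misses $\overline{\pi^{-1}(H)}$ --- is exactly what makes the paper's version go through.

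The one step I would not accept as written is the resolution $\pi\colon \tilde X \to X$ in part (1). The proposition carries no bound on $\dim X$ and no restriction on the characteristic (and in this paper it is applied over fields of characteristic $p$), so ``available in our dimensional range by \cite{cut}'' is not justified: \cite{cut} is a theorem about threefolds, and resolution of singularities is open in positive characteristic in higher dimension. The detour is also unnecessary: the version of bend-and-break the paper cites, \cite[Ch II, Theorem 5.8]{Kollar2}, only requires $X$ to be smooth along the image of the curve, which your complete intersection curve already arranges; applying it directly on $X$ removes any need for a resolution. A second, minor point: a general complete intersection curve does not \emph{avoid} $\Supp(E)$ when $\dim X \geq 2$ (a curve cannot miss a divisor that meets every ample class positively); what you need, and what is true, is only that $C \not\subseteq \Supp(E)$, so that $E \cdot C \geq 0$ and hence $-K_X \cdot C \geq A \cdot C > 0$.
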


\begin{proof}
Let $n:=\dim X$.

(1)
Taking general hyperplane sections $H_i$,
we can find a smooth curve $C=H_1\cap\cdots \cap H_{n-1}$ such that
$C$ does not intersect the singular locus of $X$.
It is sufficient to show that, for every $c\in C$,
there exists a rational curve $R$ passing through $c$.
Since $C$ is a proper curve contained in the non-singular locus $X_{{\rm reg}}$,
we see $K_X\cdot C<0$.
Then, the assertion follows from  Bend and Break (cf. \cite[Ch II, Theorem 5.8]{Kollar2}).

(2)
Take the maximally rationally chain connected (MRCC) fibration \cite[Ch IV, Section 5]{Kollar2}:
$$X\supset X'\overset{f}\to Y.$$
Note that $X'$ is a non-empty open subset of $X$ and
$f$ is a proper surjection.
By (1) and \cite[Ch IV, 5.2.1 Complement]{Kollar2}, we see $\dim Y<\dim X$.
It is enough to show $\dim Y=0$.
Thus, assume $0<\dim Y<\dim X$ and
let us derive a contradiction.
Choose a closed point $y\in Y$ and a non-zero effective Cartier divisor $D_Y$ on $Y$
such that $y\not\in\Supp D_Y$.
Fix a proper curve $C_X$ in the fiber $f^{-1}(y)$.
Take a prime divisor $D_X$ on $X$
contained in the closure $\overline{f^{-1}(D_Y)}$.
Then we see
$C_X\cap D_X \subset f^{-1}(y) \cap \overline{f^{-1}(D_Y)}=\emptyset.$
On the other hand, since $X$ is \Q-factorial and $\rho(X)=1$,
$D_X$ must be a \Q-Cartier ample divisor.
This is a contradiction.
\end{proof}

First we show the rationality of globally $F$-regular curves and surfaces:
\begin{prop}\label{Rationality of glFsurf}Let $X$ be a projective globally F-regular surface or curve over an algebraically closed field of positive characteristic.
Then $X$ is rational.
\end{prop}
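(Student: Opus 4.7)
The plan is to treat the curve and surface cases separately. In both, Theorem~\ref{ss1} provides an effective $\mathbb{Q}$-divisor $\Delta$ making $(X,\Delta)$ a klt log Fano pair. For $X$ a curve, global $F$-regularity forces $X$ to be normal, hence smooth since $\dim X = 1$. Theorem~\ref{ss1} then gives $\deg(-K_X-\Delta)>0$, so with $\deg\Delta\geq 0$ we get $\deg K_X<0$, and a smooth projective curve with negative canonical degree is $\mathbb{P}^1$.

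For $X$ a surface, I would take a resolution $\pi:Y\to X$ and aim to prove $Y$ is rational; this suffices because rationality is a birational invariant. The tool is Castelnuovo's criterion (valid in any characteristic): the smooth projective surface $Y$ is rational if and only if $q(Y)=h^1(Y,\cO_Y)=0$ and $P_2(Y)=h^0(Y,2K_Y)=0$. The plan is to verify both vanishings using the globally $F$-regular structure of $X$.

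For $P_2(Y)=0$: from Theorem~\ref{ss1}, $-K_X = -(K_X+\Delta)+\Delta$ is the sum of an ample and an effective $\mathbb{Q}$-divisor, hence big. A Bend and Break argument as in the proof of Proposition~\ref{rcc-pic-one}(1) (using a general complete-intersection curve in the smooth locus with negative intersection against $K_X$, and passing to an uncountable base field if needed, since uniruledness is preserved under such extensions) then shows that $X$ is uniruled. Uniruledness passes to the resolution, so $Y$ is uniruled as well, and therefore $P_m(Y)=0$ for every $m\geq 1$.

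For $q(Y)=0$: the local rings of $X$ are strongly $F$-regular, and strongly $F$-regular surface singularities are rational (being klt in dimension two, or equivalently $F$-rational and Cohen--Macaulay). Consequently $R^i\pi_*\cO_Y=0$ for $i>0$, which gives $H^1(Y,\cO_Y)\cong H^1(X,\cO_X)$; this in turn vanishes by the Kodaira-type vanishing of Smith for projective globally $F$-regular varieties, namely $H^i(X,\cO_X)=0$ for $0<i<\dim X$. This last step is the main obstacle: in characteristic zero one would instead use classical Kodaira vanishing on $Y$, but that can fail in positive characteristic and must here be replaced by the Frobenius-theoretic vanishing specific to globally $F$-regular varieties.
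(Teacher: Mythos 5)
Your curve case and your $q(Y)=0$ argument are fine, and the overall strategy (Castelnuovo--Zariski's criterion $q=P_2=0$) is the same as the paper's. The gap is in your proof that $P_2(Y)=0$: you deduce it from uniruledness of $Y$, but in positive characteristic uniruledness does \emph{not} imply the vanishing of plurigenera. The implication ``uniruled $\Rightarrow P_m=0$'' requires \emph{separable} uniruledness (cf. \cite[Ch.\ IV, 1.8]{Kollar2}), and Bend and Break, as used in Proposition~\ref{rcc-pic-one}(1), only produces a (possibly everywhere non-free, inseparable) dominating family of rational curves. Shioda's unirational supersingular K3 surfaces in small characteristic are uniruled smooth projective surfaces with $P_m=1$ for all $m\geq 1$, so this step fails exactly in the setting you are working in.

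The repair is to bypass uniruledness and kill the plurigenera directly from the bigness of $-K$. This is what the paper does: it passes to the \emph{minimal} resolution $Y\to X$, where all discrepancies are $\leq 0$, so that global $F$-regularity lifts to $Y$ (the splittings for $(X,\Delta)$ extend to $(Y,\Gamma)$ with $\Gamma=\pi^*(K_X+\Delta)-K_Y\geq 0$); then Theorem~\ref{ss1} applied to $Y$ itself makes $-K_Y$ big, whence $H^0(Y,mK_Y)=0$ for all $m>0$ (a divisor cannot be effective if its negative is big), and Smith's vanishing applied to $Y$ gives $q(Y)=0$ without any discussion of rational singularities. Alternatively, keeping your resolution, you could note $K_{Y_{\min}}=\pi^*K_X-\Delta'$ with $\Delta'\geq 0$ on the minimal resolution, so $H^0(Y,mK_Y)=H^0(Y_{\min},mK_{Y_{\min}})\subseteq H^0(X,mK_X)=0$ by bigness of $-K_X$; either way the appeal to uniruledness must be removed.
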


\begin{proof} First we see that $H^{1}(X, \mathcal{O}_X)=0$ by \cite[5.5 Corollary]{Sm}. Thus $X$ is a smooth rational curve when $\mathrm{dim}\,X=1$. Next when $X$ is a surface, by taking the minimal resolution, and noting that the discrepancies are all $\leq 0$, we may assume $X$ is smooth (cf. \cite[Section 7.2]{BS} and \cite[Lemma 3.3]{gt}). We see that $-K_X$ is big by  Theorem \ref{ss1}. Thus $H^0(X, mK_X)=0$ for any $m>0$. Castelnuovo's rationality criterion implies that  $X$ is rational.

\end{proof}

Having finished the preparatory steps we start the  proof of Theorem \ref{0gfr-rcc}.


\begin{prop}
\label{prop:RCC-Freg_1}
Let $k$ be an algebraically closed field of characteristic $p\geq 11$.
Let $X$ be a projective globally $F$-regular threefold over $k$.
Then, to prove that $X$ is rationally chain connected it is sufficient to assume that $X$ is terminal, $\mathbb{Q}$-factorial and it admits a Mori fiber space structure $f : X \to Y$ with $\dim Y=1$ or $2$.
\end{prop}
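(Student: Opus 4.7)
My plan is to reduce the general case to the stated hypothesis through three successive reductions (field enlargement, terminalization, and a minimal model program), then separately dispose of the case in which the Mori fibration collapses to a point.

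First I would pass to an uncountable algebraically closed extension of $k$ using Lemma~\ref{base_change_rcc}; this enlargement will be needed later to apply Proposition~\ref{rcc-pic-one}. Then I would invoke Proposition~\ref{terminal} to obtain a proper birational morphism $\pi\colon X'\to X$ with $X'$ projective, $\mathbb{Q}$-factorial, terminal, and globally $F$-regular. Because rational chain connectedness descends through proper surjections (the image of a covering family of rational chains is a covering family with rational components, allowing some components to degenerate to points), it will suffice to prove RCC of $X'$.

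Since $X'$ is globally $F$-regular, Theorem~\ref{ss1} furnishes $\Delta\ge 0$ with $-(K_{X'}+\Delta)$ ample, so $K_{X'}$ is not pseudo-effective. Theorem~\ref{mfs} then allows me to run a $K_{X'}$-MMP
$$X'=X_0\dashrightarrow X_1\dashrightarrow\cdots\dashrightarrow X_N=X''$$
ending in a Mori fiber space $g\colon X''\to Y$ with $\dim Y<3$; by Lemma~\ref{glFness for MMP}, each $X_i$ remains globally $F$-regular (flips are small birational, and divisorial contractions satisfy $f_*\mathcal{O}=\mathcal{O}$), so $X''$ is $\mathbb{Q}$-factorial, terminal, and globally $F$-regular. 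If $\dim Y=0$ then $\rho(X'')=1$ and $-K_{X''}$ is ample, so Proposition~\ref{rcc-pic-one}(2) yields RCC of $X''$ directly (this is the point at which uncountability of $k$ is used). Otherwise $\dim Y\in\{1,2\}$, which is precisely the case assumed in the statement, and I may take RCC of $X''$ as given.

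The remaining step, which I expect to be the main obstacle, is to propagate RCC backward from $X''$ through the MMP to $X'$. This requires care because, as the introduction emphasizes, RCC is not a birational invariant in general; the cone-over-an-elliptic-curve example shows exactly how blowing up a singular RCC variety can produce a non-RCC resolution. The key point in our setting is that the MMP is $K$-negative, which forces the contracted loci to be uniruled by rational curves in the extremal rays. For a flip $X_i\dashrightarrow X_{i+1}$, which is an isomorphism in codimension one, strict transforms of a covering family of rational chains on one side produce such a family on the other. For a divisorial contraction $f_i\colon X_i\to X_{i+1}$, I would lift the components of a rational chain from $X_{i+1}$ by strict transform, and whenever two consecutive lifted components fail to meet because they hit the exceptional divisor at distinct points of a common fiber of $f_i$, I would bridge them by rational curves contained in that fiber (which is a connected union of extremal rational curves). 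Iterating across the MMP transfers RCC from $X''$ back to $X'$, completing the reduction.
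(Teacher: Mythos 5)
Your reduction scaffolding (passing to an uncountable field, terminalizing via Proposition~\ref{terminal}, running the $K_X$-MMP of Theorem~\ref{mfs}, and handling $\dim Y=0$ with Proposition~\ref{rcc-pic-one}) matches the paper. The gap is in the final step, where you transfer RCC backward through the MMP. You assert that the fibers of a divisorial contraction (and, implicitly, the flipping loci) are ``connected unions of extremal rational curves,'' and you use these to bridge disconnected strict transforms. But in positive characteristic this is precisely the statement that has to be proved, and it is the technical heart of the whole argument: the cone theorem of \cite{CTX} produces a single rational curve generating the extremal ray, not a covering of the contracted locus by chains of rational curves, and the characteristic-zero tools (Kawamata's length estimates, \cite{HM}) that would give rational chain connectedness of the fibers of an extremal contraction are not available here. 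Your flip case has the same problem in disguise: a chain on $X_{i+1}$ passing through the flipped locus has disconnected strict transform on $X_i$, and reconnecting it requires rational curves filling the fibers of the flipping contraction, which again is exactly what is not yet known.

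The paper fills this gap by a detour your proposal does not contain: instead of inducting along the $K_X$-MMP itself, it takes a common log resolution $W$ of $X$ and $X'$ (so that RCC of $W$ suffices, since $W\to X$ is proper surjective), runs a $(K_W+E)$-MMP over $X'$ with $E$ the reduced $\beta$-exceptional divisor (Theorem~\ref{special_mmp}), shows via the negativity lemma that the output of that MMP is $X'$ itself, and then inducts backward along \emph{that} MMP. At each step the relevant exceptional component $F$ is shown to be normal by \cite[Theorem 3.1 and Proposition 4.1]{HX}, and adjunction $K_F+\Delta_F=(K_{W_i}+E_i-\frac{1}{n}(E_i-F))|_F$ produces a klt surface pair with $-(K_F+\Delta_F)$ ample, or relatively ample over a curve, or negative on a flipping curve; from this one extracts the needed rational curves (a rational surface, a ruled surface, or a smooth rational flipping curve, respectively). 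You would need to supply an argument of comparable substance --- either this detour or a direct proof that contracted loci of the $K_X$-MMP are swept out by rational curves --- before your induction closes. A minor additional point: when enlarging the base field you should also invoke Corollary~\ref{cor.GloballyFRegBaseChange} so that global $F$-regularity persists after the extension.
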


\begin{proof}

\begin{step}
We may assume $k$ is uncountable by Lemma \ref{base_change_rcc} and Corollary \ref{cor.GloballyFRegBaseChange}.
Moreover by Proposition~\ref{terminal},
we also may assume that $X$ is $\mathbb{Q}$-factorial and terminal.
\end{step}

\begin{step}\label{reduction-mfs}
In this step, we show that we may assume $X$ has a Mori fiber space structure $f: X \to Y$.

We apply Theorem~\ref{mfs} and so run a $K_X$-MMP
$$X=:X_0\overset{f_0}\dashrightarrow X_1\overset{f_1}\dashrightarrow\cdots\overset{f_{N-1}}\dashrightarrow X_N=: X'$$
where $X'$ is also globally $F$-regular by \ref{mfs}(2) and has a Mori fiber space structure $X' \to Y'$.
We claim that $X$ is rationally chain connected if $X'$ is rationally chain connected.

Take a common log resolution
\begin{equation*}
\label{eq:line}
\xymatrix{ & W \ar[dl]_{\alpha} \ar[dr]^{\beta}\\
 X \ar@{.>}[rr] &  & X'.}
\end{equation*}
Then it suffices to show that $W$ is rationally chain connected.
Set $E$ to be the exceptional divisor of $\beta$
 and run a $(K_W+E)$-MMP over $X'$
$$W=:W_0\overset{f_0}\dashrightarrow W_1\overset{f_1}\dashrightarrow\cdots\overset{f_{N-1}}\dashrightarrow W_N=: W'$$
$$E=:E_0,\,\,\,\, E_{i+1}:=(f_i)_*E_i,\,\,\,\, E_{N}=:E'$$
by Theorem~\ref{special_mmp}.
We see $E'=0$ by Theorem~\ref{special_mmp} (3).

\begin{claim}
The end result $W'$ of this MMP is isomorphic to $X'$.
\end{claim}
\begin{proof}[Proof of claim]
Note first that $X'$ is $\mathbb{Q}$-factorial and terminal.
Since $K_{X'}$ is \Q-Cartier,
we can write $K_{W'}=\gamma^*K_{X'}+D$ where $\gamma:W' \to X'$ and $D$ is $\gamma$-exceptional.
Since $X'$ is terminal, $D$ must be effective.
By the negativity lemma (cf. \cite[2.3]{Birkar}), we see $D=0$.
Since $X'$ is terminal, $\gamma:W' \to X'$ is a small birational morphism.
Assume that $\gamma$ is not an isomorphism.
Then, we can find a $\gamma$-exceptional curve $C$ on $W'$ and a prime divisor $D_{W'}$ on $W'$
which intersects $C$ but does not contain $C$.
Since $W'$ is \Q-factorial, $D_{W'}\cdot C>0$.
On the other hand, since $X'$ is \Q-factorial and $\gamma$ is small,
we obtain $D_{W'}=f^*f_*D_{W'}$, which implies $D_{W'}\cdot C=0$.
This is a contradiction and proves the claim.
\end{proof}

 Let $g:W_i \dashrightarrow W_{i+1}$ be a step of this MMP. Inductively we may assume that $W_{i+1}$ is rationally chain connected. We show the rational chain connectedness of $W_i$ case by case:

 \begin{case} $g$ is a divisorial contraction and the exceptional  divisor is contracted to a  point.
 \end{case}

\begin{case} $g$ is a divisorial contraction and the exceptional   divisor  is contracted  to a curve.
 \end{case}
\begin{case} $g$ is a flip.
\end{case}
Let $F$ be the exceptional prime divisor in Case 1 and Case 2 or a component of $E_i$ such that $C \subseteq F$ in Case 3, where $C$ is some flipping curve. By applying \cite[Theorem 3.1 and Proposition 4.1]{HX} to the pair $(W_i, E_i- \frac{1}{n}(E_i-F))$ for $n\gg1$, it holds that $F$ is normal.
We fix such a large integer $n\gg 1$.
Note that, for $K_F+\Delta_F:=(K_{W_i}+E_i-\frac{1}{n}(E_i-F))|_{F}$, $(F,\Delta_F)$ is klt.

Then, in Case 1, since $-(K_F+\Delta_F)$ is ample,  $F$ is a rational surface, in particular it is rationally connected. Thus $W_i$ is rationally chain connected.

Next, for Case 2, we consider the surjective morphism $h: F \to B$ to a curve $B$ which is induced by the Stein factorization theorem of $g|_{F}$.
Since  $-(K_F+\Delta_F)$ is $h$-ample,
the adjunction formula and the fact that the general fiber of $h$ is reduced (e.g., \cite[7.3]{Badescu}) implies that
a general fiber $D$ of $h$ is $\mathbb P^1$.
Take a resolution of singularities: $F' \to F$ and
we see that the composition morphism $F' \to F \overset{h}\to B$ is a ruled surface structure.
Thus every irreducible component of a closed fiber of $h$ is a rational curve. Therefore  every irreducible component of a closed fiber of $g|_{F}$ is a rational curve. Thus $W_i$ is rationally chain connected.

In Case 3, let $C$ be an arbitrary flipping curve.
We show $C$ is rational.
Note that since $F$ is log terminal, $F$ is \Q-factorial (cf. \cite[Proposition 17.1]{Lipman_Rational_Singularities_with_applications}, \cite[Theorem~14.4]{T2}).
Further note that $(K_F+\Delta_F)\cdot C <0$, $C^2<0$ and $0\leq \mathrm{coeff}_C \Delta_F <1$. In particular, $(K_F + C) \cdot C <0$, which implies by adjunction that $C$ is indeed a (smooth) rational curve.
Thus $W_i$ is rationally chain connected.
\end{step}

\begin{step}
By Step \ref{reduction-mfs}, we can assume that $X$ has a Mori fiber structure $f:X\to Y$.

Since $X$ is a threefold,  $\mathrm{dim}\,Y=0, 1, $ or $2$. If $\mathrm{dim}\,Y=0$, then $X$ is rationally chain connected by Theorem~\ref{rcc-pic-one}. Hence we may further assume that $\mathrm{dim}\,Y=1 $ or $\mathrm{dim}\,Y=2$.


\end{step}

\end{proof}

\subsection{The case where $\mathrm{dim}\,Y=1$.} Here we show the following proposition.

\begin{prop}
\label{prop:RCC-Freg_2}
Let $k$ be an algebraically closed field of characteristic $p\geq 11$.
Let $X$ be a projective globally $F$-regular threefold over $k$.
Then, to prove that $X$ is rationally chain connected we may assume that  $X$ is terminal, $\mathbb{Q}$-factorial and it admits a Mori fiber space structure $f : X \to Y$ with $\dim Y=2$.
\end{prop}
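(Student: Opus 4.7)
The plan is to dispose of the $\dim Y = 1$ case outright, showing that $X$ is already rationally chain connected, so that only $\dim Y = 2$ remains. After invoking Proposition \ref{prop:RCC-Freg_1} we have a Mori fiber space $f \colon X \to Y$ with $X$ a terminal $\mathbb{Q}$-factorial globally $F$-regular threefold; assume $\dim Y = 1$.

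First, since $f_* \mathcal{O}_X = \mathcal{O}_Y$, Lemma \ref{glFness for MMP} transports global $F$-regularity from $X$ to $Y$. A projective globally $F$-regular curve must be $\mathbb{P}^1$ by Proposition \ref{Rationality of glFsurf}, so $Y \cong \mathbb{P}^1$. Next, Theorem \ref{ss1} provides an effective $\mathbb{Q}$-divisor $\Delta$ on $X$ making $(X, \Delta)$ globally $F$-regular, and Theorem \ref{general fiber} yields a dense open $U \subseteq Y$ such that for every geometric point $y \in U$ the pair $(X_y, \Delta_y)$ is globally $F$-regular. In particular, a general geometric fiber $X_y$ is a projective globally $F$-regular surface, hence rational by Proposition \ref{Rationality of glFsurf}, and a fortiori rationally connected.

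With $Y \cong \mathbb{P}^1$ and general fibers of $f$ rationally connected, it remains to glue these pieces into a rational chain inside $X$. The plan is to invoke Proposition \ref{dim1-section} (to be established later in the paper, using \cite{mh} and this being the sole source of the $p \geq 11$ hypothesis) to produce a section of $f$. Once such a section $s \colon \mathbb{P}^1 \to X$ is in hand, two general points $x_1, x_2 \in X$ can be connected by chaining each $x_i$ to $s(f(x_i))$ inside its rationally connected fiber and then traversing the rational curve $s(\mathbb{P}^1) \subseteq X$. Rational chain connectedness for arbitrary, possibly special, pairs of points follows by a standard limit argument using the closedness of the MRCC equivalence relation, cf.~\cite[Ch~IV, \S5]{Kollar2}; equivalently, the MRCC quotient of $X$ has dimension at most $\dim Y = 1$, and the section forces it to contract $\mathbb{P}^1$ to a point, so the quotient is trivial.

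The main obstacle is clearly the section existence step: without a rational section, there is no mechanism to bridge the rational base with the rational fibers into a genuine rational chain on $X$, and this is exactly where the bound $p \geq 11$ enters through \cite{mh}. Everything else in the argument is formal consequence of earlier results in the paper, namely Lemma \ref{glFness for MMP}, Theorem \ref{ss1}, Theorem \ref{general fiber}, and Proposition \ref{Rationality of glFsurf}.
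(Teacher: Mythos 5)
Your proposal is correct and follows essentially the same route as the paper: reduce via Proposition \ref{prop:RCC-Freg_1} to a Mori fiber space, then in the $\dim Y=1$ case deduce $Y\simeq\mathbb{P}^1$ from Lemma \ref{glFness for MMP}, rationality of general fibers from Theorem \ref{general fiber}, and existence of a section from Proposition \ref{dim1-section}, whence $X$ is rationally chain connected. The extra detail you supply on chaining general points through the section and on handling special points is a harmless elaboration of what the paper leaves implicit.
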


We use the following result of Hirokado, and also a result of de Jong--Starr for the proof. Note that the assumption that $ p \geq 11$ in  Proposition \ref{prop:RCC-Freg_2} (and hence in Theorem \ref{0gfr-rcc}) depends only on Hirokado's Theorem.

\begin{thm}[{\cite[Theorem 5.1(2)]{mh}}]\label{Hirokado}
Let $f: X \to C$ be a proper surjective morphism
from a smooth threefold to a smooth curve with
$f_*\mathcal{O}_X=\mathcal{O}_C$.
Suppose the following conditions.
\begin{enumerate}
\item{$p\geq 11$.}
\item{A general fiber of $f$ is normal.}
\item{The anti-canonical divisor of a general fiber is ample.}
\end{enumerate}
Then, general fibers are smooth.
\end{thm}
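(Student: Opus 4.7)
The plan is to reduce Hirokado's theorem to a geometric-regularity question about the generic fiber and then invoke the classification of regular del Pezzo surfaces over imperfect fields.

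First I would pass to the generic fiber. Let $K := k(C)$ and set $X_K := X \times_C \Spec K$. Since $X$ is smooth over $k$, $X_K$ is regular; since $f_* \mathcal{O}_X = \mathcal{O}_C$ we obtain $H^0(X_K, \mathcal{O}_{X_K}) = K$, so $X_K$ is geometrically integral over $K$. Together with the hypothesis that general fibers have ample anti-canonical, this shows that $-K_{X_K}$ is ample on the regular projective surface $X_K$, i.e., $X_K$ is a \emph{regular del Pezzo surface} over the imperfect field $K$. By standard spreading-out, the conclusion ``general fibers of $f$ are smooth'' is equivalent to ``$X_K$ is smooth over $K$'', which for a scheme of finite type over a field is the same as $X_K$ being geometrically regular over $K$.

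The main obstacle is now the classification statement: any regular del Pezzo surface over an imperfect field of characteristic $p \geq 11$ is automatically geometrically regular. I would argue by contradiction. Suppose $X_K$ fails to be geometrically regular; base change to $\overline{K}$ and take the normalization $\pi : \widetilde{Y} \to (X_{\overline{K}})_{\mathrm{red}}$ together with its conductor divisor. Tate's genus-change formula, applied to a general member of a suitable (multi-)anti-canonical pencil, combined with adjunction for $\pi$, yields divisibility constraints of the shape $p \mid 2(p_a - p_g)$ on relevant curve classes. Coupled with the bound $K_{X_K}^2 \leq 9$ on the anti-canonical degree (which persists even in the regular-but-not-smooth setting) and the classification of purely inseparable $\alpha_p$- and $\mu_p$-quotients of smooth surfaces, the pathological possibilities are confined to characteristic $p \leq 7$.

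Granting this classification step, the theorem is immediate: geometric regularity of $X_K$ spreads to smoothness of $f$ over a non-empty open subset of $C$, so general fibers are smooth. The step I expect to be hardest, and where the hypothesis $p \geq 11$ is essential, is the case-by-case classification of non-geometrically-regular regular del Pezzos in low characteristic; the reduction to the generic fiber is by contrast formal. An alternative route I would consider, but expect to be no easier, is to establish a direct vanishing or $F$-splitting criterion on $X_K$ forcing smoothness, bypassing the normalization analysis; however, the arithmetic obstruction to smoothness in small characteristic seems fundamental, so the classification approach appears to be unavoidable.
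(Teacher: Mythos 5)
The paper does not prove this statement: it is quoted verbatim from Hirokado's Theorem 5.1(2), whose proof runs through his local analysis of deformations of rational double points in characteristic $p$ --- a normal del Pezzo surface occurring as a general fiber of a smooth threefold over a curve has only rational double points, and Hirokado shows that a smooth total space with singular general fibers forces $p\leq 7$. Your route is different in kind: you move everything to the generic fiber $X_K$ and appeal to a classification of regular del Pezzo surfaces over imperfect fields. The reduction itself is essentially formal, as you say, with one small caveat: geometric reducedness (hence integrality) of $X_K$ does not follow from $H^0(X_K,\mathcal{O}_{X_K})=K$ together with regularity alone --- a regular proper variety over an imperfect field can fail to be geometrically reduced even when its ring of global functions is the base field --- so at this point you must actually invoke hypothesis (2), which you list but never use.

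The genuine gap is that the entire content of the theorem is concentrated in the one step you do not prove: that a regular del Pezzo surface over an imperfect field of characteristic $p\geq 11$ is geometrically regular. This is not a routine consequence of Tate's genus-change formula plus ``the classification of $\alpha_p$- and $\mu_p$-quotients''; it is a hard theorem in its own right, established only in work postdating this paper (Fanelli--Schr\"oer, Patakfalvi--Waldron, Bernasconi--Tanaka, Ji--Waldron), and even there the passage from geometric normality to geometric regularity requires a delicate, degree-by-degree analysis of the conductor of the normalization of $X_{\overline{K}}$ and of its singularities, combining $K_{X_K}^2\leq 9$ with the $(p-1)/2$ lower bound on genus change; the naive divisibility constraints you indicate do not by themselves isolate the bound $p\geq 11$. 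As written, your argument reduces the stated theorem to an unproved theorem of at least comparable depth, so it is not a proof. Either carry out that classification (a substantial project) or cite it --- but in the latter case you are in the same position as the authors, who simply cite Hirokado.
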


\begin{thm}\label{ghs-type}\label{ghs} Let $f:X \to C$ be a proper surjective morphism
from a projective variety to a smooth projective curve $C$. Suppose the general fiber of $f$ is a smooth and separably rationally connected variety, then $f$ has a section.
 \end{thm}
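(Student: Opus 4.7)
My plan is to invoke the theorem of de Jong and Starr in \cite{dj-st}, which is precisely the positive-characteristic analogue of the Graber-Harris-Starr theorem and whose hypotheses match those of the statement verbatim: a proper morphism from a projective variety to a smooth projective curve whose geometric generic fiber is smooth and separably rationally connected. The statement of Theorem~\ref{ghs-type} is then a direct citation, so the proof reduces to verifying the hypotheses.

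The heart of the de Jong-Starr result, and the reason the separability hypothesis is indispensable in characteristic $p$, is that separable rational connectedness supplies a very free rational curve in the geometric generic fiber, that is, a morphism $\mathbb{P}^1 \to X_{\bar\eta}$ whose restricted tangent bundle is ample. Such a curve spreads out over $C$ and produces a family of rational curves in $X$ whose deformation theory is unobstructed at the general point, which is exactly the input needed for the comb-and-smoothing construction. In characteristic zero, generic smoothness makes mere rational connectedness already imply separable rational connectedness, and no extra hypothesis is needed; in characteristic $p$ one must input separability by hand, and this is why it appears explicitly in the statement.

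Were I to reconstruct the proof from scratch, the main steps would be: (i) reduce, if necessary, to a setting in which the total space is smooth (e.g.\ by passing to a resolution of singularities away from a closed subset missed by the geometric generic fiber), noting that a section of the resolution maps to a section of $f$ because $C$ is one-dimensional and the relevant map is proper birational; (ii) build a comb whose handle is a multisection of $f$ and whose teeth are very free rational curves attached at general points of $C$, using the relative Kontsevich or Hilbert moduli space of rational curves; and (iii) deform the comb to an actual section, using vanishing of the relevant obstruction group on account of the ampleness of the normal bundle to the teeth.

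The genuinely hard step is (iii), and that is precisely the content of the work of de Jong and Starr; all the delicate positive-characteristic issues (smoothness of the evaluation map, Bertini-type statements needed to guarantee a suitably generic attaching point, existence of enough combs in a given component of the moduli of stable maps) reside there. Since this theorem is already in the literature, my proof will simply cite it after checking that the stated hypotheses of Theorem~\ref{ghs-type} fall within its scope.
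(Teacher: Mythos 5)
Your proposal matches the paper's proof, which is exactly a citation of the de Jong--Starr theorem (cf.\ the Graber--Harris--Starr theorem in characteristic zero) after one hypothesis check. The only verification the paper actually records is that $f$ is flat --- automatic here because $X$ is integral and $C$ is a smooth curve --- so your plan of ``checking that the stated hypotheses fall within its scope'' is precisely what is needed, and the sketched comb-smoothing internals are not required.
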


\begin{proof}[Proof of Theorem \ref{ghs}]
Since $X$ is an integral scheme and $C$ is a smooth curve,  $f$ is flat.
Then the assertion follows from \cite[Theorem]{dj-st} (cf.  \cite[Theorem 1.1]{ghs}).
\end{proof}

The above is the famous Graber--Harris--Starr theorem for separably rationally connected varieties. Moreover we use the fact that three dimensional terminal singularities are isolated:
\begin{prop}[{\cite[Corollary 2.30]{Kollar3}}]\label{isolated}Let $X$ be a terminal threefold over an algebraically closed field of any characteristic. Then $X$ has only isolated singularities.
 \end{prop}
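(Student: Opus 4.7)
The plan is to deduce the result from the more general fact that a terminal singularity is regular in codimension $2$; in the three-dimensional setting this forces $X_{\mathrm{sing}}$ to have codimension at least $3$, hence to consist of isolated closed points.

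First I would argue by contradiction. Suppose the singular locus of $X$ contains an irreducible curve $C$, and let $\xi$ be its generic point. The local ring $\mathcal{O}_{X,\xi}$ is two-dimensional, and the terminal condition is preserved under localization: any log resolution $\pi : Y \to X$ restricts to a log resolution over $\Spec \mathcal{O}_{X,\xi}$, and the exceptional discrepancies are computed in exactly the same way, so they remain strictly positive. It therefore suffices to prove that every two-dimensional terminal singularity is smooth; once this is established, $\mathcal{O}_{X,\xi}$ is regular, contradicting $\xi \in X_{\mathrm{sing}}$.

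For the two-dimensional claim I would invoke the minimal resolution, which exists in every characteristic by Lipman's theorem on normal surface singularities. Let $\pi : \widetilde{S} \to S$ be the minimal resolution, with exceptional curves $E_1,\dots,E_r$, and write $K_{\widetilde{S}} = \pi^*K_S + \sum a_i E_i$. Minimality gives $K_{\widetilde{S}} \cdot E_i \geq 0$ for every $i$, hence $\bigl(\sum a_i E_i\bigr)\cdot E_j \geq 0$ for every $j$, since $\pi^*K_S$ is numerically $\pi$-trivial. Negative definiteness of the intersection matrix $(E_i \cdot E_j)$ then forces $\sum a_i E_i \leq 0$, i.e.\ every $a_i \leq 0$. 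But the terminal hypothesis requires $a_i > 0$ strictly, so the exceptional set must be empty, meaning $\pi$ is an isomorphism and $S$ is smooth.

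The main obstacle, mild in this case, is verifying that each ingredient of the surface step genuinely goes through in positive characteristic: existence of the minimal resolution, the inequality $K_{\widetilde{S}} \cdot E_i \geq 0$ on its exceptional components, and negative definiteness of the intersection form. All three are classical and characteristic-free for normal surface singularities. With these in hand, the codimension-$2$ localization step is essentially formal and concludes the proof.
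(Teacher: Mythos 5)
Your argument is correct and is essentially the proof of the cited result \cite[Corollary 2.30]{Kollar3} itself (the paper offers no proof of its own, only the citation): localize at the generic point $\xi$ of a putative curve in the singular locus, note that terminality passes to $\Spec \mathcal{O}_{X,\xi}$, and rule out a two-dimensional terminal singularity via the minimal resolution together with negative definiteness of the exceptional intersection form --- all characteristic-free. The one point to make explicit is that $k(\xi)$ is imperfect and not algebraically closed, so the surface step must be carried out for excellent two-dimensional local schemes in the sense of Lipman rather than for surfaces over an algebraically closed field; this is exactly the generality in which the cited source works.
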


\begin{prop}\label{dim1-section}
Let $f:X\to Y$ be a projective surjective morphism between normal varieties with $f_*\mathcal{O}_X=\mathcal{O}_Y$.
Assume the following conditions.
\begin{enumerate}
\item{$p\geq 11$.}
\item{$X$ is a terminal globally $F$-regular threefold.}
\item{$-K_X$ is $f$-ample.}
\item{$\dim Y=1.$}
\end{enumerate}
Then, there is a section of $f$, that is, $s:Y\to X$
such that $f\circ s={\rm id}_Y.$
\end{prop}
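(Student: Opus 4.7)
The plan is to verify the hypotheses of the Graber--Harris--Starr type theorem \ref{ghs-type}, which supplies a section once the generic fibre of $f$ is known to be smooth and separably rationally connected. So two things are needed: smoothness of a general fibre, and its SRC.

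For smoothness, I will apply Hirokado's Theorem \ref{Hirokado}, whose hypotheses require a smooth threefold mapping to a smooth curve. The base is harmless: $Y$ is normal of dimension one and, being the image under $f$ of the projective variety $X$, is projective, hence a smooth projective curve. For the total space, Proposition \ref{isolated} tells us the singular locus of $X$ consists of finitely many closed points, whose image $S\subset Y$ is also finite. Setting $U:=Y\setminus S$, the restriction $f_U:f^{-1}(U)\to U$ is a proper surjective morphism from a smooth quasi-projective threefold to a smooth curve with $(f_U)_*\mathcal{O}_{f^{-1}(U)}=\mathcal{O}_U$ and $-K_{f^{-1}(U)}$ still $f_U$-ample. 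Theorem \ref{ss1} yields an effective $\mathbb{Q}$-divisor $\Delta$ making $(X,\Delta)$ globally $F$-regular, and Theorem \ref{general fiber} then lets us shrink $U$ further so that for every closed point $y\in U$ the geometric fibre $X_y$ is globally $F$-regular, and in particular normal. The restriction of $-K_X$ is ample on each such fibre, so Hirokado's theorem applies (this is exactly where $p\geq 11$ is used) and general fibres of $f_U$, hence of $f$, are smooth.

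A smooth general fibre $X_y$ is then a smooth projective globally $F$-regular surface, hence rational by Proposition \ref{Rationality of glFsurf}; and smooth rational projective surfaces are SRC (a standard fact, e.g.\ by pulling back a very free curve on $\mathbb{P}^2$ through a common resolution of a birational map $X_y\dashrightarrow\mathbb{P}^2$). Since $f$ is flat (a dominant morphism from the integral scheme $X$ to the regular one-dimensional scheme $Y$ is automatically flat), openness of smoothness and of very-freeness in proper flat families promotes the conclusion from a general closed fibre to the geometric generic fibre. Theorem \ref{ghs-type} now delivers the desired section $s:Y\to X$. The main obstacle is Hirokado's smoothness theorem, whose characteristic condition $p\geq 11$ is precisely what forces the corresponding hypothesis here; the use of Proposition \ref{isolated} is essential in order to reduce to the smooth-total-space setting that Hirokado's theorem requires.
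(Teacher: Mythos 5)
Your proposal is correct and follows essentially the same route as the paper's own proof: use Proposition \ref{isolated} to shrink $Y$ so that the total space becomes smooth, invoke Theorem \ref{general fiber} for normality of general fibres and Hirokado's Theorem \ref{Hirokado} for their smoothness, and conclude with the de Jong--Starr/GHS-type Theorem \ref{ghs-type}. You also make explicit a step the paper leaves implicit, namely that the smooth general fibre is a globally $F$-regular, hence rational, hence SRC surface, which is exactly the hypothesis Theorem \ref{ghs-type} needs.
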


\begin{proof}
By Theorem~\ref{general fiber},
general fibers of $f$ are globally $F$-regular.
In particular, they are normal.
Since terminal singularities are isolated,
by shrinking $Y$ temporarily,
we may assume $X$ is smooth.
Thus, we can apply Theorem~\ref{Hirokado} and
there is a smooth fiber $F$ of $f$.
Then, Theorem~\ref{ghs-type} implies the assertion.
\end{proof}

\begin{proof}[Proof of Proposition \autoref{prop:RCC-Freg_2}]
According to Proposition \autoref{prop:RCC-Freg_1} we may assume that $X$ is terminal, $\mathbb{Q}$-factorial and it admits a Mori fiber space structure $f : X \to Y$ with $\dim Y=1$ or $2$. If  $\dim Y=1$, then
 Lemma \ref{glFness for MMP} implies $Y\simeq \mathbb{P}^1_k$.
By Theorem~\ref{general fiber}, general fibers of $f$ are rational.
By Proposition \ref{dim1-section}, $f:X\rightarrow Y$ has a section.
Thus, $X$ is rationally chain connected and so we may assume that $\dim Y = 2$ in order to prove Theorem \ref{0gfr-rcc}.
\end{proof}

\subsection{The case where $\dim Y=2$} Here we conclude the proof Theorem \ref{0gfr-rcc}.
First we show the following lemma:
\begin{lem}\label{p1-fib-lemma}
Let $f:X\to Y$ be a projective surjective morphism between normal varieties with $f_*\mathcal{O}_X=\mathcal{O}_Y$.
Assume the following conditions.
\begin{enumerate}
\item{$X$ is a globally $F$-regular variety.}
\item{$\dim X-\dim Y=1$. }
\end{enumerate}
Then, there exists a non-empty open subset $Y'\subset Y$ which satisfies the following properties.
\begin{itemize}
\item{Every fiber over $y\in Y'$ is isomorphic to $\mathbb P^1$.}
\item{If $C_Y$ is a projective curve in $Y$ which intersects with $Y'$,
then the normalization morphism
$\nu:\overline{C_Y} \to C_Y$ of $C_Y$ factors through $f|_{f^{-1}(C_Y)}:f^{-1}(C_Y)\to C_Y$:
$$\nu:\overline{C_Y} \to f^{-1}(C_Y)\xrightarrow{f|_{f^{-1}(C_Y)}} C_Y.$$ }
\end{itemize}
\end{lem}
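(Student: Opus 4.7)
The plan is to combine Theorem \ref{general fiber} with Proposition \ref{Rationality of glFsurf} to identify the general fibers, and then invoke the de Jong--Starr theorem (Theorem \ref{ghs}) on a suitable base change to produce the desired sections.

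First I would apply Theorem \ref{general fiber} to $f$ (with $\Delta = 0$) to obtain a non-empty open set $U \subseteq Y$ such that $X_y$ is globally $F$-regular for every perfect/geometric point $y \in U$. Since $\dim X_y = 1$, Proposition \ref{Rationality of glFsurf} forces $X_y \cong \mathbb{P}^1$. After intersecting $U$ with the (non-empty open) locus where $f$ is flat, I obtain the open set $Y' \subseteq Y$ verifying the first bullet.

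For the second bullet, let $C_Y \subseteq Y$ be an irreducible projective curve with $C_Y \cap Y' \neq \emptyset$ and let $\nu : \overline{C_Y} \to C_Y$ denote normalization, so $\overline{C_Y}$ is a smooth projective curve. Form the base change $\tilde{X} := X \times_Y \overline{C_Y}$ with its projection $\tilde{f} : \tilde{X} \to \overline{C_Y}$, and let $\tilde{X}^{\circ}$ be the unique irreducible component of $\tilde{X}$ dominating $\overline{C_Y}$, endowed with its reduced scheme structure. Set $V := \nu^{-1}(Y') \subseteq \overline{C_Y}$, which is non-empty because $\nu$ is surjective and $C_Y \cap Y' \neq \emptyset$. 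Over $V$, every closed fiber of $\tilde{f}$ is $\mathbb{P}^1$ (by base change from $Y'$), so $\tilde{X}|_V$ is irreducible and must coincide with $\tilde{X}^{\circ}|_V$. Hence $\tilde{X}^{\circ}$ is a projective variety with a projective surjective morphism $\tilde{X}^{\circ} \to \overline{C_Y}$ whose general fiber is $\mathbb{P}^1$, which is smooth and separably rationally connected.

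By Theorem \ref{ghs} there exists a section $s : \overline{C_Y} \to \tilde{X}^{\circ}$. The composition
\[
\overline{C_Y} \xrightarrow{s} \tilde{X}^{\circ} \hookrightarrow \tilde{X} \xrightarrow{\mathrm{pr}_X} f^{-1}(C_Y)
\]
yields the desired morphism; since $\mathrm{pr}_{\overline{C_Y}} \circ s = \mathrm{id}_{\overline{C_Y}}$ and the fiber square satisfies $f \circ \mathrm{pr}_X = \nu \circ \mathrm{pr}_{\overline{C_Y}}$, post-composing with $f|_{f^{-1}(C_Y)}$ recovers $\nu$. The main technical obstacle I anticipate is verifying the hypotheses of Theorem \ref{ghs}: one must carefully track irreducible components and reduced structures to ensure that $\tilde{X}^{\circ}$ is an integral projective scheme with smooth, separably rationally connected generic fiber, as the base change $\tilde{X}$ need not be integral a priori.
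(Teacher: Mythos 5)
Your proposal is correct and follows essentially the same route as the paper: identify the general fibers as $\mathbb P^1$ via Theorem \ref{general fiber} (plus the rationality of globally $F$-regular curves), base change $f^{-1}(C_Y)\to C_Y$ along the normalization $\overline{C_Y}$, and apply the de Jong--Starr theorem to produce a section. Your extra care in passing to the reduced dominant irreducible component before invoking Theorem \ref{ghs} is a welcome refinement of a point the paper's proof glosses over, but it is not a different argument.
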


\begin{proof}[Proof of Lemma \ref{p1-fib-lemma}]
We may assume $f$ is flat over $Y'$.
Moreover, by Theorem~\ref{general fiber},
we may assume that every fiber over $Y'$ is $\mathbb P^1$.
Thus, for every projective curve $C_Y$ on $Y$ intersecting $Y'$, the general fibers of
$S_Y:=f^{-1}(C_Y)\to C_Y$ are $\mathbb P^1$.
Take the base change of $S_Y \to C_Y$ to the normalization $\overline{C_Y}$ of $C_Y$ to obtain
$\overline{S_Y} \to \overline{C_Y}$.
Then, a morphism $\overline{S_Y} \to \overline{C_Y}$ is a flat projective morphism whose general fibers are $\mathbb P^1$.
Hence $\overline{S_Y}\to \overline{C_Y}$ has a section by \cite[Theorem]{dj-st} and we are done.
\end{proof}

The rationally chain connectedness when $\mathrm{dim}\,Y=2 $ follows from the following proposition:

\begin{prop}\label{dim2-rcc}
Let $f:X\to Y$ be a surjective morphism between projective normal varieties with $f_*\mathcal{O}_X=\mathcal{O}_Y$.
Assume the following conditions.
\begin{enumerate}
\item{$X$ is a globally $F$-regular threefold.}
\item{$\dim Y=2$. }
\end{enumerate}
Then, $X$ is rationally chain connected.
\end{prop}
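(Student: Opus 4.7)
The plan is to combine the rationality of $Y$ with the generic $\mathbb{P}^1$-fibration structure provided by Lemma~\ref{p1-fib-lemma}. First, since $f_*\mathcal{O}_X=\mathcal{O}_Y$ and $X$ is globally $F$-regular, Lemma~\ref{glFness for MMP} gives that $Y$ is a globally $F$-regular projective surface, and hence $Y$ is rational by Proposition~\ref{Rationality of glFsurf}. Also, by Lemma~\ref{base_change_rcc} together with Corollary~\ref{cor.GloballyFRegBaseChange}, we may freely assume $k$ is uncountable.

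I would then apply Lemma~\ref{p1-fib-lemma} to $f$, obtaining a non-empty open subset $Y'\subseteq Y$ such that $X_y\cong\mathbb{P}^1_k$ for every $y\in Y'$ and such that for every projective curve $C_Y\subseteq Y$ meeting $Y'$, the normalization $\overline{C_Y}\to C_Y$ factors through $f^{-1}(C_Y)\to C_Y$. Whenever $C_Y$ is itself rational, this factorization gives a morphism $\mathbb{P}^1\cong\overline{C_Y}\to X$ whose image $R\subseteq X$ is a rational curve mapping surjectively onto $C_Y$; in particular $R\cap X_y\neq\emptyset$ for every $y\in C_Y$.

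Given two general closed points $x_1,x_2\in X$, set $y_i:=f(x_i)$. Then $y_i\in Y'$ and $x_i\in X_{y_i}\cong\mathbb{P}^1$. Since $Y$ is a rational surface, a pencil argument on a birational model $Y\dashrightarrow\mathbb{P}^2$ produces a rational curve $C_Y\subseteq Y$ passing through both $y_1$ and $y_2$ and meeting $Y'$. The construction above yields a rational curve $R\subset X$ together with points $x_i'\in R\cap X_{y_i}$. Concatenating the three rational curves
\[
X_{y_1}\;\cup\;R\;\cup\;X_{y_2}
\]
one obtains a chain of rational curves in $X$ joining $x_1$ to $x_2$.

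To conclude, I would carry out this construction in families: parameterizing the rational curves $C_Y$ through two varying points $y_1,y_2\in Y$ (via a birational model $Y\dashrightarrow\mathbb{P}^2$ together with the family of lines in $\mathbb{P}^2$), and taking the induced families of lifts $R$ and of fibers $X_{y_i}$, one obtains a proper family of connected curves with only rational components whose double evaluation map dominates $X\times X$. By Definition~\ref{dfn.rationallyConnected}(1) this proves that $X$ is rationally chain connected. The only technical point requiring care is ensuring that the rational curve $C_Y$ joining two general points of $Y$ can be chosen to meet $Y'$, which is automatic because $Y'\subseteq Y$ is open dense and the family of candidate curves moves in a positive-dimensional linear system on $Y$.
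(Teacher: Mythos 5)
Your proposal is correct and follows essentially the same route as the paper: reduce to an uncountable base field, use Lemma~\ref{glFness for MMP} and the rationality of globally $F$-regular surfaces to connect general points of $Y$ by a rational curve, and then chain the two $\mathbb{P}^1$-fibers with the lift of that curve provided by Lemma~\ref{p1-fib-lemma}. The paper simply compresses your final family-of-curves paragraph into the standard fact that, over an uncountable field, connecting two general points by rational chains suffices for rational chain connectedness.
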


\begin{proof}[Proof of Proposition \ref{dim2-rcc}]
We may assume that the base field $k$ is uncountable by  Corollary \ref{cor.GloballyFRegBaseChange} and Lemma \ref{base_change_rcc}.
Then, it is sufficient to show that two general points $x_1$ and $x_2$ are connected by rational curves. Since $X$ is globally $F$-regular, so is $Y$ by Lemma \ref{glFness for MMP}. In particular, $Y$ is a rational surface. Then there is an open set $Y' \subseteq Y$, such that any two points of $Y'$ can be connected with a rational curve. By further restricting $Y'$, we may assume that it also satisfies the conclusions of  Lemma~\ref{p1-fib-lemma}.

Fix then $x_1, x_2\in f^{-1}(Y')$.
Let $y_1:=f(x_1)$ and $y_2:=f(x_2)$.
Lemma~\ref{p1-fib-lemma} implies that
the fibers $f^{-1}(y_1)$ and $f^{-1}(y_2)$ are $\mathbb{P}^1$.
Let $C_Y$ be a rational curve connecting $y_1$ and $y_2$.
Then, $x_1$ and $x_2$ are connected by three rational curves:
$f^{-1}(y_1), f^{-1}(y_2)$ and the image of $\overline{C_Y} \to f^{-1}(C_Y)$ as in Lemma~\ref{p1-fib-lemma} where
$\overline{C_Y}$ is the normalization of $C_Y$.
\end{proof}

\begin{proof}[Proof of Theorem \ref{0gfr-rcc}]
Combining Propositions \ref{prop:RCC-Freg_2} and \ref{dim2-rcc} completes the proof.
\end{proof}

\section{On separable rational connectedness}\label{section-src}

In this section, we work over an algebraically closed field $k$ of characteristic $p>0$. In Section \ref{section-main} we showed that  projective globally $F$-regular threefolds are rationally chain connected. Here we address the question whether they are separably rationally connected as well. Recall from Definition \ref{dfn.rationallyConnected} that the latter is a stronger notion.  Since the separably rationally conneted property  is preserved under birational maps, by running the MMP  one can reduce the question to the case of a  threefold which  is either $\mathbb{Q}$-Fano or admits a Mori fiber space structure over a curve or a surface. Here we only consider the second case and the main result is the following:

\begin{thm}\label{dim1-section-2}
Let $f:X\to Y$ be a projective surjective morphism between normal varieties over $k$ with $f_*\mathcal{O}_X=\mathcal{O}_Y$.  Assume that the following conditions hold.
\begin{enumerate}
\item{$p\geq 11$.}
\item{$X$ is a terminal globally $F$-regular threefold.}
\item{$-K_X$ is $f$-ample.}
\item{$\dim Y=1$ or $\dim Y=2.$}
\end{enumerate}
Then $X$ is separably rationally connected.
\end{thm}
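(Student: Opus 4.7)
The plan is to construct a smooth projective birational model $\pi : \tilde X \to X$ equipped with a morphism $\tilde f$ to a smooth SRC base, verify the hypotheses of Theorem \ref{0src} for $\tilde f$ to conclude $\tilde X$ is SRC, and then descend SRC from $\tilde X$ to $X$ along the birational surjection $\pi$. For the descent step, given a family $\tilde u : \tilde B \times \mathbb{P}^1 \to \tilde X$ witnessing SRC with double evaluation $\tilde u^{(2)}$ smooth at the generic point of the source, the composition $u := \pi \circ \tilde u$ satisfies $u^{(2)} = (\pi \times \pi) \circ \tilde u^{(2)}$. Since $\pi$ is birational, $\pi \times \pi$ is an isomorphism at the generic point of $X \times X$, so smoothness at the generic point and dominance of the double evaluation map are both preserved, giving that $X$ is SRC. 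Resolutions of singularities in dimension three are available by \cite{cp1}, \cite{cp2}, \cite{cut}.

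\textbf{Case $\dim Y = 1$.} By Lemma \ref{glFness for MMP}, $Y$ is a globally $F$-regular curve, so $Y \cong \mathbb{P}^1$ by Proposition \ref{Rationality of glFsurf}, which is SRC. Let $\pi : \tilde X \to X$ be a resolution and set $\tilde f := f \circ \pi : \tilde X \to Y$. Since terminal threefolds have only isolated singularities (Proposition \ref{isolated}), $\pi$ is an isomorphism away from finitely many fibers, so the general fibers of $\tilde f$ and of $f$ coincide. Theorem \ref{general fiber} implies these general fibers are globally $F$-regular (hence normal) surfaces, and $-K$ is ample on them since $-K_X$ is $f$-ample. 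Hirokado's Theorem \ref{Hirokado}, which requires $p \geq 11$, then produces smoothness of these general fibers. Consequently the geometric generic fiber of $\tilde f$ is a smooth globally $F$-regular surface with $-K$ ample, which is rational by Proposition \ref{Rationality of glFsurf}, hence SRC. Applying Theorem \ref{0src} to $\tilde f$ gives that $\tilde X$ is SRC, and the descent step concludes.

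\textbf{Case $\dim Y = 2$.} Again $Y$ is globally $F$-regular by Lemma \ref{glFness for MMP}, hence rational by Proposition \ref{Rationality of glFsurf}. Let $\rho : \tilde Y \to Y$ be a resolution, so that $\tilde Y$ is a smooth projective rational surface and therefore SRC. Take $\tilde X$ to be a smooth resolution of the main component of $X \times_Y \tilde Y$, giving a birational surjection $\pi : \tilde X \to X$ together with a morphism $\tilde f : \tilde X \to \tilde Y$. The geometric generic fiber of $f$ is globally $F$-regular, by Theorem \ref{general fiber} applied at the geometric generic point of $Y$ (which is perfect), and one-dimensional, hence is $\mathbb{P}^1$ by Proposition \ref{Rationality of glFsurf}. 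Since $\rho$ is an isomorphism over a dense open, the geometric generic fiber of $\tilde f$ is birational to $\mathbb{P}^1$ and smooth, hence equals $\mathbb{P}^1$, which is SRC. Theorem \ref{0src} again applies and the descent concludes.

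The principal subtle step is the smoothness of general fibers in the $\dim Y = 1$ case, which depends on Hirokado's Theorem \ref{Hirokado} and is the sole reason for the hypothesis $p \geq 11$ in this theorem beyond what is already needed to set up $X$. Everything else is a careful assembly of resolutions of singularities together with Theorem \ref{general fiber}, Theorem \ref{0src}, and the birational descent of SRC described at the outset.
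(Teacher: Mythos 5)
Your proposal is correct and follows essentially the same route as the paper: reduce to Proposition \ref{dim1-src} (Theorem \ref{0src}) by passing to smooth models, using Theorem \ref{general fiber} to identify the fibers, Hirokado's theorem for smoothness of general fibers when $\dim Y=1$, and birational invariance of SRC to descend back to $X$. The only difference is that you carry out the resolutions explicitly in the $\dim Y=2$ case, which the paper glosses over and relegates to a remark; your justification that the geometric generic fiber of $\tilde f$ is smooth should really come from identifying it with the geometric generic fiber of $f$ (globally $F$-regular, hence a normal, hence smooth, curve) rather than from generic smoothness, but that identification is already implicit in your setup.
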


\begin{rem} For $\mathbb{Q}$-Fano varieties over fields with positive characteristic, the separable rational connectedness is still unknown. Even for Fano varieties, this  question remains mysterious.  Recently, there has been some progress in this direction, for which we refer the reader to \cite{Zh},\cite{CZ} and \cite{Ti}.
\end{rem}

The proof of Theorem~\ref{dim1-section-2} comes from smoothing combs on families of separably rationally connected varieties. Let us briefly recall the  definitions of a comb and its smoothing.

\begin{dfn}\label{defcomb}
A comb  with $n$ {\it teeth} over $k$ is a projective curve $C$
with $n+1$ irreducible components $C_0,C_1,\dots,C_n$
such that the curves $C_1,\dots,C_n$ are smooth rational curves
disjoint from each other, and each of them meets $C_0$
transversely in a single smooth point of $C_0$. The curve $C_0$ is called the {\it handle} of the comb,
and $C_1,\dots,C_n$ the {\it teeth}.
\end{dfn}
\begin{dfn}
Let $g:C\rightarrow X$ be a morphism from a comb $C$ to a smooth variety $X$ over $k$. A {\it smoothing} of $g$ is a family $\Sigma\rightarrow T$ over a pointed curve $(T, 0)$ together with a morphism $G : \Sigma\rightarrow X$ such that $\Sigma_0\cong C$, $G|_{\Sigma_0}=g$ holds over $0 \in T$ and  $\Sigma_t$
is smooth for general $t\in T$.
\[
\xymatrix{
\\
C \cong \Sigma_0 \ar[d] \ar@/^2pc/[rr]^g \ar@{^{(}->}[r] & \Sigma \ar[r]_-G \ar[d] & X \\
\{0\} \ar@{^{(}->}[r] &  T
}
\]
\end{dfn}



The following is the main fundamental results about separably rationally connected varieties.
\begin{thm}[{\cite[IV. Theorem 3.7]{Kollar2}} ]
\label{thm:SRC_fundamental}
Let $X$ be a smooth  variety over $k$. Then $X$ is SRC if and only if there is a very free curve $g: \mathbb{P}_k^1 \to X$ in $X$,  i.e. $g^{*}T_X$ is ample. 
\end{thm}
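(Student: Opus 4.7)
The plan is to translate the SRC condition into cohomological conditions on $g^*T_X$ for a rational curve $g \colon \mathbb{P}^1 \to X$, using deformation theory of morphisms. Recall that by Grothendieck's splitting theorem, for any morphism $g \colon \mathbb{P}^1 \to X$ to a smooth variety one has $g^*T_X \cong \bigoplus_{i=1}^{\dim X}\mathcal{O}_{\mathbb{P}^1}(a_i)$, and ampleness of $g^*T_X$ is equivalent to every $a_i \geq 1$. The key point is that the space $\mathrm{Mor}(\mathbb{P}^1, X)$ has tangent space $H^0(\mathbb{P}^1, g^*T_X)$ and obstruction space $H^1(\mathbb{P}^1, g^*T_X)$ at $[g]$, and after fixing $g(0)$ and $g(\infty)$ these become $H^0(g^*T_X(-2))$ and $H^1(g^*T_X(-2))$ respectively. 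Hence the two-point evaluation $\mathrm{ev}_{0,\infty} \colon \mathrm{Mor}(\mathbb{P}^1, X) \to X \times X$ is smooth at $[g]$ of the expected relative dimension if and only if $H^1(\mathbb{P}^1, g^*T_X(-2)) = 0$, which holds precisely when every $a_i \geq 1$, i.e.\ when $g$ is very free.

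For the direction \emph{SRC $\Rightarrow$ very free curve exists}, start from the family $U = Y \times \mathbb{P}^1 \to Y$ with cycle map $u \colon U \to X$ from Definition \ref{dfn.rationallyConnected}, and with $u^{(2)} \colon U \times_Y U \to X \times_k X$ dominant and smooth at the generic point. By the universal property of $\mathrm{Mor}$, the family induces a classifying map $Y \to \mathrm{Mor}(\mathbb{P}^1, X)$ through which $u^{(2)}$ factors as the composition with $\mathrm{ev}_{0, \infty}$. Generic smoothness of $u^{(2)}$ then forces $\mathrm{ev}_{0,\infty}$ to be smooth at a general $[g_y] \in \mathrm{Mor}(\mathbb{P}^1, X)$ in the image, so by the criterion above, $g_y$ is very free.

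For the converse \emph{very free curve $\Rightarrow$ SRC}, suppose $g_0 \colon \mathbb{P}^1 \to X$ is very free. Very freeness implies that $g_0$ deforms with prescribed value at any single general point, so through every general point of $X$ one obtains many very free deformations of $g_0$. Given two general points $x_1, x_2 \in X$, join them by any auxiliary curve $C_0$ (for instance a general complete intersection curve) and attach deformations of $g_0$ as teeth of a comb $C$ with handle $C_0$, choosing the attachment points in general position and arranging that one tooth passes through $x_1$ and another through $x_2$. With sufficiently many very free teeth, the ampleness of $g_0^*T_X$ together with an elementary computation on the normal bundle of the comb makes $H^1(C, N_{C/X}) = 0$, so the comb admits a smoothing $G \colon \Sigma \to X$ over a pointed curve $(T, 0)$ with $\Sigma_t$ a smooth rational curve for general $t$. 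Carrying out this construction universally over $X \times X$ produces the required family $Y \times \mathbb{P}^1 \to X$ whose double evaluation is dominant and generically smooth (the latter by the cohomological criterion applied to the smoothed curves, which inherit very freeness from the teeth).

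The main obstacle is the comb-smoothing step in positive characteristic, specifically verifying that attaching sufficiently many very free teeth to a handle produces an unobstructed deformation whose general member is a smooth rational curve very free in $X$. This requires the standard but delicate incidence-variety argument of Koll\'ar--Miyaoka--Mori \cite[Ch.\ II \S 7 and Ch.\ IV \S 3]{Kollar2}, propagating ampleness from $g_0^*T_X$ through the normal bundle of the comb; once that is granted, both implications follow cleanly from the cohomological dictionary between very freeness and smoothness of evaluation maps.
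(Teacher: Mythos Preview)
The paper does not prove this theorem at all; it simply quotes it from Koll\'ar's book as a background result. So there is no ``paper's own proof'' to compare against, and what follows is an assessment of your sketch on its own merits.

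Your direction \emph{SRC $\Rightarrow$ very free} is essentially right, though the phrase ``$u^{(2)}$ factors as the composition with $\mathrm{ev}_{0,\infty}$'' is imprecise: $u^{(2)}$ has source $Y\times\mathbb{P}^1\times\mathbb{P}^1$, not $Y$. What you want is that the differential of $u^{(2)}$ at a general point $(y,t,s)$ surjects onto $T_{u_y(t)}X\oplus T_{u_y(s)}X$, and this differential factors through $H^0(\mathbb{P}^1,u_y^*T_X)\to T_{u_y(t)}X\oplus T_{u_y(s)}X$ (the $T_t\mathbb{P}^1$ and $T_s\mathbb{P}^1$ contributions land in the image of this evaluation since $T_{\mathbb{P}^1}$ is globally generated). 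Surjectivity of evaluation at two general points then forces all $a_i\geq 1$.

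The direction \emph{very free $\Rightarrow$ SRC}, however, is substantially more complicated than necessary, and the complication you introduce is exactly the part you flag as ``the main obstacle''. No combs, auxiliary handles, or smoothing are needed. Given a single very free $g_0:\mathbb{P}^1\to X$, take $Y$ to be an open neighbourhood of $[g_0]$ in $\mathrm{Mor}(\mathbb{P}^1,X)$ on which the universal two-point evaluation is smooth (such a neighbourhood exists precisely because $H^1(g_0^*T_X(-2))=0$, as you yourself explain). With $U=Y\times\mathbb{P}^1$ and $u$ the universal morphism, the map $u^{(2)}:Y\times\mathbb{P}^1\times\mathbb{P}^1\to X\times X$ is smooth on a nonempty open set, hence smooth at the generic point and dominant (smooth morphisms are open). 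That is already the SRC witness. Your comb construction would work in principle, but it imports the most delicate machinery of \cite{Kollar2} to prove something that follows in one line from the deformation-theoretic criterion you set up at the start.
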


To begin with the machinery, we present a smoothing of comb theorem for a 1-dimensional family of separably rationally connected varieties, it was already implicitly stated in the celebrated paper \cite{ghs} and \cite{HT}.
\begin{thm}\label{rel}Let $\pi:\mathfrak{X}\to C$ be a surjective morphism defined over $k$ from a projective variety $\mathfrak{X}$ to a smooth projective curve $C$ such that the general fibers of $\pi$ are separably rationally connected. Let $g_0:C' \to  \mathfrak{X}$ be a multisection of $\mathfrak{X}\rightarrow C$.Suppose that the image of $g_0$ lies in the smooth locus of $\mathfrak{X}\rightarrow C$ and meets with very free curves on general fibers of $\pi$. Then for any $d\in\mathbb{Z}$, there exist $q\gg0$ rational curves $g_i: C_i \to \mathfrak{X}, 1 \leq i \leq q$, such that\begin{enumerate}\item For $ 1 \leq i \leq q$,  $g_i:C_i\rightarrow \mathfrak{X}$ factors through a separably rationally connected fiber  $\mathfrak{X}_{c_i}$ for some $c_i\in C$,
and $g_i:C_i\rightarrow \mathfrak{X}_{c_i}$ is very free in $\mathfrak{X}_{c_i}$. \item $\hat{C}=C'\cup C_1\cup...\cup C_q$ is a comb with $q$ teeth  endowed with a natural morphism $g: \hat{C} \to  \mathfrak{X}$.  Furthermore, there is a smoothing  of $g$ for some pointed curve $(T,0)$, denoted  by $\Sigma \to T, G: \Sigma \to \mathfrak{X}$.\item For general $t\in T$, $H^1(\Sigma_t, G_t^*{T_{\mathfrak{X}/C}}\otimes M)=0$ for any line bundle $M$ on $\Sigma_t$ of degree $d$, where $G_t: \Sigma_t \to \mathfrak{X}$ denotes the restriction of $G$ to the fiber $\Sigma_t$.\end{enumerate}
\end{thm}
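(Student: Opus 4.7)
The plan is to construct the desired comb by attaching very free rational teeth lying in individual SRC fibers of $\pi$, establish a cohomological vanishing on the resulting reducible curve, and then invoke the standard comb--smoothing technology (as in \cite{ghs} and \cite[Ch II, Section 7]{Kollar2}) together with upper semicontinuity of cohomology in flat families to obtain both the smoothing $\Sigma\to T$ and the $H^1$ vanishing on a general smooth fiber.

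First, I would choose $q$ general points $p_1,\ldots,p_q\in C'$ whose images $c_i=\pi(g_0(p_i))$ are distinct and lie in the open locus of $C$ over which $\pi$ has smooth SRC fibers. By the hypothesis that $g_0(C')$ meets a very free rational curve on each general fiber and by Theorem~\ref{thm:SRC_fundamental}, for each $i$ I pick a very free rational curve $g_i:C_i=\mathbb{P}^1\to\mathfrak{X}_{c_i}$ passing through $g_0(p_i)$. By the standard trick of attaching additional very free curves and deforming through the marked point, I may further assume that every splitting summand of $g_i^*T_{\mathfrak{X}/C}$ on $\mathbb{P}^1$ has degree at least any prescribed integer $N$. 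Gluing $C_i$ to $C'$ at $p_i$ produces the comb $\hat C=C'\cup C_1\cup\cdots\cup C_q$ and a morphism $g:\hat C\to\mathfrak{X}$ extending $g_0$ and the $g_i$.

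Next I would prove that, for $N$ and $q$ sufficiently large, one has $H^1(\hat C,\,g^*T_{\mathfrak{X}/C}\otimes L)=0$ uniformly over all line bundles $L\in\operatorname{Pic}^d(\hat C)$. The normalization sequence expresses this group in terms of the $H^1$'s of $g_0^*T_{\mathfrak{X}/C}\otimes L|_{C'}$ and of each $g_i^*T_{\mathfrak{X}/C}\otimes L|_{C_i}$ modulo node conditions; the $C_i$-contributions vanish as soon as $N$ dominates $d$, and sufficiently many teeth at sufficiently general attaching points force the evaluation maps at the nodes to kill the remaining handle cohomology in the associated long exact sequence. Boundedness of $\operatorname{Pic}^d(\hat C)$ then allows a single choice $(N,q)$ to serve for all $L$ at once. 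The case $L=\mathcal{O}$ of this vanishing is the obstruction to deforming $g$ as a map into $\mathfrak{X}$, while the node-smoothing obstructions are controlled by the same positivity; hence the standard smoothability criterion yields a family $G:\Sigma\to\mathfrak{X}$ over a pointed smooth curve $(T,0)$ with $\Sigma_0\cong\hat C$, $G|_{\Sigma_0}=g$, and $\Sigma_t$ smooth for general $t$. Assertion (3) then follows by upper semicontinuity of cohomology applied to $G^*T_{\mathfrak{X}/C}\otimes\mathcal{M}$ over $\Sigma\times_T\operatorname{Pic}^d(\Sigma/T)$, using a Poincar\'e line bundle $\mathcal{M}$.

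The main obstacle I expect is ensuring \emph{uniform} vanishing over the full $\operatorname{Pic}^d$ family rather than for a single $L$. This demands coordinating two choices: take $N$ large enough to absorb $d$ plus $g(C')$-type corrections on every tooth, and simultaneously include enough general attaching nodes that the evaluation at the nodes always surjects onto the cokernel of the handle cohomology. A secondary technical point is arranging that each $g_i$ genuinely passes through the prescribed point $g_0(p_i)$ while retaining the positivity $N$; the ample supply of deformations of a free curve through a general point is the essential tool for this bookkeeping.
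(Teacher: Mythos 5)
The paper does not actually prove Theorem~\ref{rel}: it states the result as ``already implicitly stated'' in \cite{ghs} and \cite{HT}, and only remarks that a shorter proof could be extracted from \cite[Proposition 2.4]{TZ}. Your sketch reconstructs precisely the standard comb--smoothing argument of those references (attach very free teeth in smooth SRC fibers at general points of the multisection, kill the handle cohomology through the nodes, smooth, and propagate the vanishing by semicontinuity), so in approach you are doing exactly what the paper's citation points to.

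Two steps as written need repair. First, your appeal to ``boundedness of $\operatorname{Pic}^d(\hat C)$'' is false: for the reducible nodal curve $\hat C$, $\operatorname{Pic}^d$ is a disjoint union over all multidegrees summing to $d$ and is not of finite type, so you cannot quantify over all of it at once. The standard fix is to work only with the sheaves that actually arise as flat limits of degree-$d$ line bundles on nearby smooth fibers (these have bounded multidegree, or are torsion-free of rank one on a compactified Jacobian), or to prove the stronger fiberwise statement that every line-bundle quotient of $G_t^*T_{\mathfrak{X}/C}$ has degree greater than $2g(C')-2-d$, which by Serre duality gives the vanishing for all $M$ of degree $d$ simultaneously. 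Second, the smoothing obstruction for $g:\hat C\to\mathfrak{X}$ is not $H^1(\hat C, g^*T_{\mathfrak{X}/C})$ but is governed by $g^*T_{\mathfrak{X}}$ (or the normal sheaf $N_g$, as in \cite[Lemma 25]{HT}); since the handle is a multisection, $(\pi\circ g_0)^*T_C$ can have negative degree when $g(C)\geq 2$, so you cannot silently pass from the relative to the absolute tangent bundle on $C'$. The GHS/HT argument avoids this by smoothing via the normal sheaf twisted down at the attachment points, with the teeth absorbing the negativity through an elementary transformation at the nodes; your phrase about ``evaluation maps at the nodes'' gestures at this but the bundle you are evaluating must be the right one. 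With these two corrections the argument goes through and is the intended one.
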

\begin{rem} It is possible to get a potentially shorter proof of this \emph{relative smoothing} theorem by almost the same argument as in the proof of \cite[Proposition 2.4]{TZ}.  Furthermore, if we apply this theorem to the case of a trivial product $C \times X$ as a family over $C$, where $C$ is a smooth curve inside $X$, and take the multisection to be the diagonal isomorphic to $C$, then \cite[Proposition 2.4]{TZ} will follow as a corollary of this theorem.  Note \cite[Proposition 2.4]{TZ} was useful for several important questions concerning topology and arithmetic of separably rationally connected varieties in the past.   We encourage interested readers to find more applications of arguments using the trivial product in similar ways.
\end{rem}



The following proposition uses Theorem \ref{thm:SRC_fundamental} and Theorem \ref{rel} (or equivalently and referring to more details-the results of \cite{HT}).  We note here that this proposition is based on discussions of Zsolt Patakfalvi with Zhiyuan Li and Professor Brendan Hassett. We also note that the key difficulty is that one cannot assume resolution of singularities for dimension higher than $3$ -- we only assumed the total space is smooth.

\begin{prop}\label{dim1-src}
Let $f:X \to Y$ be a surjective morphism such that  $X$ and $Y$ are smooth projective varieties over $k$ such that $Y$ is SRC and also that the geometric generic fiber of $f$ is smooth, irreducible and SRC.  Then $X$ is SRC.
\end{prop}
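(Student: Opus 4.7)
The plan is to produce a very free rational curve in $X$ and apply Theorem~\ref{thm:SRC_fundamental}. I would begin by choosing a very free rational curve $\iota: C_0 \cong \mathbb{P}^1_k \to Y$; this exists because $Y$ is SRC. Since the geometric generic fiber of $f$ is smooth and SRC, the locus in $Y$ over which $f$ has smooth, SRC fibers is open, so by taking $\iota$ general enough we may assume the pulled-back family $\pi: \mathfrak{X} := X \times_Y C_0 \to C_0$ has smooth, SRC geometric general fibers, and that $\mathfrak{X}$ is smooth along them.

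Next I would invoke the positive-characteristic Graber--Harris--Starr theorem (Theorem~\ref{ghs}) applied to $\pi$ to obtain a section $s: C_0 \to \mathfrak{X}$, and by further general-position arguments arrange that $s(C_0)$ lies in the smooth locus of $\pi$ and meets very free curves contained in general smooth SRC fibers. Then Theorem~\ref{rel} applies with handle $s(C_0)$ and parameter $d = -2$: it yields $q \gg 0$ very free tooth-curves $C_1, \dots, C_q$ in smooth SRC fibers, a comb $\hat{C} = s(C_0) \cup C_1 \cup \cdots \cup C_q$, and a smoothing $G: \Sigma \to \mathfrak{X}$ over a pointed curve $(T,0)$ such that for general $t \in T$ we have $\Sigma_t \cong \mathbb{P}^1$ and
\[
H^1\bigl(\Sigma_t,\, G_t^*\, T_{\mathfrak{X}/C_0}(-2)\bigr) = 0.
\]
On $\mathbb{P}^1$ this vanishing is equivalent to $G_t^*\, T_{\mathfrak{X}/C_0}$ being ample.

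The final step is a relative tangent sequence argument. Let $G_t^X: \Sigma_t \to X$ be the composition of $G_t$ with the projection $\mathfrak{X} \to X$. Since $\mathfrak{X}$ is the base change of $f$ along $\iota$, one has $T_{\mathfrak{X}/C_0} = \mathrm{pr}_X^*\, T_{X/Y}$ on the smooth locus of $\pi$ (which contains the image of $G_t$), so $(G_t^X)^* T_{X/Y} = G_t^*\, T_{\mathfrak{X}/C_0}$ is ample on $\Sigma_t$. Pulling back the relative tangent sequence $0 \to T_{X/Y} \to T_X \to f^* T_Y \to 0$ along $G_t^X$ yields
\[
0 \to (G_t^X)^* T_{X/Y} \to (G_t^X)^* T_X \to (f \circ G_t^X)^* T_Y \to 0.
\]
The composition $f \circ G_t^X$ factors as $\Sigma_t \xrightarrow{h_t} C_0 \xrightarrow{\iota} Y$, and $h_t$ has degree $1$: the handle contributes degree $1$ over $C_0$ while each tooth lies in a single fiber of $\pi$ (degree $0$), and the total degree over $C_0$ is locally constant in the flat family $\Sigma \to T$. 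Since $\Sigma_t$ and $C_0$ are both $\mathbb{P}^1$, the degree-one map $h_t$ is an isomorphism. Then very-freeness of $\iota$ gives $\iota^* T_Y$ ample on $C_0$, so $(f \circ G_t^X)^* T_Y = h_t^* \iota^* T_Y$ is ample as well. Thus $(G_t^X)^* T_X$ is an extension of two ample vector bundles on $\mathbb{P}^1$; any quotient line bundle is a quotient of either the kernel or the cokernel, hence has positive degree, so $(G_t^X)^* T_X$ is itself ample. Hence $G_t^X : \mathbb{P}^1 \to X$ is very free, and $X$ is SRC by Theorem~\ref{thm:SRC_fundamental}.

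The main difficulty I anticipate is the genericity step: making sure that the section produced by Graber--Harris--Starr can actually be placed in the smooth locus of $\pi$ and be made to meet very free curves inside smooth SRC fibers, so that the hypotheses of Theorem~\ref{rel} are genuinely available. This should follow from openness of the smooth-SRC locus on fibers, combined with the standard deformation flexibility of sections in the presence of very free vertical curves; but it requires care because $\mathfrak{X}$ itself need not be smooth globally, and we have no resolution of singularities available in this dimension.
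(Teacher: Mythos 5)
Your overall strategy --- pull back to $\mathfrak{X}=X\times_Y C_0$ over a very free curve $C_0\subseteq Y$, get a section by de Jong--Starr, feed it into Theorem~\ref{rel} with $d=-2$, and then transfer ampleness from $T_{\mathfrak{X}/C_0}$ to $T_X$ via the relative tangent sequence --- is the route the paper alludes to when it says the proposition ``uses Theorem~\ref{rel}'', and your endgame (the degree-one map $h_t:\Sigma_t\to C_0$ is an isomorphism; an extension of ample bundles on $\mathbb{P}^1$ is ample) is fine. But the step you yourself flag as ``the main difficulty'' is a genuine gap, and it is precisely the point the paper's written proof is engineered to avoid. Theorem~\ref{rel} requires the multisection to lie in the smooth locus of $\pi:\mathfrak{X}\to C_0$. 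The section produced by Theorem~\ref{ghs} meets \emph{every} fiber of $\pi$, including the finitely many fibers over points of $C_0$ where $f$ fails to be smooth along $\iota(C_0)$; over such a point the section may pass through a point where $\mathfrak{X}$ is singular or $\pi$ is not smooth, and there is no mechanism to move it off: deforming the section requires exactly the positivity one is trying to establish, and one cannot pass to a resolution of $\mathfrak{X}$ since resolution of singularities is unavailable in this dimension (the paper states this obstruction explicitly). The same problem re-enters in your final paragraph: a general deformation $\Sigma_t$ still has degree one over $C_0$, hence still meets every fiber, so the identification $T_{\mathfrak{X}/C_0}\cong \mathrm{pr}_X^*T_{X/Y}$ and the exactness of the pulled-back sequence $0\to T_{X/Y}\to T_X\to f^*T_Y\to 0$ are only available over the smooth locus of $f$, which the curve necessarily exits.

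The paper's actual proof sidesteps all of this by never doing deformation theory inside $\mathfrak{X}$. After obtaining the section $\sigma:\mathbb{P}^1\to X$ (via the component $Z$ of $X\times_Y\mathbb{P}^1$ dominating $\mathbb{P}^1$, taken with reduced structure), the comb is built inside the smooth ambient variety $X$: the teeth are very free curves in smooth fibers of $f$ attached at general points of the handle (an open dense condition on the handle, so achievable), the normal bundle $N_g=\bigl(\ker(g^*\Omega_X\to\Omega_C)\bigr)^{\vee}$ is taken relative to $X$, and the relative structure of $f$ enters only through the generically exact sequence $g^*f^*\Omega_Y|_{C_0}\to g^*\Omega_X|_{C_0}\to g^*\Omega_{X/Y}|_{C_0}$ together with pointwise computations at points lying in smooth SRC fibers. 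The real content --- killing $H^0(C_0,\mathcal{E}|_{C_0})$ by attaching sufficiently many teeth --- is what substitutes for the unverifiable hypothesis of Theorem~\ref{rel} in your version. To repair your argument you must either prove that the section can be chosen inside the smooth locus of $\pi$, or relocate the comb construction and smoothing into $X$ as the paper does.
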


\begin{proof}
  Choose a general unramified very free curve $\mathbb{P}^1 \to Y$ and define $Z$ to be the component of $X \times_Y \mathbb{P}^1$ dominating $\mathbb{P}^1$ via the projection to the second factor giving $Z$ the reduced induced scheme structure. Let $h : Z \to \mathbb{P}^1$ be the induced map. Since the general fiber of $X \times_Y \mathbb{P}^1 \to \mathbb{P}^1$ is smooth and irreducible, $Z$ is integral.  According to  Theorem~\ref{ghs}, there is  a section $\mathbb{P}^1 \to Z$ of $h$. This defines a rational curve $\sigma : \mathbb{P}^1 \to X$.   Choose a point $r$ on $\mathbb{P}^1$ such that $\sigma(r)$ lies both in the smooth locus of $\sigma(\mathbb{P}^1)$ and in a smooth fiber of $f$.  From here we follow the argument of \cite[Lemma 25]{HT} to show that

\emph{Claim:} there exists a comb $C$ in $X$ the handle of which is the aforementioned $\mathbb{P}^1$ (the domain of $\sigma$) and which is endowed with a map $g: C \to X$, such that
\begin{itemize}
\item $g$ is an immersion,
\item the image of $g$ is nodal,
\item  $g(r)$ is contained in a smooth point of the image of $g$ and
\item by  denoting the normal bundle of $g$ by $N_g$, the restriction of $N_g (-r)$ to every irreducible component of $C$ is globally generated and without higher cohomology ($N_g$ is defined to be the dual of $\ker (g^* \Omega_X \to \Omega_C)$, it is a vector bundle according to  \cite[page 182]{HT}).
\end{itemize}
The argument to prove the above claim is as follows:

\emph{Initial considerations:} Since we chose our initial very free curve in $Y$ generally, the general fiber of $h$ is smooth, irreducible and SRC. In particular, we may choose  $m$ very free curves in general fibers of $h$ meeting $C_0 := \sigma(\mathbb{P}^1)$ transversely. This way we obtain a comb $C =  \bigcup_{i=0}^m C_i$, where all the $C_i$ are smooth rational curves meeting  $C_0$ in nodal singularities and $C_1,\dots, C_m$ do not intersect each others, and we further obtain an immersion $g : C \to X$ with  nodal image such that $r$ is contained in a smooth point of the image of $g$. We claim then that $N_g (-r)|_{C_i}$ is globally generated and without higher cohomology for every $i$ if we choose $m$ to be big enough. Since each $C_i$ is a smooth rational curve, this is equivalent to showing that there is a big enough $m$ such that for all $C_i$, the vector bundle $N_g (-r)|_{C_i}$ is nef . There are two possibilities:

\emph{The case when $C_i$ is a tooth:} Since $r$ does not lie in a tooth, here we have $N_g(-r)|_{C_i} \cong N_g|_{C_i}$. Further, since $C_i$ lies in one of the smooth fibers (of both $f$ and $h$), there is an exact sequence of vector bundles as follows.
\begin{equation*}
\xymatrix{
0 \ar[r] & g^* T_{X/Y}|_{C_i} \ar[r] & g^* T_X |_{C_i}  \ar[r] & g^* f^* T_Y |_{C_i} \ar[r] & 0
}
\end{equation*}
Since $g(f(C_i))= \mathrm{pt}$, the last term is a trivial bundle, hence it is nef. Further the first term is ample (and hence nef) since $C_i$ was chosen to be very free in its fiber. Therefore, $g^* T_X |_{C_i}$ is nef. Since $N_g|_{C_i}$ contains a full rank subbundle which is a quotient of $g^* T_X |_{C_i}$, $N_g|_{C_i}$ is nef as well. This concludes the proof of this case.

\emph{The case when $C_i$ is the handle, i.e. $C_i=C_0$.} Here the situation is much more intricate. Note that $N_g$ is defined as the dual of  $\mathcal{E} := \ker \left( g^* \Omega_X \to \Omega_C \right)$ (since  $g$ is an immersion this homomorphism is surjective). In particular, to prove our goal that $N_g(-r)|_{C_0}$ is nef it is enough to prove that $\mathcal{E}|_{C_0}$ is anti-ample. Note that since $C_0 \cong \mathbb{P}^1$, $\mathcal{E}|_{C_0} \cong \bigoplus_{j=1}^n \mathcal{O}_{\mathbb{P}^1}(a_j)$ (where $n = \dim X$) and hence showing that $\mathcal{E}|_{C_0}$ is anti-ample is equivalent to showing that $a_i <0$, which is further equivalent to showing that $H^0(C_0, \mathcal{E}|_{C_0})=0$. So, assume the contrary and fix any $0 \neq s \in H^0(C_0, \mathcal{E}|_{C_0})=0$.  Consider then the following diagram, where the rows and columns are exact.
\begin{equation*}
\xymatrix{
 & & 0 \ar[d] \\
& & g^* f^* \Omega_Y|_{C_0} \ar[d] \\
0 \ar[r] & \mathcal{E}|_{C_0} \ar[r] & g^* \Omega_X|_{C_0} \ar[r] \ar[d] & \Omega_C|_{C_0} \ar[r]  & 0 \\
& & g^* \Omega_{X/Y}|_{C_0} \ar[d] \\
& & 0
}
\end{equation*}
Here one has to argue that $g^* f^* \Omega_Y|_{C_0}  \to g^* \Omega_X|_{C_0}$ and $\mathcal{E}|_{C_0} \to g^* \Omega_X|_{C_0}$ are injective. The reason in  both cases is that the source sheaf is locally free and the map is injective generically.

Since by construction $C_0$ maps onto a very free curve of $Y$, $g^* f^* \Omega_Y|_{C_0}$ is anti-ample and in particular, $H^0(C_0,g^* f^* \Omega_Y|_{C_0})=0$. Hence by the above commutative diagram, $H^0(C_0, \mathcal{E}|_{C_0})$ injects into $H^0(C_0,g_* \Omega_{X/Y}|_{C_0})$.  Therefore, we may fix a smooth closed point $P$ of $C_0$ such that $P$ is contained in a smooth, irreducible SRC fiber of $f$ and such that the image of $s$ in $\Omega_{X/Y} \otimes k(P)$ is not zero. Since the above diagram identifies,  $\mathcal{E}|_{C_0}$ with $\frac{\mathcal{I}_{X/C,P}}{\mathcal{I}_{X/C,P}^2} \subseteq   \left(g^* \Omega_X|_{C_0}\right)_P$, we may find a local function $x_2 \in \mathcal{I}_{X/C,P}$, such that $dx_2$ identifies with $s_p$. Note that although the choice of $x_2$ is not unique, different choices yield the same $dx_2$. So, fix $x_2$. Note also that $x_2$ gives local coordinate also on the fiber $f^{-1} (f(p))$ of $P$ (this follows from the fact that the image of $s_p$ is not zero in $g^* \Omega_{X/
Y}|_{C_0}  \otimes k(P)$). Further we can extend this to local coordinates $x_1, \dots, x_n$  of $X$ at $P$  satisfying the following two conditions:
\begin{enumerate}
\item $x_1, x_3, x_4, \dots, x_r$ are coordinates of $f(P)$ and
 \item $x_2, \dots, x_n$ generate the ideal of $C_0$ at $P$.
\end{enumerate}
According to \cite[Thm 2.42]{Debarre}, we may choose a very  free curve $\tilde{C}$ of the fiber $X_{f(P)}$ of $P$ such that  $\tilde{C}$  is tangent to $x_1=x_3= \dots = x_n=0$ at $P$.  Changing $x_1,x_3, \dots, x_n$ by elements of $m_{X,P}^2$ we may assume that $x_1=x_3= \dots = x_n=0$ is actually a local equation for $\tilde{C}$.  Let us attach then $\tilde{C}$ to $C$ and thus obtain a new comb $C'$ with a map $g' : C' \to X$. In particular then $I_{C'/X,P}=(x_1 \cdot x_2, x_3, \dots, x_n)$. A local computation shows then that $\ker \left( g^* \Omega_X|_{C_0} \to \Omega_C|_{C_0}  \right)$ at $P$  is the free $\mathcal{O}_{C_0,P}$-module  generated by $dx_2,\dots, dx_n$, whereas $\ker \left( g^* \Omega_X|_{C_0} \to \Omega_{C'}|_{C_0}  \right)$ at $P$  is the free $\mathcal{O}_{C_0,P}$-module  generated by $tdx_2, dx_3, \dots, dx_n$, here $t$ is the local uniformizer of $C_0$ at $P$. So, $s$ is not a section of the subsheaf  $\ker \left( g^* \Omega_X|_{C_0} \to \Omega_{C'}|_{C_0}  \right)$ of $\mathcal{E}|_{C_0}$.
Hence the space of sections became one dimension smaller by attaching $\tilde{C}$. In particular, for $m \gg 0$ it is zero dimensional, which is a contradiction. This finishes the proof of the second case and of the claim as well.

\emph{Applying the claim:} According to  \cite[Proposition 24]{HT} our claim implies that there exists a  smoothing $h_t : \Sigma_t \to X$ of $h_0=g$ going through $g(r)$ such that $h_t$ is an immersion for every $t \in T$. That is, there is a section $\lambda : T \to \Sigma$ of $\Sigma \to T$, such that $h_t( \lambda(t)) = r$ for every $t \in T$ (note that a priori in positive characteristic there might only be a closed subvariety $S:=h^{-1}(g(r))$ mapping  finitely (and inserparbly) onto T, however we can replace $\Sigma \to T$ by $\Sigma \times_T S \to S$, which has the same fibers but it has  also a section mapping onto $g(r)$). By restricting $T$ we may assume that the image of this section lies in the relative smooth locus of $\Sigma \to T$.

Note that since $N_g(-r)$ is nef, $N_g(-2r)$ also has no higher cohomology. Indeed, consider the long exact sequence of cohomology associated to the short exact sequence
\begin{equation*}
\xymatrix{
0 \ar[r] & N_g(-2r) \ar[r] & N_g(-r) \ar[r] & N_g(-r)|_r \ar[r] & 0
}
\end{equation*}
Since $N_g(-r)$ is globally generated, $H^0(C,N_g(-r)) \to H^0(r,N_g(-r)|_r)$ is surjective, and hence $H^1(C,N_g(-2r))=0$ holds. It follows then by a standard argument that $H^1(\Sigma_t, N_{h_t}(-2 \lambda(t)))=0$ for general $t \in T$ (the basic idea is that the flat limit of $N_{h_t}^*$ at $t=0$ is the extension of $N_g^*$ by a sheaf with zero dimensional support). However then since $\Sigma_t \cong \mathbb{P}^1$, this implies that $N_{h_t}$ is ample. Thus by the following exact sequence, so is $h_t^* T_X$.
\begin{equation*}
\xymatrix{
0 \ar[r] &  T_{\Sigma_t} \cong T_{\mathbb{P}^1} \cong \mathcal{O}_{\mathbb{P}^1}(2) \ar[r] & h_t^* T_X \ar[r] &  N_f|_{\Sigma_t} \ar[r] \ar[r] & 0
}
\end{equation*}
Hence $h_t : \Sigma_t  \to X$ is a very free curve and then by Theorem \ref{thm:SRC_fundamental}  $X$ is SRC.

\end{proof}

\noindent{\it Proof of Theorem \ref{dim1-section-2}.} Similar to the proof of Theorem \ref{0gfr-rcc},  here we show the separable rational connectedness case by case:
\begin{itemize}
\item If $\dim Y=1$, then $Y\simeq \mathbb{P}_k^1$ since $Y$ is globally $F$-regular by Lemma~\ref{glFness for MMP}. Moreover, by Theorem~\ref{general fiber}, we know that
general fibers of $f$ are globally $F$-regular and hence normal and rational.
Since terminal singularities are isolated,
we can apply Theorem~\ref{Hirokado} to obtain that
general fibers are smooth. Now let us take a resolution $X' \to X$ that is an isomorphism outside of the singular locus of $X$, and let $f' : X' \to Y$ be the induced morphism. Since terminal threefold singularities are isolated by Proposition \ref{isolated}, $X' \to X$ is isomorphism over the generic point of $Y$, and hence the geometric generic fibers of $f$ and $f'$ agree. However, by our assumptions the geometric general fiber of $f$ has ample anti-canonical class, and we have seen that it is also smooth. Therefore, it is a rational surface and therefore it is SRC. In particular, we may apply Proposition \ref{dim1-src} to $f'$. This shows that $X'$ is SRC and then so is $X$, since it is birational to $X'$.

\item If $\dim Y=2$, then $Y$ is a globally $F$-regular surface, which  has to be rational and then consequently SRC. Further, by Theorem~\ref{general fiber}, the geometric generic fiber of $f$ is globally $F$-regular, and hence it is a projective line, which is also SRC. Proposition \ref{dim1-src} then shows that $X$ is SRC in this case as well.


\end{itemize}
\qed
\begin{rem}
Since the Mori fiber space here is of dimension $3$, so resolution for surfaces and threefold in positive characteristics can applied to get an easier proof. Nonetheless, the readers are encouraged to find the full implication of Proposition \ref{dim1-src} for more general and higher dimensional questions along this direction.
\end{rem}

\section{RC-ness of varieties of globally $F$-regular type}\label{section-gfr-type}

In this section we show that a globally $F$-regular type variety is rationally connected.

We briefly explain how to reduce things from characteristic zero to characteristic $p > 0$.
The reader is referred to \cite[Chapter 2]{HH} and \cite[Section 3.2]{MS} for additional details.

Let $X$ be a normal variety over an algebraically closed field $k$ of characteristic zero and $D=\sum_i d_i D_i$ a $\mathbb{Q}$-divisor on $X$.
Choosing a suitable finitely generated $\mathbb{Z}$-subalgebra $A$ of $k$,
we can construct a scheme $X_A$ of finite type over $A$ and closed subschemes $D_{i, A} \subsetneq X_A$ such that
there exists isomorphisms
\[\xymatrix{
X \ar[r]^{\cong \hspace*{3em}} &  X_A \times_{\Spec \, A} \Spec \, k\\
D_i \ar[r]^{\cong \hspace*{3em}} \ar@{^{(}->}[u] & D_{i, A} \times_{\Spec \, A} \Spec \, k. \ar@{^{(}->}[u]\\
 }\]
Note that we can enlarge $A$ by localization, in particular inverting a single nonzero element, and replacing $X_A$ and $D_{i,A}$ with the corresponding open subschemes.
Thus, applying the generic freeness \cite[(2.1.4)]{HH}, we may assume that $X_A$ and $D_{i, A}$ are flat over $\Spec \, A$.
 Enlarging $A$ further if necessary, we may also assume that $X_A$ is normal and $D_{i, A}$ is a prime divisor on $X_A$.
Letting $D_A:=\sum_i d_i D_{i,A}$, we refer to $(X_A, D_A)$ as a \textit{model} of $(X, D)$ over $A$.

Given a closed point $\mu \in \Spec \, A$, we denote by $X_{\mu}$ (resp., $D_{i, \mu}$) the fiber of $X_A$ (resp., $D_{i, A}$) over $\mu$.
Then $X_{\mu}$ is a scheme of finite type over the residue field $k(\mu)$ of $\mu$, which is a finite field.
Enlarging $A$ if necessary, we may assume that  $X_{\mu}$ is a normal variety over $k(\mu)$, $D_{i, \mu}$ is a prime divisor on $X_{\mu}$ and consequently $D_{\mu}:=\sum_i d_i D_{i, \mu}$ is a $\mathbb{Q}$-divisor on $X_{\mu}$ for all closed points $\mu \in \Spec \, A$.


Given a morphism $f:X \to Y$ of varieties over $k$ and models $(X_A, Y_A)$ of $(X, Y)$ over $A$,  after possibly enlarging $A$, we may assume that $f$ is induced by a morphism $f_A :X_A \to Y_A$ of schemes of finite type over $A$.
Given a closed point $\mu \in \Spec \, A$, we obtain a corresponding morphism $f_{\mu}:X_{\mu} \to Y_{\mu}$ of schemes of finite type over $k(\mu)$.
If $f$ is projective (resp. finite), after possibly enlarging $A$, we may assume that $f_{\mu}$ is projective (resp. finite) for all closed points $\mu \in \Spec \, A$.

\begin{dfn}\label{type}
Use the notation as before.
\begin{enumerate}[(i)]
\item A projective variety (resp. an affine variety) $X$ is said to be of \textit{globally $F$-regular type} (resp. \textit{strongly $F$-regular type}) if for a model of $X$ over a finitely generated $\mathbb{Z}$-subalgebra $A$ of $k$, there exists a (Zariski) dense open subset $S \subseteq \mathrm{Spec} \, A$ of closed points such that $X_{\mu}$ is globally $F$-regular (resp. strongly $F$-regular) for all $\mu \in S$.
\item A projective variety (resp. an affine variety) $X$ is said to be of \textit{dense globally $F$-split type} (resp. \textit{dense $F$-pure type}) if for a model of $X$ over a finitely generated $\mathbb{Z}$-subalgebra $A$ of $k$, there exists a (Zariski) dense subset $S \subseteq \Spec \, A$ of closed points such that $X_{\mu}$ is globally $F$-split (resp. $F$-pure) for all $\mu \in S$.
\end{enumerate}
\end{dfn}

\begin{rem}
(1) The above definition is independent of the choice of a model.

(2) If $X$ is of globally $F$-regular type (resp. strongly $F$-regular type), then we can take a model $X_A$ of $X$ over some $A$ such that $X_{\mu}$ is globally $F$-regular (resp. strongly $F$-regular) for all closed points $\mu \in \Spec \, A$.
\end{rem}

The following proposition is well-known \cite{hw,SS}, we recall it for the convenience of the reader.:

\begin{prop}\label{just singularities}
Let $X$ be a normal projective variety over an algebraically field of characteristic zero.
\begin{enumerate}[$(1)$]
\item If $X$ is $\mathbb{Q}$-Gorenstein and of globally $F$-regular type $($resp. dense globally $F$-split type$)$, then it has only Kawamata log terminal singularities $($resp. log canonical singularities$)$, \cite[Theorem 3.9]{hw}.
\item If $X$ has some effective $\mathbb{Q}$-divisor $\Delta$ such that $(X,\Delta)$ is Kawamata log  terminal and $-(K_X+\Delta)$ is ample, then $X$ is of globally $F$-regular type, \cite[Theorem 5.1]{SS}.
\end{enumerate}
\end{prop}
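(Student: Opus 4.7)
The plan is to handle the two parts separately, since they address converse directions and require different techniques.

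For part (1), the strategy is a standard spread-out argument combined with the local theorem of Hara--Watanabe. In characteristic zero, fix a log resolution $\pi : Y \to X$ (available by Hironaka) and write $K_Y = \pi^* K_X + \sum a_i E_i$. Spread $X$, $Y$, $\pi$, and the $E_i$ to models over a finitely generated $\mathbb{Z}$-subalgebra $A \subseteq k$; after enlarging $A$, one may assume $\pi_A : Y_A \to X_A$ restricts on each closed fiber to a log resolution with the \emph{same} discrepancies $a_i$. For $\mu$ in the dense set $S$ of Definition \ref{type}(i), global $F$-regularity of $X_\mu$ implies that every local ring is strongly $F$-regular, so by the local Hara--Watanabe theorem each $a_i > -1$, i.e.\ $X_\mu$ is KLT. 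Since the discrepancies agree with those in characteristic zero, $X$ is KLT. The log canonical / dense globally $F$-split case is analogous, using that $F$-pure implies log canonical.

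For part (2), I would follow the Schwede--Smith strategy. Start with $(X, \Delta)$ KLT and $-(K_X + \Delta)$ ample, and spread to a model $(X_A, \Delta_A)$ over $A$; after possibly enlarging $A$ each specialization $(X_\mu, \Delta_\mu)$ is KLT with $-(K_{X_\mu} + \Delta_\mu)$ ample. The converse direction of Hara--Watanabe gives that the local rings of $X_\mu$ are strongly $F$-regular, supplying abundant local splittings. The central task is then to promote these into a \emph{single global} splitting of $\cO_{X_\mu} \to F^e_* \cO_{X_\mu}(\lceil (p^e-1)\Delta_\mu\rceil + E)$ for any prescribed effective Cartier divisor $E$. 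By trace duality, such global splittings are identified with global sections of a suitable reflexive twist of $\omega_{X_\mu}^{1-p^e}$; using the ampleness of $-(K_{X_\mu}+\Delta_\mu)$ together with Serre vanishing, one can for $e \gg 0$ produce such a section whose restriction to a generic point is nowhere vanishing, which translates to a splitting at the generic point. A standard localization argument combined with local strong $F$-regularity then upgrades this to a splitting of the whole map.

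The main obstacle is in part (2): the delicate step is the local-to-global promotion. Locally KLT already guarantees strong $F$-regularity, so individual points are never a problem; the difficulty is producing a single section that controls the splitting simultaneously at all points of $X_\mu$. Ampleness of $-(K_X+\Delta)$ is precisely what enables this, but one must track compatibility in characteristic $p$ throughout, and the bounds must be uniform over a dense subset of $\Spec A$, which requires constructibility arguments on the base $A$.
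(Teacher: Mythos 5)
The paper does not actually prove this proposition: it is explicitly recalled as well-known, with part (1) attributed to \cite[Theorem 3.9]{hw} and part (2) to \cite[Theorem 5.1]{SS}. So the relevant comparison is with those references. Your treatment of part (1) is essentially the standard argument behind the Hara--Watanabe citation and is fine: discrepancies computed on a spread-out log resolution are constant on a dense open subset of $\Spec A$, global $F$-regularity (resp.\ $F$-splitting) of $X_\mu$ implies local strong $F$-regularity (resp.\ $F$-purity), and the local Hara--Watanabe theorem then bounds the discrepancies.

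Part (2), however, has a genuine gap at exactly the step you flag as delicate. The assertion that ``a standard localization argument combined with local strong $F$-regularity upgrades'' a splitting at the generic point to a global splitting is false: a smooth projective curve of genus $\geq 2$ over $\overline{\mathbb{F}}_p$ is everywhere locally strongly $F$-regular, and $\cO_X \to F_*\cO_X$ always splits at the generic point (as a map of modules over the $F$-finite function field), yet such a curve is not even globally $F$-split. The real issue is not producing sections $\phi \in \Hom(F^e_*\cO_X(\lceil(p^e-1)\Delta\rceil+E),\cO_X)$ --- Serre vanishing gives plenty --- but showing that the evaluation map $\phi \mapsto \phi(1) \in H^0(X,\cO_X)=k$ is nonzero, which is an irreducibly global condition not detectable by localization. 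The proof in \cite[Theorem 5.1]{SS} resolves this by passing to the section ring $S$ of $X$ with respect to an ample divisor: anti-ampleness of $K_X+\Delta$ makes the corresponding pair on $\Spec S$ klt, Hara's theorem then makes the reduction of the cone strongly $F$-regular at the vertex for $p\gg 0$, and strong $F$-regularity of the section ring is equivalent to global $F$-regularity of $X_\mu$. Some such mechanism (the cone construction, or a genuinely global duality/vanishing argument identifying the evaluation map with a map whose surjectivity is forced) is needed to complete your sketch.
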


And we use the following easy lemma:

\begin{lem}\label{triviality}Let $X$ be a normal projective variety over algebraically closed field $k$ and $L$ line bundle on $X$. Then $L \equiv 0$ if $L$ and $L^{-1}$ are pseudo-effective.

\end{lem}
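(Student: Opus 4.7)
The starting point, valid in any characteristic, is the elementary observation that the joint pseudo-effectivity of $L$ and $L^{-1}$ forces $L\cdot H^{n-1}=0$ for every ample divisor $H$ on $X$, where $n=\dim X$. Indeed, since $L$ is a limit in $N^1(X)_{\mathbb{R}}$ of classes of effective $\mathbb{Q}$-divisors and each effective divisor has non-negative $H^{n-1}$-degree, we have $L\cdot H^{n-1}\ge 0$; the same argument applied to $L^{-1}$ yields the reverse inequality.

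To promote this numerical vanishing to $L\equiv 0$, the plan is to induct on $n=\dim X$, after first reducing to the case that $X$ is smooth (by resolution of singularities in characteristic zero, as in the paper's application, or by de Jong alterations in positive characteristic, using the projection formula to transfer numerical triviality back to $X$). The base case $n=1$ is immediate, since a degree-zero line bundle on a proper curve is numerically trivial. For $n=2$, the key input is the Zariski decomposition: writing the pseudo-effective $\mathbb{R}$-divisor $L$ as $L=P+N$ with $P$ nef and $N$ effective with negative-definite intersection form on its support, the identity $0=L\cdot H=P\cdot H+N\cdot H$ with $H$ ample and both summands non-negative forces $N\cdot H=0$ and hence $N=0$, so $L=P$ is nef. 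By symmetry $L^{-1}$ is also nef, so $L\cdot C\ge 0\ge L\cdot C$ for every curve $C$, whence $L\equiv 0$.

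For the inductive step $n\ge 3$, I would fix a very ample divisor $A$ on $X$ and pick a sufficiently general smooth hypersurface $H\in |mA|$ with $m\gg 0$; Bertini applies because the linear system is very ample. Restriction preserves pseudo-effectivity, since the restriction of a big class to a general member of a sufficiently positive linear system remains big, and pseudo-effectivity is the limit of bigness; hence both $L|_H$ and $(L|_H)^{-1}$ are pseudo-effective on $H$, and by the inductive hypothesis $L|_H\equiv 0$. The Grothendieck--Lefschetz hyperplane theorem for Picard groups (an isomorphism $\mathrm{NS}(X)_{\mathbb{Q}}\to \mathrm{NS}(H)_{\mathbb{Q}}$ in dimension $\ge 4$ and an injection in dimension $3$, under sufficient positivity of $H$) then lifts this vanishing to $L\equiv 0$ on $X$.

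The main obstacle is making the Lefschetz step rigorous in full generality, particularly in positive characteristic; however, the lemma is applied in the paper only in characteristic zero (in the final section on varieties of globally $F$-regular type), where the classical Lefschetz hyperplane theorem and Hironaka resolution apply directly, so this technical issue does not arise.
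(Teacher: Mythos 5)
Your opening observation, that $L\cdot H_1\cdots H_{n-1}=0$ for every choice of ample divisors $H_i$, is correct and is the natural starting point, but both devices you use to promote it to $L\equiv 0$ have genuine gaps. First, the restriction step: to conclude that $L|_H$ is pseudo-effective you need $H$ to avoid the supports of the effective parts of $L+\epsilon A$ for all (countably many rational) $\epsilon>0$ simultaneously, so ``general'' must be ``very general''; over a countable algebraically closed field such a member need not exist among closed points. This is not a pedantic point---it is precisely the difficulty on which the paper's own proof spends half its length, and resolving it requires knowing that pseudo-effectivity is preserved under base field extension, which the paper extracts from the nontrivial characterization in \cite{CMHS} (namely $H^0(X,mL+A)\neq 0$ for infinitely many $m$). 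Second, and more seriously, the Lefschetz step is asserted rather than proved, as you yourself concede. Grothendieck--Lefschetz concerns $\mathrm{Pic}(X)\to\mathrm{Pic}(H)$ (injective for $\dim X\ge 3$, bijective for $\dim X\ge 4$ in characteristic zero), whereas you need injectivity of $N^1(X)_{\mathbb Q}\to N^1(H)_{\mathbb Q}$, i.e.\ that $L|_H\equiv 0$ forces $L\equiv 0$; bridging the two requires in addition the Lefschetz theorem for $\mathrm{Pic}^0$ (equivalently for the Albanese), so that a bundle restricting to something numerically trivial on $H$ differs from an element of $\mathrm{Pic}^{\tau}(X)$ by a class killed by the injection of Picard groups. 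This can be assembled in characteristic zero, but as written it is a gap, and it is far heavier machinery than the statement warrants.

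The paper's argument is entirely different and disposes of both issues at once: if $L\cdot C>0$ for some curve $C$, choose very ample $D_1,\dots,D_{n-1}$ through $C$, so that $\bigcap_i D_i=C+R$ with $R$ a movable curve (uncountability of $k$ enters here); then $L\cdot\bigcap_i D_i\ge L\cdot C>0$ because $L$ is pseudo-effective and $R$ is movable, while $L^{-1}\cdot\bigcap_i D_i\ge 0$ because $\bigcap_i D_i$ is itself movable---a contradiction. Only the easy direction of the duality of \cite{bdpp} (pseudo-effective classes pair non-negatively with movable curves) is used, and the argument is characteristic-free, which matters since the lemma is stated over an arbitrary algebraically closed field. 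If you want to salvage your approach, note that your first step already gives $L\cdot D\cdot H^{n-2}=0$ for \emph{every} divisor class $D$, since ample classes span $N^1(X)_{\mathbb R}$; on a smooth model the Hodge-index non-degeneracy of the pairing $(D,D')\mapsto D\cdot D'\cdot H^{n-2}$ then yields $L\equiv 0$ directly, with no induction and no Lefschetz hyperplane theorem.
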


\begin{proof}First we show it when $k$ is uncountable.  Assume by contraction  that there exists a curve  $C$ with $C \cdot L \not =0$. By symmetry, we may assume that $(L \cdot C)>0$. We take a very ample line bundle $H$ on $X$ such that there exist $D_i \in |H|$ for $i=1, \cdots, \mathrm{dim}\,X-1 $ such  that $C \subseteq \bigcap_i D_i $ and $(L \cdot  \bigcap_i D_i) >0$. Note that since $k$ is uncountable, $\bigcap_i D_i$ and $\bigcap_i D_i-C$ is movable curve on $X$ in the sense of \cite[(vi) 1.3 Definition ]{bdpp}. Since $L^{-1}$ is also pseudo-effective, $(L^{-1} \cdot  \bigcap_i D_i)  \geq0$. This is a contradiction to that $(L^{-1} \cdot  \bigcap_i D_i)=-(L \cdot  \bigcap_i D_i)<0$. Next, when $k$ is countable, we take a filed extension $k \subset K$ such that $K$ is uncountable. Note that there are  a very ample divisor A and infinite many $m$ such that $H^0(X, mL+A)$ is not zero by \cite[Proposition 2.8 and Remark 2.9]{CMHS}. Thus after the base extension,  $H^0(X_K, mL_K+A_K)$ ($\simeq  H^0(X,
mL+A) \otimes_k K$) is also not zero, where $X_K, L_K$, and $A_K$ are the base change of $X, L$, and $A$ respectively. Note that $A_K$ is very ample. Then $L_K$ is pseudo-effective.  We apply this arguments for $L^{-1}$. Thus we see that $L_K$ is numerical trivial since $K$ is uncountable. Thus $L$ is also numerical trivial since  $L_K \cdot C_K=L \cdot C$ holds  for any curve $C$ on $X$.

\end{proof}

Next, we introduce the concept of a \emph{dlt blow-up}. The following theorem was originally proven by Hacon \cite[Theorem 3.1]{kk} and a simpler proof was given by Fujino \cite[Theorem 4.1]{fujino3}:

\begin{thm}[Dlt blow-up]\label{dltblowup}
Let $X$ be a normal quasi-projective variety and
$\Delta$ an effective $\mathbb Q$-divisor on $X$ such
that $K_X+\Delta$ is $\mathbb Q$-Cartier. Suppose that $(X,\Delta)$ is lc.
Then there exists a projective birational
morphism $\varphi:Y\to X$ from a normal quasi-projective
variety with the following properties:
\begin{itemize}
\item[(i)] $Y$ is $\mathbb Q$-factorial,
\item[(ii)] $a(E, X, \Delta)= -1$ for every
$\varphi$-exceptional prime divisor $E$ on $Y$, and
\item[(iii)] for $$
\Gamma=\varphi^{-1}_*\Delta+\sum _{E: {\text{$\varphi$-exceptional}}}E,
$$ it holds that  $(Y, \Gamma)$ is dlt and $K_Y+\Gamma=\varphi^*(K_X+\Delta)$.
\end{itemize}

\end{thm}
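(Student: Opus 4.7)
The plan is as follows. First I would take a log resolution $\varphi_0 : W \to X$ of $(X, \Delta)$ and introduce the boundary
$$\Gamma_W \;:=\; (\varphi_0)_*^{-1}\Delta \;+\; \sum_{i} E_i,$$
where the $E_i$ are the $\varphi_0$-exceptional prime divisors. Since $\varphi_0$ is a log resolution and the coefficients of $\Delta$ lie in $[0,1]$ (which is automatic from the log canonicity of $(X,\Delta)$), the pair $(W,\Gamma_W)$ is log smooth with boundary having coefficients in $[0,1]$, hence in particular it is dlt, and $W$ is $\mathbb{Q}$-factorial. A direct discrepancy computation gives
$$K_W + \Gamma_W \;=\; \varphi_0^{*}(K_X+\Delta) \;+\; F, \qquad F \;=\; \sum_{i}\bigl(a(E_i,X,\Delta)+1\bigr)\,E_i,$$
and the hypothesis that $(X,\Delta)$ is lc, i.e.\ that all $a(E_i,X,\Delta)\geq -1$, forces $F$ to be effective; by construction $F$ is $\varphi_0$-exceptional.

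Next I would run a $(K_W+\Gamma_W)$-MMP over $X$. Because we are in characteristic zero, termination is available; for instance one can reduce to BCHM by mildly perturbing the boundary (replace $\Gamma_W$ by $(1-\varepsilon)\Gamma_W+\varepsilon A$ for $A$ a general ample $\mathbb{Q}$-divisor on $W$ and small $\varepsilon>0$, which makes the pair klt with big log canonical divisor relative to $X$, then pass to the limit), or appeal to special termination of dlt flips. The outcome is a finite sequence of divisorial contractions and flips over $X$ terminating at a projective birational morphism $\varphi:Y\to X$ with $Y$ $\mathbb{Q}$-factorial, $(Y,\Gamma_Y)$ dlt (where $\Gamma_Y$ is the strict transform of $\Gamma_W$), and $K_Y+\Gamma_Y$ being $\varphi$-nef.

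Finally, let $F_Y$ be the push-forward of $F$ to $Y$. Since $(K_W+\Gamma_W) - \varphi_0^{*}(K_X+\Delta) = F$ and this equality is preserved by every step of the MMP, we obtain
$$K_Y+\Gamma_Y \;=\; \varphi^{*}(K_X+\Delta) \;+\; F_Y.$$
The divisor $F_Y$ is $\varphi$-nef (as $K_Y+\Gamma_Y$ is $\varphi$-nef and $\varphi^{*}(K_X+\Delta)$ is $\varphi$-numerically trivial), as well as effective and $\varphi$-exceptional. Applying the negativity lemma to $-F_Y$ then forces $F_Y\leq 0$, so $F_Y=0$. This yields (iii) immediately, and (ii) because each surviving $\varphi$-exceptional prime divisor appears in $\Gamma_Y$ with coefficient $1$, so has discrepancy $-1$; condition (i) is automatic since $\mathbb{Q}$-factoriality is preserved at every step of the MMP.

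The main difficulty in this strategy is termination: the pair $(W,\Gamma_W)$ is only dlt rather than klt, $K_W+\Gamma_W$ need not be pseudo-effective or big over $X$, and the boundary carries components of coefficient exactly one. Any careful execution of the plan has to handle these issues, which is exactly the content of the proofs of Hacon \cite{kk} and Fujino \cite{fujino3} (respectively via a delicate relative MMP with scaling, and via special termination combined with existence of dlt minimal models in characteristic zero).
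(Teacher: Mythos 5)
The paper does not prove this statement; it is quoted verbatim from Hacon \cite[Theorem 3.1]{kk} and Fujino \cite[Theorem 4.1]{fujino3}, and your outline is exactly the strategy of those references: log resolution with full boundary on the exceptional divisors, a relative $(K_W+\Gamma_W)$-MMP over $X$, and the negativity lemma applied to the effective exceptional divisor $F_Y$ to force crepancy. The discrepancy computation, the negativity-lemma step, and the deduction of (i)--(iii) at the end are all correct. The only soft spot is the one you flag yourself: termination. The specific perturbation you suggest, replacing $\Gamma_W$ by $(1-\varepsilon)\Gamma_W+\varepsilon A$ and ``passing to the limit,'' does not work as stated, since the perturbed MMP is a different MMP whose steps need not be $(K_W+\Gamma_W)$-negative, and the coefficient-one structure that (ii) and (iii) rely on is destroyed; the correct fixes are either the MMP with scaling over the birational base $X$ (where every divisor is relatively big, so BCHM applies after a perturbation that keeps the reduced part, as in Hacon's argument) or special termination for dlt flips (Fujino's argument). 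Since you explicitly defer that step to the cited proofs, the proposal is an accurate account of the known argument rather than a complete self-contained one.
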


The following lemma begins the proof of Theorem \ref{0gfrtype-rc}:

\begin{lem}\label{pseudo-effective}Let $X$ be a $\mathbb{Q}$-Gorenstein normal projective variety of dense globally $F$-split type. Then $-K_X$ is pseudo-effective. Moreover, if $X$ is of globally $F$-regular type, $K_X$ is not pseudo-effective.\end{lem}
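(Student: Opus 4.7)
The plan is to spread out to positive characteristic. Fix a model $X_A$ of $X$ over a finitely generated $\mathbb{Z}$-subalgebra $A\subseteq k$, chosen so that $K_{X_A}$ is a $\mathbb{Q}$-Cartier divisor whose restriction to each closed fiber $X_\mu$ is $K_{X_\mu}$. The dense globally $F$-split hypothesis produces a dense set $S$ of closed points $\mu\in\Spec A$ at which $X_\mu$ is globally $F$-split, and for each such $\mu$ Theorem~\ref{ss1} supplies an effective $\mathbb{Q}$-divisor $\Delta_\mu$ on $X_\mu$ with $K_{X_\mu}+\Delta_\mu\sim_{\mathbb{Q}}0$. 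In particular $-K_{X_\mu}\sim_{\mathbb{Q}}\Delta_\mu\geq 0$ for every $\mu\in S$.

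To promote this to pseudo-effectivity of $-K_X$ in characteristic zero, I would apply the Boucksom--Demailly--Paun--Peternell criterion (cf.~\cite{bdpp}) and check that $-K_X\cdot \alpha\geq 0$ for every strongly movable curve class $\alpha$ on $X$. Representing $\alpha$ by a general member of a covering family and spreading that family out to $X_A$ (after further shrinking $A$), the specialization $\alpha_\mu$ is a general member of a covering family on $X_\mu$. Hence $\alpha_\mu$ can be chosen to avoid the support of $\Delta_\mu$, so $\Delta_\mu\cdot\alpha_\mu\geq 0$. Flatness and preservation of intersection numbers under specialization give
\[
-K_X\cdot\alpha\;=\;-K_{X_\mu}\cdot\alpha_\mu\;=\;\Delta_\mu\cdot\alpha_\mu\;\geq\;0,
\]
which proves that $-K_X$ is pseudo-effective.

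For the moreover part, suppose for contradiction that $K_X$ is also pseudo-effective. Combining with the first assertion and applying Lemma~\ref{triviality} yields $K_X\equiv 0$, and after shrinking $A$ this propagates to $K_{X_\mu}\equiv 0$ for $\mu$ in a dense open subset of $\Spec A$. Since $X$ is of globally $F$-regular type, a general such $\mu$ can be chosen so that $X_\mu$ is globally $F$-regular, and Theorem~\ref{ss1} then supplies an effective $\Delta_\mu$ with $-(K_{X_\mu}+\Delta_\mu)$ ample. Writing
\[
-K_{X_\mu}\;=\;\bigl(-(K_{X_\mu}+\Delta_\mu)\bigr)+\Delta_\mu
\]
exhibits $-K_{X_\mu}$ as the sum of an ample divisor and an effective divisor, hence as a big divisor. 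But $K_{X_\mu}\equiv 0$ forces $-K_{X_\mu}\equiv 0$, contradicting bigness.

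The main obstacle is the spreading-out/specialization bookkeeping: one must verify that a strongly movable class on $X$ spreads out to a flat family whose general closed fiber remains strongly movable on $X_\mu$, and that numerical-equivalence classes and intersection numbers transfer correctly between the generic and closed fibers of $X_A\to\Spec A$. Both are standard after further shrinking of $\Spec A$.
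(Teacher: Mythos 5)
Your first assertion is proved exactly as in the paper: both arguments reduce mod $p$, use Theorem~\ref{ss1} to write $-K_{X_\mu}\sim_{\mathbb{Q}}\Delta_\mu\geq 0$, and pair against a movable curve class (the paper runs the contrapositive with a single movable curve $C$ satisfying $K_X\cdot C>0$, you verify $-K_X\cdot\alpha\geq 0$ for all movable $\alpha$ via \cite{bdpp}; these are the same argument). One cosmetic remark: a general member of a covering family need not \emph{avoid} $\Supp\Delta_\mu$; what you need, and what is true, is that it is not \emph{contained} in $\Supp\Delta_\mu$, whence $\Delta_\mu\cdot\alpha_\mu\geq 0$.

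The ``moreover'' part, however, has a genuine gap at the step where you claim that $K_X\equiv 0$ ``propagates to $K_{X_\mu}\equiv 0$ after shrinking $A$'' as part of standard spreading-out bookkeeping. Preservation of intersection numbers under specialization only controls the pairing of $K_{X_\mu}$ against curve classes that come from characteristic zero; the special fiber $X_\mu$ in general carries curves that do not lift, so numerical triviality of $K_X$ does not formally imply numerical triviality of $K_{X_\mu}$. This step can be repaired, but it needs a real input: either the openness of $\Pic^\tau$ in the relative Picard functor (SGA~6) combined with Matsusaka's identification of $\Pic^\tau$ with the numerically trivial classes, or the route the paper actually takes. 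The paper sidesteps the issue by first upgrading $K_X\equiv 0$ to $K_X\sim_{\mathbb{Q}}0$ in characteristic zero, using that $X$ is klt (Proposition~\ref{just singularities}) together with the abundance-type theorem for numerical dimension zero (\cite[V, 4.9 Corollary]{N}, \cite{ambro}, \cite{kawamata-abunnuzero}, \cite{ckp-num}); $\mathbb{Q}$-linear triviality manifestly specializes, and this contradicts $K_{X_\mu}\not\sim_{\mathbb{Q}}0$, which follows from the bigness of $-K_{X_\mu}$ exactly as in your final display. So your endgame is fine once the propagation step is replaced by one of these two arguments, but as written the crucial specialization claim is unjustified.
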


\begin{proof}Assume that $-K_X$ is not pseudo-effective in order to obtain a contradiction. Then there exists a movable curve $C$ such that $K_X.C>0$. Taking a model $$\mathcal{X} \to \mathrm{Spec}\, A$$ for reduction to positive characteristic, there exists a dense  subset of closed points $S \subseteq \mathrm{Spec}\, A$ such that $X_{\mu}$ is globally $F$-split and $C_{\mu}$ is also a movable curve of $X_{\mu}$ for any closed point $\mu \in S$.  From Theorem \ref{ss1}, there exists an effective $\mathbb{Q}$-divisor $\Delta_{\mu}$ on $X_{\mu}$ such that $(X_{\mu},\Delta_{\mu})$ is lc and $K_{X_{\mu}}+\Delta_{\mu} \sim_{\mathbb{Q}} 0$.  We see also $$K_X \cdot C=K_{X_{\mu}} \cdot C_{\mu}= -\Delta_{\mu} \cdot C_{\mu} \leq 0$$ since $C_{\mu}$ is movable.  This is a contradiction to the assumption and shows that $-K_X$ is pseudo effective.

For the latter statement, it holds that $-K_{X_{\mu}}$ is big by Theorem \ref{ss1}.  Thus we see that $K_{X_\mu} \not \sim_{\mathbb{Q}}0$. If $K_X$ is pseudo-effective, then $K_X \equiv 0$ by Lemma \ref{triviality},
moreover, $K_X \sim_{\mathbb{Q}} 0$ from \cite[V, 4.9 Corollary]{N}, \cite[Theorem 4.2]{ambro}, \cite{kawamata-abunnuzero}, or \cite{ckp-num} since $X$ has only Kawamata log terminal singularities by  Proposition \ref{just singularities}.  Therefore, $K_{X_{\mu}}$ is also $\mathbb{Q}$-linearly trivial. This is a contradiction. Thus $K_X$  is not pseudo-effective. \end{proof}

\begin{cor}\label{uniruled} Let $X$ be a $\mathbb{Q}$-factorial normal projective variety of dense globally $F$-split type. Then $X$ is uniruled or  a canonical variety with $K_X \sim_{\mathbb{Q}}0$. \end{cor}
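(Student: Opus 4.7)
The plan is to prove the dichotomy by contradiction: assume that $X$ is not uniruled, and deduce that $K_X \sim_{\mathbb{Q}} 0$ together with the desired singularity bound. In characteristic zero, a smooth projective variety is uniruled if and only if its canonical divisor is not pseudo-effective (Miyaoka--Mori, or Boucksom--Demailly--Paun--Peternell). Since uniruledness is a birational invariant, applying this criterion to a resolution $\mu : \widetilde{X} \to X$ and using that $\mu_{*} K_{\widetilde X} = K_X$ (for $\mathbb{Q}$-Gorenstein $X$) shows that non-uniruledness of $X$ forces $K_X$ itself to be pseudo-effective.

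Combined with Lemma \ref{pseudo-effective}, which gives that $-K_X$ is pseudo-effective, Lemma \ref{triviality} immediately yields $K_X \equiv 0$. Since $X$ is $\mathbb{Q}$-Gorenstein by the $\mathbb{Q}$-factoriality assumption and of dense globally $F$-split type, Proposition \ref{just singularities}(1) shows that $X$ is log canonical. Applying abundance for log canonical pairs with numerically trivial canonical class---the same references invoked in the ``moreover'' part of the proof of Lemma \ref{pseudo-effective}---upgrades this to $K_X \sim_{\mathbb{Q}} 0$.

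It remains to improve log canonical to canonical. I would take a dlt blow-up $\varphi : Y \to X$ as in Theorem \ref{dltblowup}; then $K_Y + \Gamma = \varphi^{*} K_X \sim_{\mathbb{Q}} 0$, where $\Gamma = \sum E_{i}$ collects the $\varphi$-exceptional prime divisors with coefficient one, so $-K_Y \sim_{\mathbb{Q}} \Gamma \geq 0$. Since $X$ is not uniruled, $Y$ is not uniruled either, hence $K_Y$ is pseudo-effective, and so is $-\Gamma$. Lemma \ref{triviality} together with the effectivity of $\Gamma$ then forces $\Gamma \equiv 0$, and the negativity lemma applied to the $\varphi$-exceptional effective divisor $\Gamma$ yields $\Gamma = 0$. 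Consequently $\varphi$ is a small birational morphism between $\mathbb{Q}$-factorial varieties, and the $\mathbb{Q}$-factorial argument used in the claim in Step \ref{reduction-mfs} of the proof of Proposition \ref{prop:RCC-Freg_1} makes $\varphi$ an isomorphism. Hence $X = Y$ is itself dlt with zero boundary, i.e.\ klt.

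The main obstacle I expect is the remaining leap from klt to canonical, which does not follow formally from $K_X \sim_{\mathbb{Q}} 0$ alone (e.g.\ a projective version of the quotient $\mathbb{A}^{2}/\mathbb{Z}_{n}$ with scalar action for $n \geq 3$ is klt with trivial canonical class but not canonical). I anticipate that one closes this gap by using the effective divisor $\Delta_{\mu}$ with $K_{X_{\mu}} + \Delta_{\mu} \sim_{\mathbb{Q}} 0$ produced by Theorem \ref{ss1} at a dense set of primes $\mu$, and transferring back to characteristic zero via the reduction-mod-$p$ framework of this section; alternatively, the authors may be using ``canonical'' here in the looser sense of the klt conclusion that the argument above already delivers.
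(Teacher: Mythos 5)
Your argument up to the klt statement is sound and essentially parallel to the paper's: the paper also takes a dlt blow-up $f:Y\to X$, shows $K_Y$ is pseudo-effective (it argues via \cite[Corollary 1.3.3]{bchm} plus rational connectedness of klt Fano fibres rather than via BDPP on a resolution, but the conclusion is the same), combines this with Lemma \ref{pseudo-effective} and Lemma \ref{triviality} to get $K_Y\equiv 0$ and $\Gamma=0$, and concludes that $X$ is klt and $f$ is small. (One small ordering issue: you invoke abundance for \emph{lc} pairs with $K\equiv 0$ before establishing klt; the references the paper cites are used only after canonicity is known, so it is cleaner to postpone the $K_X\sim_{\mathbb{Q}}0$ step to the end as the paper does.)

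The genuine gap is the last step, and your two proposed escapes are both wrong: the paper really does prove ``canonical,'' and it does so without returning to characteristic $p$. The missing idea is to repeat the dlt blow-up argument with a \emph{terminal model}: by \cite[Corollary 1.4.3]{bchm} there is a birational morphism $g:Y'\to X$ with $K_{Y'}+\Gamma'=g^{*}K_X$, $\Gamma'\geq 0$ and $(Y',\Gamma')$ $\mathbb{Q}$-factorial terminal. Since $K_X\equiv 0$ (already established), $-K_{Y'}\equiv\Gamma'\geq 0$ is pseudo-effective, while $K_{Y'}$ is pseudo-effective because $Y'$ is birational to the non-uniruled $X$; Lemma \ref{triviality} gives $K_{Y'}\equiv 0$, hence $\Gamma'\equiv 0$, hence $\Gamma'=0$. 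Then $K_{Y'}=g^{*}K_X$ with $Y'$ terminal, so every exceptional divisor over $X$ has nonnegative discrepancy, i.e.\ $X$ is canonical. Note that this disposes of your worry about $\mathbb{A}^{2}/\mathbb{Z}_{n}$-type examples: a klt projective variety with $K\equiv 0$ that is \emph{not uniruled} is automatically canonical by exactly this argument, and the klt-but-not-canonical examples with trivial canonical class (e.g.\ $\mathbb{Z}_{3}$-quotients of abelian surfaces with isolated fixed points) are all uniruled, so they never arise under the standing hypothesis. No reduction mod $p$ and no reinterpretation of ``canonical'' is needed.
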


\begin{proof}Assume $X$ is not uniruled. First we reduce to the case that $X$ has only Kawamata log terminal singularities.    Indeed Proposition \ref{just singularities} implies that $X$ has only log canonical  and  take a dlt blow-up $f: Y \to X$ such that
$$f^*K_X=K_Y+\Gamma$$
 and $(Y, \Gamma)$ is a $\mathbb{Q}$-factorial divisorial log terminal pair as in Theorem \ref{dltblowup}.  If $K_Y$ is not pseudo-effective, $Y$ is birationally equivalent to a Mori fiber space by  \cite[Corollary 1.3.3]{bchm}, in particular $X$ is uniruled by \cite[IV.1.3 Proposition]{Kollar2} since klt Fano varieties are rationally connected by   \cite[Corollary 1.3]{HM} and \cite[Theorem 1]{Zhang}.  Thus we may assume that $K_Y$ is pseudo-effective.

 Lemma \ref{triviality} implies  $K_Y \equiv 0$ since $-K_Y$ is pseudo-effective by Lemma \ref{pseudo-effective}. Thus it holds that  $\Gamma =0$ since  $-(K_Y+\Gamma)$ is pseudo-effective by Lemma \ref{pseudo-effective}. Thus $X$ has only Kawamata log terminal singularities and $f$ is small by the condition (iii) in Theorem \ref{dltblowup} and \cite[IV, Proposition 2.41]{KM}. Next take a birational morphism $g: Y' \to X$ such that
$$g^*K_X=K_{Y'}+\Gamma'$$
 and $(Y, \Gamma)$ is a $\mathbb{Q}$-factorial  terminal pair by \cite[Corollary 1.4.3]{bchm}.  By the same argument as above, we see that $\Gamma'=0$, thus  $X$ has only canonical singularities with $K_X \equiv 0$. Thus $K_X \sim_{\mathbb{Q}} 0$ by \cite[Theorem 8.2]{kawamata-iitaka}, \cite[V, 4.9 Corollary]{N}, \cite[Theorem 4.2]{ambro}, \cite{kawamata-abunnuzero}, or \cite{ckp-num}.  This concludes the proof of Corollary \ref{uniruled}.\end{proof}

 We come to the proof of the main result of the section, Theorem \ref{0gfrtype-rc}.

\begin{thm}\label{rat-con}Let $X$ be a $\mathbb{Q}$-Gorenstein  normal projective variety of globally $F$-regular type. Then $X$ is rationally connected. \end{thm}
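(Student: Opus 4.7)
My approach is to induct on $n = \dim X$. The plan is to run a characteristic-zero MMP on $X$ to produce a Mori fiber space $\overline{X} \to Y$, apply the inductive hypothesis to the base $Y$, use rational connectedness of klt Fano general fibers via \cite{HM, Zhang}, and combine via \cite{ghs} to conclude rational connectedness of $\overline{X}$ and hence of $X$. The base case $n = 0$ is trivial, and for $n = 1$, Lemma~\ref{pseudo-effective} gives $K_X$ not pseudo-effective, forcing $X \cong \mathbb{P}^1$.

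For the inductive step with $n \geq 2$, I would first reduce to the $\mathbb{Q}$-factorial case. Since $X$ is klt by Proposition~\ref{just singularities}(1), \cite{bchm} provides a small $\mathbb{Q}$-factorialization $\pi : X' \to X$. Spreading $\pi$ out over a finitely generated $\mathbb{Z}$-subalgebra $A \subset k$ and applying Lemma~\ref{glFness for MMP} on a dense open set of mod-$p$ fibers — small birational maps preserve global $F$-regularity — one sees that $X'$ remains of globally $F$-regular type, and rational connectedness of $X$ is equivalent to that of $X'$ by birational invariance of RC for klt varieties. By Lemma~\ref{pseudo-effective}, $K_{X'}$ is not pseudo-effective, so \cite{bchm} produces a $K_{X'}$-MMP terminating in a Mori fiber space $g : \overline{X} \to Y$ with $\dim Y < \dim X$. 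Each intermediate step is either a flip or a divisorial contraction, both of which preserve global $F$-regularity at the mod-$p$ level by Lemma~\ref{glFness for MMP}. Spreading out the entire MMP diagram and then invoking Lemma~\ref{glFness for MMP} once more for the Mori contraction $g$ itself shows that $\overline{X}$ and the $\mathbb{Q}$-factorial klt (hence $\mathbb{Q}$-Gorenstein) base $Y$ are both of globally $F$-regular type.

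By induction, $Y$ is rationally connected, and the general fiber of $g$ is klt Fano (as $-K_{\overline{X}}$ is $g$-ample), hence rationally connected by \cite{HM, Zhang}. Passing to resolutions of $\overline{X}$ and $Y$ in characteristic zero and applying \cite{ghs} to the resulting smooth fibration yields rational connectedness of $\overline{X}$; birational invariance of RC for klt varieties then gives the same for $X$. The principal obstacle is the spreading-out step: one must verify that each flip, divisorial contraction, and the final Mori contraction in characteristic zero specializes correctly to its mod-$p$ analogue on a common dense open subset of $\Spec A$, so that the characteristic-$p$ preservation of global $F$-regularity in Lemma~\ref{glFness for MMP} genuinely applies at every stage. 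This bookkeeping is standard but somewhat technical, and is the only delicate step beyond invoking the cited results.
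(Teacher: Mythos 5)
Your proposal follows the paper's proof essentially step for step: induction on dimension, a small $\mathbb{Q}$-factorialization via \cite{bchm}, Lemma~\ref{pseudo-effective} to see that $K_X$ is not pseudo-effective, a characteristic-zero MMP terminating in a Mori fiber space whose total space and base remain of globally $F$-regular type by (the mod-$p$ reductions of) Lemma~\ref{glFness for MMP}, and then the inductive hypothesis, \cite{HM}, \cite{Zhang} and \cite{ghs} to conclude. The only harmless deviation is at the end, where you pass to resolutions before applying \cite{ghs}, whereas the paper applies it directly to the Mori fibration and upgrades rational chain connectedness to rational connectedness using \cite[Corollary 1.5]{HM}.
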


\begin{proof} We proceed by induction on dimension. Note that $X$ has only Kawamata log terminal singularities by Proposition \ref{just singularities}. Taking a small $\mathbb{Q}$-factorization (\cite[Corollary 1.4.3]{bchm}), we may assume that $X$ is $\mathbb{Q}$-factorial.

We know $K_X$ is not pseudo-effective from Lemma \ref{pseudo-effective}. Note that $X$ has log terminal singularities from Theorem \ref{just singularities} (2).  By \cite[Corollary 1.3.3]{bchm}, there exists a minimal model program which terminates to a Mori fiber space $$f:X' \to Y.$$ In particular, we know also $X'$ and $Y$ are of globally $F$-regular type (cf. Lemma \ref{glFness for MMP}). We see that $Y$ is also $\mathbb{Q}$-factorial by \cite[Corollary 3.18]{KM}. By the induction hypothesis, $Y$ is rationally connected. Thus applying \cite[Theorem 1.1]{ghs} for $f$ and by the rational connectedness of a general fiber of  $f$ (cf.  \cite[Corollary 1.3]{HM} and \cite[Theorem 1]{Zhang}), we see that $X$ is also rationally chain connected. For a klt projective variety, rational chain connectedness is rational connectedness by \cite[Corollary 1.5]{HM}. Thus $X$ is also rationally connected. \end{proof}

\end{document}